\tikzset{
  on each segment/.style={
    decorate,
    decoration={
      show path construction,
      moveto code={},
      lineto code={
        \path [#1]
        (\tikzinputsegmentfirst) -- (\tikzinputsegmentlast);
      },
      curveto code={
        \path [#1] (\tikzinputsegmentfirst)
        .. controls
        (\tikzinputsegmentsupporta) and (\tikzinputsegmentsupportb)
        ..
        (\tikzinputsegmentlast);
      },
      closepath code={
        \path [#1]
        (\tikzinputsegmentfirst) -- (\tikzinputsegmentlast);
      },
    },
  },
  mid arrow/.style={postaction={decorate,decoration={
        markings,
        mark=at position .5 with {\arrow[#1]{stealth}}
      }}},
}
\newcommand{\G}{\Gamma}
\def\nnu{{\boldsymbol\nu}}
\newcommand{\Mat}{\mbox{\rm Mat}}
\def\O{{\boldsymbol O}}
\def\0{{\boldsymbol 0}}
\def\CC{{\mathbb C}}
\def\T{{\cal T}}
\def\E{{\cal E}}
\def\RR{{\mathbb R}}
\def\NN{{\mathbb N}}
\def\ZZ{{\mathbb Z}}
\def\A{{\mathcal A}}
\def\C{{\mathcal C}}
\def\R{{\mathcal R}}
\def\P{{\mathcal P}}
\def\B{{\mathcal B}}
\def\M{{\mathcal M}}
\def\D{{\mathcal D}}
\def\ol{\overline}
\def\ds{\displaystyle}
\def\Ra{\Rightarrow}
\def\la{\leftarrow}
\def\La{\Leftarrow}
\def\ra{\rightarrow}
\def\XXi{{\mathfrak X}}
\DeclareMathOperator{\Span}{span}
\DeclareMathOperator{\dgr}{dgr}
\DeclareMathOperator{\trace}{trace}
\DeclareMathOperator{\proj}{proj}
\DeclareMathOperator{\spec}{spec}
\DeclareMathOperator{\ii}{\boldsymbol{i}}
\DeclareMathOperator{\jj}{\boldsymbol{j}}
\DeclareMathOperator{\hh}{\boldsymbol{h}}
\DeclareMathOperator{\oo}{\boldsymbol{0}}
\DeclareMathOperator{\eig}{eig}
\newtheorem{theorem}{Theorem}[section]
\newtheorem{lemma}[theorem]{Lemma}
\newtheorem{corollary}[theorem]{Corollary}
\newtheorem{proposition}[theorem]{Proposition}
\newtheorem{definition}[theorem]{Definition}
\newtheorem{notation}[theorem]{Notation}
\newtheorem{remark}[theorem]{Remark}
\newtheorem{researchProblem}[theorem]{Research problem}
\newtheorem{problem}{Problem}[section]
\definecolor{ForestGreen}{RGB}{12, 110, 46}
\definecolor{ForestGreenTwo}{RGB}{120, 110, 86}
\newenvironment{proof}{{\noindent\it Proof. }}{\nopagebreak\hspace*{0.5cm}\hfill$\hbox{\rule{3pt}{6pt}}$\smallskip}
\definecolor{ForestGreen}{RGB}{12, 110, 46}
\definecolor{ForestGreenTwo}{RGB}{120, 110, 86}
\newfont\fiverm{cmr5}
\def\eeq{\end{equation}} 
\def\lbeq#1{\begin{equation} \label{#1}} 
\title{On combinatorial structure and algebraic characterizations of distance-regular digraphs}
\author{
{Giusy Monzillo}\\
{\small Faculty of Mathematics, Natural Sciences}\\
{\small and Information Technologies}\\
{\small University of Primorska}\\
{\small Muzejski trg 2, 6000 Koper, Slovenia }\\
{\small Giusy.Monzillo@famnit.upr.si} \and
{Safet Penji\'c}\\
{\small Faculty of Mathematics, Natural Sciences}\\
{\small and Information Technologies; and}\\
{\small Andrej Maru\v{s}i\v{c} Institute}\\
{\small University of Primorska}\\
{\small Muzejski trg 2, 6000 Koper, Slovenia }\\
{\small Safet.Penjic@iam.upr.si}
}
\begin{document}
\setstcolor{red}
\maketitle

\begin{abstract}
Let $\G=\G(A)$ denote a simple strongly connected digraph with vertex set $X$, diameter $D$, and let $\{A_0,A:=A_1,A_2,\ldots,A_D\}$ denote the set of distance-$i$ matrices of $\G$. Let $\{R_i\}_{i=0}^D$ denotes a partition of $X\times X$, where $R_i=\{(x,y)\in X\times X\mid (A_i)_{xy}=1\}$ $(0\le i\le D)$. The digraph $\G$ is distance-regular if and only if $(X,\{R_i\}_{i=0}^D)$ is a commutative association scheme. In this paper, we describe the combinatorial structure of $\G$ in the sense of equitable partition, and from it we derive several new algebraic characterizations of such a graph, including the spectral excess theorem for distance-regular digraph. Along the way, we also rediscover all well-known algebraic characterizations of such graphs.
\end{abstract}


\smallskip
{\small
\noindent
{\it{MSC:}} 05E30, 05C75, 05C50, 05C12, 05C20



\smallskip
\noindent 
{\it{Keywords:}} Commutative association schemes, Bose-Mesner algebra, equitable partition. 
}


\section{Introduction}
\label{1A}

Let $\G$ denote strongly connected digraph with diameter $D$ and adjacency matrix $A$. In this paper, we study connections between commutative association schemes and digraphs, by considering the following question: under which combinatorial restriction in the sense of equitable partitions on a digraph $\G=\G(A)$, the set $\{A^0,A^1,\ldots,A^D\}$ 
is a basis of the Bose--Mesner algebra of a commutative association scheme? Formal definitions are given in Section~\ref{2B}.

We first give the relevant background before presenting our main problem of research. Let $X$ denote a finite set, and $\Mat_X(\CC)$ the set of complex matrices with rows and columns indexed by $X$. Let $\R=\{R_0,R_1,\ldots,R_d\}$ denote a set of cardinality $d+1$ of nonempty subsets of $X\times X$. The elements of the set $\R$ are called {\em relations} (or {\em classes}) on $X$. For each integer $i$ $(0\le i\le d)$, let $B_i\in\Mat_X(\CC)$ denote the adjacency matrix of the graph $(X,R_i)$ (directed, in general). The pair ${\XXi}=(X,\R)$ is a {\em commutative $d$-class association scheme} if the {\em relation matrices} $B_i$'s satisfy the following properties
\begin{enumerate}[leftmargin=1.7cm,label=\rm(AS\arabic*)]
\item $B_0=I$, the identity matrix.\label{ce}
\item $\ds{\sum_{i=0}^d B_i=J}$, the all-ones matrix.\label{cg}
\item ${B_i}^\top\in\{B_0,B_1,\ldots,B_d\}$ for $0\le i\le d$.
\item $B_iB_j$ is a linear combination of $B_0,B_1,\ldots,B_d$ for $0\le i,j\le d$ (i.e., for every $i,j$ $(0\le i,j\le d)$ there exist positive integers $p^h_{ij}$ $(0\le h\le d)$, known as {\em intersection numbers}, such that $B_iB_j=\sum_{h=0}^d p^h_{ij} B_h$).\label{cb}
\item $B_iB_j=B_jB_i$ for every $i,j$ $(0\le i,j\le d)$ (i.e., for the intersection numbers $p^h_{ij}$, $0\le i,j,h\le d$, from \ref{cb} we have that $p^h_{ij}=p^h_{ji}$).\label{cf}
\end{enumerate}

By \ref{ce}--\ref{cf} the vector space $\M=\Span\{B_0,B_1,\ldots,B_d\}$ is a commutative algebra; it is known as the {\em Bose--Mesner algebra} of $\XXi$.  We say that a matrix $B$ {\em generates} $\M$ if every element in $\M$ can be written as a polynomial in $B$. We say that a (di)graph $\G=\G(A)$ {\em generates} a commutative association scheme $\XXi$ if and only if the adjacency matrix $A$ of $\G$ generates the Bose--Mesner algebra $\M$ of $\XXi$, and in symbols we write $\M=(\langle A\rangle,+,\cdot)$. We say that $\XXi$ is {\em symmetric} if the $B_i$'s are symmetric matrices.

It is well known that an undirected graph $\G$ of diameter $D$ is a distance-regular graph if and only if $\G$ generates a symmetric $D$-class association scheme (see, for example, \cite{BCN,FaD,DKT,SPm}). Moreover, an undirected graph $\G$ with $d+1$ distinct eigenvalues is a quotient-polynomial graph if and only if $\G$ generates a symmetric $d$-class association scheme (see \cite{FQpG,FMPS}). Some combinatorial and algebraic properties of a digraph $\G$ with $d+1$ distinct eigenvalues which generates a non-symmetric $d$-class association scheme are described in \cite{MPc}. In this paper, we are interested in the case when a digraph $\G$ with diameter $D$ generates a commutative $D$-class association scheme. This problem can be set up in a completely different way, as follows:

\begin{problem}{\label{Aa}}{\rm
Let $\G$ denote a simple strongly connected digraph with diameter $D$, and let $\{A_0,A:=A_1,\ldots,A_D\}$ denote the distance-$i$ matrices of $\G$. Consider the following two properties of $\G$.
\begin{enumerate}[label=\rm(\roman*)]
\item $A^\top\in\{A_0,A_1,\ldots,A_D\}$.
\item There exist complex scalars $p^h_{i1}$ $(0\le i,h\le D)$ such that 
$$
A_iA=\sum_{h=0}^D p^h_{i1} A_h
\qquad(0\le i\le D).
$$
\end{enumerate}
Using the algebraic properties (i) and (ii) from above, can we describe the combinatorial structure of $\G$ in the sense of equitable partitions? The vice-versa question is also of importance, i.e., what combinatorial structure does a digraph need to have so that the property (i) and (ii) from the above hold?
}\end{problem}

Note similarity and difference between properties (i), (ii) of Problem~\ref{Aa} and (AS3), (AS4) from definition of $D$-class association schemes. In this paper, we show that these two properties hold if and only if $\G$ generates a commutative $D$-class association scheme (see Section~\ref{eA}).

Let $\G=\G(A)$ denote a simple strongly connected digraph with vertex set $X$, diameter $D$, and let $\{A_0,A:=A_1,A_2,\ldots,A_D\}$ denote the distance-$i$ matrices of $\G$. Pick $x\in X$, and let $\G^\ra_i(x):=\{z\in X\mid (A_i)_{xz}=1\}$ and $\G^\la_i(x):=\{z\in X\mid (A_i)_{zx}=1\}$ $(0\le i\le D)$. We denote by $\partial(x,y)$ the distance from vertex $x$ to vertex $y$. Since we are considering oriented graphs, notice that $\partial(x,y)$ and $\partial(y,x)$ may not be equal. In this paper, we define distance-regular digraphs in the following way: A digraph $\G$ is {\em distance-regular} if for every vertex $x\in X$ the distance-$i$ partition 
$$
\{\G^\ra_0(x),\G^\ra_1(x),\ldots,\G^\ra_D(x)\}
$$
of $X$ is equitable and its corresponding parameters do not depend on the choice of the vertex $x$. The algebra $\A$ generated by the adjacency matrix $A$ is called the {\em adjacency algebra} of $\G$. Our main result is Theorem~\ref{Ab}.

\begin{theorem}
\label{Ab}
Let $\G=\G(A)$ denote a simple strongly connected digraph with vertex set $X$, diameter $D$, adjacency algebra $\A$, and let $\{A_0,A:=A_1,A_2,\ldots,A_D\}$ denote the distance-$i$ matrices of $\G$. Then, the following are equivalent.
\begin{enumerate}[label=\rm(\roman*)]
\item $\G$ is a distance-regular digraph (i.e., for every vertex $x\in X$, the distance-$i$ partition $\{\G^\ra_0(x),\G^\ra_1(x),\ldots,\G^\ra_D(x)\}$ of $X$ is equitable and its corresponding parameters do not depend on the choice of $x$).\label{Xh}
\item For every vertex $x\in X$, the partition $\{\G^\la_0(x),\G^\la_1(x),\ldots,\G^\la_D(x)\}$ of $X$ is equitable and its corresponding parameters do not depend on the choice of $x$.
\item $(X,\{R_i\}_{i=0}^D)$ is a commutative association scheme, where $\{R_i\}_{i=0}^D$ denotes a partition of $X\times X$ with $R_i=\{(x,y)\in X\times X\mid (A_i)_{xy}=1\}$ $(0\le i\le D)$. \label{Xe}
\item $\A$ is the Bose--Mesner algebra of a commutative $D$-class association scheme.\label{Xm}
\item $A^\top\in\{A_0,A_1,\ldots,A_D\}$, and there exist complex scalars $p^h_{i1}$ $(0\le i,h\le D)$ such that 
$$
A_iA=\sum_{h=0}^D p^h_{i1} A_h
\qquad(0\le i\le D).
$$
\item $(X,\{R_{\ii}\}_{\ii\in\Delta})$ is a commutative $D$-class association scheme, where $\Delta=\{(\partial(x,y),\partial(y,x))\mid x,y\in X\}$, and for any $\ii\in\Delta$, $R_{\ii}$ denote the set of ordered pairs $(x,y)\in X\times X$ such that $(\partial(x,y),\partial(y,x))=\ii$.\label{Xi}
\item $\G$ is a regular digraph with $d + 1$ distinct eigenvalues, has spectrally maximum diameter $D = d$, and the matrices $A^\top$ and $A_D$ are polynomials in $A$.
\item $\G$ is a regular digraph with $d + 1$ distinct eigenvalues, has spectrally maximum diameter $D = d$, $A^\top$ is a polynomial in $A$, and the following equality holds 
$$
\frac{1}{|X|} \sum_{x\in X} \G^{\ra}_D(x)=|X|\left(\sum_{j=0}^d \frac{|\pi_0|^2}{m(\lambda_j)|\pi_j|^2}\right)^{-1},
$$
where $\spec(\G)=\{[\lambda_0]^{m(\lambda_0)},\ldots,[\lambda_d]^{m(\lambda_d)}\}$ is the spectrum of $\G$ and $\pi_i=\prod_{j=0\,(j\not=i)}^d(\lambda_i-\lambda_j)$ $(0\le i\le d)$ (this is a {\bf spectral excess theorem} for distance-regular digraphs).
\label{Xn}
\end{enumerate}
\end{theorem}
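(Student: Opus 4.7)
The plan is to prove the equivalences in three linked blocks: an algebraic core \ref{Xe}--\ref{Xm}--(v), a combinatorial block joining \ref{Xh}, (ii), \ref{Xi} to it, and a spectral block joining (vii), \ref{Xn} to \ref{Xm}. A crucial observation used throughout is that the distance matrices $A_0,\ldots,A_D$ are linearly independent by disjoint supports, so once all of them are shown to lie in $\A$, a dimension count forces $\{A_0,\ldots,A_D\}$ to be a basis of $\A$ with $\dim\A=D+1$.

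For the algebraic core, I would prove \ref{Xe} $\Rightarrow$ \ref{Xm} $\Rightarrow$ (v) $\Rightarrow$ \ref{Xe}. The implication \ref{Xe} $\Rightarrow$ \ref{Xm} is obtained by solving the recursion
\[ A_{i+1} = (p^{i+1}_{i1})^{-1}\Bigl(A A_i - \sum_{h\neq i+1} p^h_{i1} A_h\Bigr), \]
where $p^{i+1}_{i1}\neq 0$ because any shortest walk of length $i+1$ passes through some vertex at distance $i$; this yields $A_i=p_i(A)$ for a polynomial of degree $i$, so $\M=\A$. The step \ref{Xm} $\Rightarrow$ (v) is immediate from \ref{cb} and \ref{cf}. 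For (v) $\Rightarrow$ \ref{Xe}, the same recursion using only the $j=1$ hypothesis produces $A_i=p_i(A)$ for every $i$; then every product $A_iA_j$ lies in $\A=\Span\{A_0,\ldots,A_D\}$, yielding the general \ref{cb}, and commutativity is automatic because $\A$ is generated by a single matrix.

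For the combinatorial block, \ref{Xe} $\Rightarrow$ \ref{Xh} is direct because the intersection numbers supply the uniform out-partition parameters. For \ref{Xh} $\Rightarrow$ \ref{Xe}, equitability with uniform parameters gives that $(A_j A^\top)_{xy}$ depends only on $\partial(x,y)$; together with the strong connectivity of $\G$ this forces $\partial(y,x)$ to depend only on $\partial(x,y)=i$, so $A^\top=A_{i^*}$ for a permutation $i\mapsto i^*$, establishing the \ref{cf}-type hypothesis, while the $j=1$ case of \ref{cb} is a rewriting of the uniform parameters. The equivalence \ref{Xh} $\Leftrightarrow$ (ii) follows from the identity $\G^\la_i(x)=\G^\ra_{i^*}(x)$ forced by the existence of $i^*$, and \ref{Xe} $\Leftrightarrow$ \ref{Xi} is a pure relabelling via $i\mapsto(i,i^*)$: condition \ref{Xi} asks that $|\Delta|=D+1$, which is equivalent to $\partial(x,y)$ determining $\partial(y,x)$.

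For the spectral block, $d+1$ distinct eigenvalues gives $\dim\A=d+1$, and $D=d$ forces $\{I,A,\ldots,A^D\}$ to be a basis of $\A$. Under (vii), the additional hypotheses $A^\top,A_D\in\A$ make $\A$ a commutative $*$-subalgebra with a distance-$D$ anchor; a descending induction, multiplying $A_D$ by $A^\top$ (or by $A$) and subtracting contributions already known to be polynomials in $A$, places every $A_i$ in $\A$, yielding \ref{Xm}. The converse is immediate by dimensions. For (vii) $\Leftrightarrow$ \ref{Xn}, use the spectral decomposition $A=\sum_{j=0}^d \lambda_j E_j$ and Lagrange interpolation at the eigenvalues to expand $A_D=\sum_j p_D(\lambda_j)E_j$, evaluate $|X|^{-1}\sum_x|\G^\ra_D(x)|=|X|^{-1}\trace(J A_D)$ explicitly, and match with the stated expression using $\pi_j=\prod_{i\neq j}(\lambda_j-\lambda_i)$; the converse direction adapts the Fiol--Garriga norm-minimisation argument on $\A$ with the Hermitian inner product $\langle M,N\rangle=|X|^{-1}\trace(MN^*)$.

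The main obstacle will be (vii) $\Leftrightarrow$ \ref{Xn}, the digraph spectral excess theorem. In the classical undirected setting, the Bose--Mesner algebra is a $*$-algebra with self-adjoint primitive idempotents whose squared norms equal the eigenvalue multiplicities; for digraphs, $A$ is not self-adjoint, so the primitive idempotents of $\A$ need not be orthogonal projectors, and the correct replacement for the idempotent norm is $|\pi_0|^2/(m(\lambda_j)|\pi_j|^2)$, which tracks left and right eigenvectors simultaneously. The hypothesis $A^\top\in\A$ is precisely what closes $\A$ under adjoint and makes this replacement meaningful; carefully verifying the orthogonality required for the norm-minimisation step is the delicate technical point.
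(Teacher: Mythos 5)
Your overall architecture is sound and in places genuinely different from the paper's: the paper proves everything hub-and-spoke through the combinatorial definition (i) (Characterizations A--J in Sections~\ref{Ga}--\ref{qa}), whereas you build an algebraic core (iii)$\Leftrightarrow$(iv)$\Leftrightarrow$(v) and attach the other conditions to it. Your route from (i) to the transpose condition --- iterating $A_jA^\top=\sum_i d^\ra_{ij}A_i$ to show that the number of $m$-walks from $y$ to $x$ depends only on $\partial(x,y)$, hence so does $\partial(y,x)$ --- is a legitimate and arguably slicker alternative to the paper's combinatorial induction (Lemma~\ref{dE} and Proposition~\ref{dF}), but you must actually display the iteration: a single application of equitability only shows that $A^\top$ is a \emph{sum} of distance matrices, not a single one. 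Similarly, in (v)$\Rightarrow$(iii) you verify (AS4) and (AS5) but never (AS3) for $i\ge 2$; this is fillable ($A_i^\top=p_i(A^\top)=p_i(A_h)$ is a $01$-matrix in $\Span\{A_0,\dots,A_D\}$, cf.\ Proposition~\ref{eL}(ii)) but it is not automatic and must be said.

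The genuine gap is (vii)$\Rightarrow$(iv). Your ``descending induction, multiplying $A_D$ by $A^\top$ (or by $A$) and subtracting contributions already known'' does not work as stated. The product $A^\top A_D$ is indeed supported on pairs at distance $D-1$ or $D$ (if $\partial(z,x)=1$ and $\partial(z,y)=D$ then $\partial(x,y)\ge D-1$), and it lies in $\A$; but a matrix of $\A$ supported on $\{(x,y)\mid\partial(x,y)\in\{D-1,D\}\}$ need not be a combination $cA_{D-1}+c'A_D$ --- its entries on distance-$(D-1)$ pairs have no reason to be constant, and that constancy is exactly what you would need to extract $A_{D-1}$. (Multiplying by $A$ instead is worse: for a digraph, $\partial(z,y)=1$ gives no lower bound on $\partial(x,y)$, so $A_DA$ is not even supported near distance $D$.) This step is precisely where the paper invokes the predistance-polynomial machinery (Definition~\ref{of}, Propositions~\ref{ol} and \ref{ok}, imported from \cite{MoP}): one first shows that any polynomial $q$ with $q(A)=A_D$ must equal $p_D$, and then that $A^\top\in\A$ together with $A_D=p_D(A)$ forces $A_i=p_i(A)$ for all $i$, via orthogonality with respect to the inner product \eqref{oe} --- the same norm-minimisation argument you correctly reserve for (viii). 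Without that (or an equivalent substitute) the spectral block of your proof does not close.
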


The concept of a distance-regular digraph was introduce by {\sc Damerell} \cite{Drm} in the late $1970$s. A digraph $\G$ 
is distance-regular in sense of {\sc Damerell} \cite{Drm} if the following property holds:
\begin{enumerate}[left=1cm]
\item[({\sc Dam})] There exist numbers $b_{ij}$ $(0\le i,j\le D)$ such that $|\G^\ra_1(y)\cap\G_j^\ra(x)|=b_{ij}$, for all $x\in X$, $y\in\G^\ra_i(x)$ $(0\le i,j\le D)$.\label{Xd}
\end{enumerate}
In \cite[Theorem~3]{Drm} {\sc Damerell} proved that ({\sc Dam}) yields property \ref{Xe} of Theorem~\ref{Ab}. An interesting question is whether the property \ref{Xe} of Theorem~\ref{Ab} yields ({\sc Dam}). We give the answer to this question. Moreover, in Section~\ref{iA}, we prove that {\sc Damerell} definition ({\sc Dam}) of distance-regular digraph is equivalent to our definition of distance-regular digraph (see Theorem~\ref{Ab}(i)). In the same paper \cite{Drm}, {\sc Damerell} had extended some results of {\sc Bannai et al.} (\cite{BCK}) and {\sc Lam} (\cite{Lam}) from distance-transitive digraphs to the case of distance-regular digraphs (for example, if $g\ge 3$ is the girth of a  distance-regular digraph $\G$, then $\G$ has diameter $D=g$ (`long' distance-regular digraph) or $D=g-1$ (`short' distance-regular digraph); case $g=2$ corresponds to case of distance-regular graphs). For $g\in\{4,5,6\}$ there are some works where, among else, the intersection numbers are computed by using the small number of defining parameters (\cite{EM,LM,Tt,Ym}).

{\sc Douglas} and {\sc Nomura} in \cite{LN} defined distance-regular digraph in the following way:
\begin{enumerate}[left=2cm]
\item[({\sc DouNom})] A strongly connected digraph (without loops) with vertex set $X$ such that the size of $s^h_{ij}:=|\{x\in X\mid \partial(u,x)=i, \partial(v,x)=j \}|$ depends only on $i$, $j$ and $\partial(u,v)$, rather than the individual vertices $u$, $v$ with $\partial(u,v)=h$ (\cite[pages~34,~35]{LN}).
\end{enumerate}
The authors showed that the girth $g$ of a `short' distance-regular digraph with degree $>1$ and diameter $D$ always satisfies $D\le g\le 8$. A distance regular-digraph with diameter $D=3$ generates a nonsymmetric $3$-class association scheme. In \cite{MPc}, {\sc Monzillo} and {\sc Penjić} showed that, if $\XXi$ is a commutative $3$-class association scheme that is not an amorphic symmetric scheme, then we can always find a (di)graph $\G$ which generates $\XXi$; from Theorem~\ref{Ab} it follows that, if the diameter of such a graph is equal to $d$ (where $d+1$ is the number of distinct eigenvalues), then $\G$ is distance-regular. In this light it would be interesting to  carefully investigate when $3$-class scheme contain a strongly connected graph of diameter $3$ as well as a correspondence between the case $g=3$ of distance-regular digraphs and the so-called {\em skew Hadamard matrices} (see, for example, \cite{GCf,GCb,GC,IM,Jl}). {\sc Leonard} in \cite{Lda} defined distance-regular digraphs as a commutative $P$-polynomial association schemes $\XXi$, i.e., he defined distance-regular digraph as the following association scheme:
\begin{enumerate}[left=1cm]
\item[({\sc Leo})] $(X,\{R_i\}_{i=0}^D)$ is a commutative association scheme with adjacency matrices $\{B_i\}_{i=0}^d$ such that for some ordering $(B_0,B:=B_1,\ldots,B_d)$ of the adjacency matrices, there exist polynomials $p_i$ $(0\le i\le d)$ with $\dgr p_i\le i$ and $B_i=p_i(B)$.
\end{enumerate}
In Section~\ref{dA}, we show (re-discover) that any $P$-polynomial commutative (nonsymmetric) association scheme with a $P$-polynomial ordering $(B_0,B=B_1,\ldots,B_d)$ produces distance-regular digraph $\G=\G(B)$. Using ({\sc Leo}) as definition of distance-regular digraph, {\sc Leonard} in \cite{Lda} studied $Q$-polynomial distance-regular digraphs. In \cite{Ma}, {\sc Munemasa} used Theorem~\ref{Ab}\ref{Xe} as definition of distance-regular digraph in order to study nonsymmetric $P$- and $Q$-polynomial association schemes. Some other results about distance-regular digraphs can be found, for example, in \cite{AOST,CT,Lda}.

A digraph $\G$ is weakly distance-regular in sense of {\sc Comellas et al.} \cite{CFGM} if the number of walks of length $\ell$ $(0\le\ell\le D)$ in $\G$ between two vertices $x,y\in X$ only depends on $h=\partial(x,y)$ (on the distance from $x$ to $y$). In \cite[Theorem~2.2]{CFGM} {\sc Comellas et al.} proved that the following {\rm (Ci)--(Civ)} are equivalent:
(Ci)~$\G$ is weakly distance-regular in sense of {\sc Comellas et al.} \cite{CFGM}; 
(Cii)~The distance-$i$ matrix $A_i$ is a polynomial of degree $i$ in the adjacency matrix $A$; 
(Ciii)~The set of distance-$i$ matrices $\{A_i\}_{i=0}^D$ is a basis of the adjacency algebra $\A$ of $\G$; 
(Civ)~For any two vertices $u,v\in X$ at distance $\partial(u,v)=h$, the numbers $p^h_{ij}(u,v):=|\G_i^\ra(u)\cap\G_j^\la(v)|$ do not depend on the vertices $u$ and $v$, but only on their distance $h$.
For the moment consider the following property of a digraph $\G=\G(A)$:
\begin{enumerate}[left=1cm]
\item[({\sc Com})] $A$ is a normal matrix and $\G$ is a weakly distance-regular in sense of {\sc Comellas et al.}~\cite{CFGM}.
\end{enumerate}
In \cite[Proposition~2.6]{CFGM} {\sc Comellas et al.} proved an equivalence between the properties ({\sc Dam}) and ({\sc Com}). Among else, they also studied algebraic-combinatorial characterizations of such graphs (see \cite[Theorem~2.2]{CFGM}) and the spectrum of weakly distance-regular digraphs, as well as some constructions of such graphs. Some papers that are related with combinatorial properties of weakly distance-regular digraphs (in the sense of number of walks) are \cite{CfFgM,GzJdZ,LL,DO}. For the study of the spectrum of digraphs related with weakly distance-regular property in the sense of {\sc Comellas et al.} \cite{CFGM}, we recommend papers \cite{FaMm,Ogr}.

For the moment let $\G$ denote digraph with vertex set $X$, and let $\Delta_{(x,y)}=(\partial(x,y),\partial(y,x))$ denote an ordered pair in which the first coordinate is the distance from $x$ to $y$, and the second coordinate is the distance from $y$ to $x$. The ordered pair $\Delta_{(x,y)}$ is called {\em two-way distance} between $x$ and $y$. Let 
$$
\Delta=\{\Delta_{(x,y)}\mid x,y\in X\}=\{(\partial(x,y),\partial(y,x))\mid x,y\in X\}
$$ 
denote the set of all (pairs of) two-way distances $\Delta_{(x,y)}$. For any $\ii\in\Delta$, let $R_{\ii}$ denote the set of ordered pairs $(x,y)$ with $(\partial(x,y),\partial(y,x))=\ii$. Note that $\{R_{\ii}\}_{\ii\in\Delta}$ is a partition of $X\times X$. A digraph $\G$ is said to be weakly distance-regular in sense of {\sc Wang} and {\sc Suzuki} \cite{WS} if, for any $\hh,\ii,\jj\in\Delta$ and $\Delta_{(x,y)}=\hh$, the number of $z\in X$ such that $\Delta_{(x,z)}=\ii$ and $\Delta_{(z,y)}=\jj$ depends only on $\hh$, $\ii$ and $\jj$. To explain this notation in sense of association schemes, for the moment let $\0=(0,0)$. Note that (AS1') $R_{\oo}=\{(x,x)\mid x\in X\}$; and (AS2') $\{R_{\ii}\}_{\ii\in\Delta}$ is a partition of the Cartesian product $X\times X$. We also have (AS3') the relation $R^\top_{\jj}=\{(y,x)\mid (x,y)\in R_{\jj}\}$ is in $\{R_{\ii}\}_{\ii\in\Delta}$; as well as (AS4'), for each triple $\ii,\jj,\hh$ $(\ii,\jj,\hh\in\Delta)$ and $(x,y)\in R_{\hh}$, the scalar
$$
|\{z\in X \mid (x,z)\in R_{\ii} \mbox{ and } (z,y)\in R_{\jj} \}|
$$
does not depend on the choice of the pair $(x,y)\in R_{\hh}$. This yields that  a digraph $\G$ is weakly distance-regular in sense of {\sc Wang} and {\sc Suzuki} \cite{WS} if and only if $(X,\{R_{\ii}\}_{\ii\in\Delta})$ is a $|\Delta|$-class association scheme (for a different approach in the explanation of this fact, see {\sc Suzuki} \cite{ShT}). In such a case, $\XXi(\G)$ is called the {\em attached scheme} of $\G$. 
In Theorem~\ref{Ab} (see property~\ref{Xi}), we show which subfamily of weakly distance-regular digraphs in the sense of {\sc Wang} and {\sc Suzuki} \cite{WS} are distance-regular. 
In \cite{MPc} we have studied the case when $|\Delta|=d+1$, where $d+1$ is the number of distinct eigenvalues of $\G$. In \cite{ZYW} {\sc Zeng}, {\sc Yang} and {\sc Wang} characterized all distance-regular digraphs in sense of isomorphisms between distance-regular digraphs and the relations $R_i$ of an attached association scheme of $\G$. In \cite{LGG,ShT,Wk,Yi,Yii} some special families of weakly distance-regular digraphs in sense of {\sc Wang} and {\sc Suzuki} of small valency have been classified. Algebraic restrictions on weakly distance-regular digraphs, called thin, quasi-thin and thick, are studied in \cite{ShT,Yqt,YWt}. Other papers that are directly or indirectly involved in the study of weakly distance-regular digraphs in sense of {\sc Wang} and {\sc Suzuki} are, for example, \cite{FW,FZL,GK,LS,Wki,WF}.

The contribution of this paper is in a combinatorial meaning of distance-regular digraph in sense of equitable partition, from which we get several new characterizations. As we have mentioned above, under the assumption that $A$ is a normal matrix, a combinatorial meaning of distance-regular digraph in sense of number of walks is already known from \cite{CFGM}. In this paper, we start with pure a combinatorial definition (from the equitable partition, see property~\ref{Xh} of Theorem~\ref{Ab}) and we proceed step by step to derivate all other properties (all of them have a similar counterpart in the theory of undirected distance-regular graphs). In the way, from our definition we also rediscover all known results from theory of distance-regular digraphs. As far as we know, no one gave some combinatorial meaning of distance-regular digraphs in sense of equitable partitions. The algebraic properties~\ref{Xm}--\ref{Xn} of Theorem~\ref{Ab} (the last property is the spectral excess theorem for distance-regular digraphs) are of  particular interest. Our proof of Theorem~\ref{Ab} use linear algebra technique, combinatorial technique, as well ass ``usual'' trick that can be found, for example in, \cite{CFGM,FaD,GRac,LpTm,SPm,SP,Rp}. To chase our results, we got inspiration from two {\sc Fiol}'s papers \cite{FaD,FQpG}.

Our paper is organized as follows. In Section~\ref{2B}, we recall basic concepts from algebraic graph theory, including commutative association schemes (experts from the field can skip this section). Section~\ref{Av} is an overview of all properties that we use later in the paper: in this section, we show what for a graph means if $A^\top,A_iA \in\{A_0,A_1,\ldots,A_D\}$ $(0\le i\le D)$, where $A_i$'s the are distance-$i$ matrices of the graph. Our paper then starts from Section~\ref{Ga}, where we define distance-regular digraphs in terms of equitable partitions. Through Sections~\ref{dA}--\ref{qa}, we reprove all-well known algebraic characterizations, and provide new ones. We finish the paper with Section~\ref{jD}, where we describe possible further directions. {\color{blue} This manuscript has $37$ pages, but actually the whole paper has $20$ pages (see Section~\ref{Ga}--\ref{qa}).}

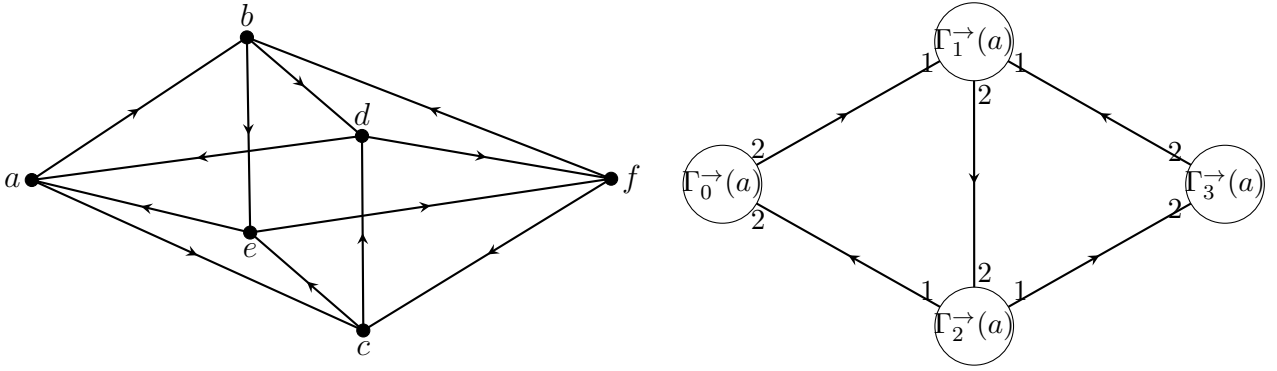
\begin{figure}[t!]
\begin{center}
\begin{tikzpicture}[scale=.43]
\draw [line width=.8pt,draw=black,postaction={on each segment={mid arrow=black}}] (-9.72,2.6) -- (-3.14,7);
\draw [line width=.8pt,draw=black,postaction={on each segment={mid arrow=black}}] (-9.72,2.6) -- (0.42,-2.04);
\draw [line width=.8pt,draw=black,postaction={on each segment={mid arrow=black}}] (0.38,3.96) -- (-9.72,2.6);
\draw [line width=.8pt,draw=black,postaction={on each segment={mid arrow=black}}] (-3.04,0.98) -- (-9.72,2.6);
\draw [line width=.8pt,draw=black,postaction={on each segment={mid arrow=black}}] (-3.14,7) -- (-3.04,0.98);
\draw [line width=.8pt,draw=black,postaction={on each segment={mid arrow=black}}] (0.42,-2.04) -- (0.38,3.96);
\draw [line width=.8pt,draw=black,postaction={on each segment={mid arrow=black}}] (8.0001,2.64) -- (-3.14,7);
\draw [line width=.8pt,draw=black,postaction={on each segment={mid arrow=black}}] (0.38,3.96) -- (8.0001,2.64);
\draw [line width=.8pt,draw=black,postaction={on each segment={mid arrow=black}}] (-3.04,0.98) -- (8.0001,2.64);
\draw [line width=.8pt,draw=black,postaction={on each segment={mid arrow=black}}] (8.0001,2.64) -- (0.42,-2.04);
\draw [line width=.8pt,draw=black,postaction={on each segment={mid arrow=black}}] (0.42,-2.04) -- (-3.04,0.98);
\draw [line width=.8pt,draw=black,postaction={on each segment={mid arrow=black}}] (-3.14,7) -- (0.38,3.96);
\draw [fill=black] (-9.72,2.6) circle [radius=0.2];
\node at (-9.72,2.6) [anchor=east] {$a$};
\draw [fill=black] (-3.14,7) circle [radius=0.2];
\node at (-3.14,7) [anchor=south] {$b$};
\draw [fill=black] (0.42,-2.04) circle [radius=0.2];
\node at (0.42,-2.04) [anchor=north] {$c$};
\draw [fill=black] (0.38,3.96) circle [radius=0.2];
\node at (0.38,3.96) [anchor=south] {$d$};
\draw [fill=black] (-3.04,0.98) circle [radius=0.2];
\node at (-3.04,0.98) [anchor=north] {$e$};
\draw [fill=black] (8.0001,2.64) circle [radius=0.2];
\node at (8.0001,2.64) [anchor=west] {$f$};
\end{tikzpicture}~
{\small
\begin{tikzpicture}[scale=.47]
\draw[line width=.8pt,draw=black,postaction={on each segment={mid arrow=black}}] (-7.04,-0.01) -- (0,4.01);
\draw[line width=.8pt,draw=black,postaction={on each segment={mid arrow=black}}] (0.006,-4.014) -- (-7.04,-0.01);
\draw[line width=.8pt,draw=black,postaction={on each segment={mid arrow=black}}] (0,4.01) -- (0.006,-4.014);
\draw[line width=.8pt,draw=black,postaction={on each segment={mid arrow=black}}] (7.024,-0.01) -- (0,4.01);
\draw[line width=.8pt,draw=black,postaction={on each segment={mid arrow=black}}] (0.006,-4.014) -- (7.024,-0.01);
\draw [fill=white] (-7.04,-0.01) circle [radius=1.1];
\draw [fill=white] (0,4.01) circle [radius=1.1];
\draw [fill=white] (0.006,-4.014) circle [radius=1.1];
\draw [fill=white] (7.024,-0.01) circle [radius=1.1];
\node at (-7.04,-0.01) {\small$\G^\ra_0(a)$};
\node at (0,4.01) {\small$\G^\ra_1(a)$};
\node at (0.006,-4.014) {\small$\G^\ra_2(a)$};
\node at (7.024,-0.01) {\small$\G^\ra_3(a)$};
\node  at (-6.04,0.99) {$2$};
\node  at (-6.04,-1.01) {$2$};
\node  at (1.3,3.41) {$1$};
\node  at (-1.3,3.41) {$1$};
\node  at (0.3,2.51) {$2$};
\node  at (1.3,-3) {$1$};
\node  at (-1.3,-3) {$1$};
\node  at (0.3,-2.51) {$2$};
\node  at (5.624,0.9) {$2$};
\node  at (5.624,-0.7) {$2$};
\end{tikzpicture}
}
\caption{Example of a distance-regular digraph and its distance-$i$ diagram around the vertex~$a$. Note that $\G_1^{\ra}(a)=\{b,c\}$, $\G_2^{\ra}(a)=\{d,e\}$ and $\G_3^{\ra}(a)=\{f\}$.}
\label{2f}
\end{center}
\end{figure}


\section{Preliminaries}
\label{2B}

{\color{blue}
{\bf Comment that we will delete from the final version of the paper:} 
Sections~\ref{2B} and \ref{Av} have $10$ pages. We expect that those readers who are familiar with the field will probably skip these sections and start to read the paper from Section~\ref{Ga}. With a shorter version of these two sections, we found that the paper is not readable as we want it to be (for all readers). 
}

A {\em digraph} with {\em vertex set} $X$ and {\em arc set} $\E$ is a pair $\G=(X, \E)$ which consists of a finite set $X=X(\G)$ of {\em vertices} and a set $\E = \E(\G)$ of {\em arcs} ({\em directed edges}) between vertices of $\G$. As the initial and final vertices of an arc are not necessarily different, the digraphs may have loops (arcs from a vertex to itself), and multiple arcs, that is, there can be more than one arc from each vertex to any other. If $e = (x,y)\in\E$ is an arc from $x$ to $y$, then the vertex $x$ (and the arc $e$) is {\em adjacent to} the vertex $y$, and the vertex $y$ (and the arc $e$) is {\em adjacent from} $x$. The {\em converse directed graph} $\ol{\G}$ is obtained from $\G$ by reversing the direction of each arc. 
For a vertex $x$, let $\G_1^{\la}(x)$ (resp. $\G_1^{\ra}(x)$) denote the set of vertices adjacent to (resp. from) the vertex $x$. In another words,
$$
\G_1^{\ra}(x) = \{z\mid (x,z)\in \E(\G)\}
\qquad\mbox{ and }\qquad
\G_1^{\la}(x) = \{z\mid (z,x)\in \E(\G)\}.
$$
Two small comments about the above notation: (i) drawing directed edge from $x$ to $z$, we have $x\ra z$, which yields the idea of using the notation $\G_1^{\ra}(x)$; (ii) drawing directed edge from $z$ to $x$, we have $x\la z$ (or $z\ra x$), which yields the idea of using the notation $\G_1^{\la}(x)$. The elements of $\G_1^{\ra}(x)$ are called {\em neighbors} of $x$. Instead of a set of vertices, we can consider a set of arcs: for a vertex $y$, let $\delta_1^{\la}(y)$ (resp. $\delta_1^{\ra}(y)$) denote the set of arcs adjacent to (resp. from) the vertex $y$. The number $|\delta_1^{\ra}(y)|$ is called the {\em out-degree of $y$} and is equal to the number of edges leaving $y$. The number $|\delta_1^{\la}(y)|$ is called the {\em in-degree of $y$} and is equal to the number of edges going to $y$. A digraph $\G$ is {\em $k$-regular} (of valency $k$) if $|\delta^{\ra}_1(y)| = |\delta_1^{\la}(y)| = k$ for all $y\in X$. We call $\G$ {\em simple}, if $\G$ contains neither loops nor multiple edges. In this paper we will always assume that our digraph is simple and strongly connected.

Let $\G = (X,\E)$ denote a digraph. For any two vertices $x, y \in X$, a {\em directed walk} of length $h$ from $x$ to $y$ is a sequence $[x_0,x_1,x_2,\ldots,x_h]$ $(x_i\in X,\, 0\le i\le h)$ such that $x_0 = x$, $x_h = y$, and $x_i$ is adjacent to $x_{i+1}$ (i.e. $x_{i+1}\in\G^{\ra}_1(x_i)$) for $0\le i\le h-1$. We say that $\G$ is {\em strongly connected} if for any $x, y\in X$ there is a directed walk from $x$ to $y$. A {\em closed directed walk} is a directed walk from a vertex to itself. A {\em directed path} is a directed walk such that all vertices of the directed walk are distinct. A {\em cycle} is a closed directed path. The {\em girth} of $\G$ is the length of a shortest cycle in $\G$.

For any $x, y\in X$, the {\em distance} from $x$ to $y$ (or between $x$ and $y$), denoted by $\partial(x, y)$, is the length of a shortest directed path from $x$ to $y$. The {\em diameter} $D = D(\G)$ of a strongly connected digraph $\G$ is defined to be 
$$
D = \max\{\partial(y,z)\,|\,y, z\in X\}.
$$
For a vertex $x\in X$ and any non-negative integer $i$ not exceeding $D$, let $\G^\ra_i(x)$ (or $\G_i(x)$) denote the subset of vertices in $X$ that are at distance $i$ from $x$, i.e.,
$$
\G^\ra_i(x)=\{z\in X\mid \partial(x,z)=i\}.
$$
We also define the set $\G^\la_i(x)$ as $\G^\la_i(x)=\{z\in X\mid \partial(z,x)=i\}$. Let $\G_{-1}(x) = \G_{D+1}(x) := \emptyset$. The {\it eccentricity} of $x$, denoted by $\varepsilon=\varepsilon(x)$, is the maximum distance between $x$ and any other vertex of $\G$. Note that the diameter of $\G$ equals $\max\{\varepsilon(x)\mid x\in X\}$.

All undirected graphs in this paper can be understood as digraphs in which an undirected edge between two vertices $x$ and $y$ represents two arcs, an arc from $x$ to $y$, and an arc from $y$ to $x$. In diagrams instead of drawing two arcs we draw one undirected edge between vertices $x$ and $y$.  For a basic introduction to the theory of undirected graphs we refer to \cite[Section~2]{FMPS}. With the word {\em graph} we refer to a finite simple directed or undirected graph.

\subsection{Equitable and distance-faithful partition}

A {\it partition} of a digraph $\G$ is a collection $\{\P_0, \P_1, \dots, \P_s\}$ of nonempty subsets of the vertex set $X$, such that $X=\ds\bigcup_{i=0}^s \P_i$ and $\P_i\cap\P_j=\emptyset$ for all $i,j$ $(0 \le i,j \le s,\, i\ne j)$. An {\it equitable partition} of a digraph $\G$ is a partition $\{\P_0, \P_1, \dots, \P_s\}$ of its vertex set, such that for all integers $i,j$ $(0 \le i,j \le s)$ the following two conditions hold.
\begin{enumerate}[label=\rm(\roman*)]
\item The number $d^{\ra}_{ij}$ of neighbors which a vertex in the cell $\P_i$ has in the cell $\P_j$ is independent of the choice of the vertex in $\P_i$ (i.e., for every $y\in\P_i$ we have $|\G^{\ra}_1(y)\cap\P_j|=d^{\ra}_{ij}$).
\item The number $d^{\la}_{ij}$ of vertices from the cell $\P_j$ which are adjacent to a vertex in $\P_i$ is independent of the choice of the vertex in $\P_i$ (i.e., for every $y\in\P_i$ we have $|\sum_{z\in\P_j}|\G^{\ra}_1(z)\cap\{y\}|=d^{\la}_{ij}=|\G_1^\la(y)\cap\P_j|$).
\end{enumerate}
We call the numbers $d^{\ra}_{ij}$, $d^{\la}_{ij}$ $(0\le i,j\le s)$ the {\em corresponding parameters}.

A {\it distance partition around} $x$ of a digraph $\G$ with vertex set $X$ is a partition $\{\G^{\ra}_0(x)=\{x\},\G^{\ra}(x),\ldots,\G^{\ra}_{\varepsilon(x)}(x)\}$ of $X$ where $\varepsilon(x)$ is eccentricity of $x$. An {\it $x$-distance-faithful partition} $\{\P_0=\{x\},\P_1,\ldots,\P_s\}$ (with $s\ge\varepsilon$) is a refinement of the distance partition around $x$ (here refinement means that some of $\G_i(x)$ can be equal to a union of some $\P_h$'s).

The {\it distance-$i$ diagram} of an equitable partition $\Pi=\{\G^{\ra}_0(x)=\{x\},\G^{\ra}(x),\ldots,\G^{\ra}_{\varepsilon(x)}(x)\}$ (around $x$) of a graph $\G$ is a collection of circles indexed by the sets of $\Pi$ with directed edges between some of them. If there is no directed edge between $\G^\ra_i(x)$ and $\G^\ra_j(x)$, then it means that there is no directed edge $yz$ for any $y\in\G^\ra_i(x)$ and $z\in\G^\ra_j(x)$. If there is a directed edge between $\G^\ra_i(x)$ and $\G^\ra_j(x)$, then a number on the line (from $\G^\ra_i(x)$ to $\G^\ra_j(x)$) near the circle $\G^\ra_i(x)$ denotes the corresponding parameter $d^{\ra}_{ij}$. A number above or below a circle $\G^\ra_i(x)$ denotes the corresponding parameter $d^{\ra}_{ii}(=d^{\la}_{ii})$. A similar explanation holds for the corresponding parameter $d^{\la}_{ij}$ (see Figures~\ref{2f} for an example).

We say that the combinatorial structure of the distance-$i$ diagram is {\em the same} around every vertex if for every vertex $x$ there exists an $x$-distance-faithful equitable partition with $D$ cells of same cardinalities and (same) corresponding parameters do not depend on the choice of~$x$.


\subsection{Elementary algebraic graph theory and the standard basis}

In this section, we recall some definitions and basic concepts from algebraic graph theory.

The {\em adjacency matrix} $A\in\Mat_X(\CC)$ of a digraph $\G$ (with vertex set $X$) is indexed by the vertices from $X$, and is defined in the following way
$$
\mbox{$(A)_{yz} =$ the number of arcs from $y$ to $z$}\qquad (y,z\in X)
$$
(note that $(A)_{yz}\in\ZZ^+_0$). Moreover, if we allow loops, the diagonal entries of $A$ can be different from zero. If $\G$ is simple graph (i.e., if the adjacency matrix $A$ is a $01$-matrix), then the $yz$-entry of the power $A^\ell$ $(\ell\in\NN)$ corresponds to the number of $\ell$-walks from the vertex $y$ to the vertex $z$ in $\G$ (see Lemma~\ref{hB}). As we already mention, in this paper we will always work with simple and strongly connected graphs.

\begin{lemma}
\label{hB}
Let $\G$ denote a simple strongly connected digraph with vertex set $X$, diameter $D$ and adjacency matrix $A$. The number of walks of length $\ell\in\NN$ in $\G$ from $x$ to $y$ is equal to $(x,y)$-entry of the matrix $A^\ell$.
\end{lemma}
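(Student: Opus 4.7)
The plan is a standard induction on the walk length $\ell$, exploiting the fact that matrix multiplication decomposes a walk at an intermediate vertex.

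First I would treat the base case $\ell = 1$. Since $\G$ is simple, the adjacency matrix $A$ is a $01$-matrix with $(A)_{xy} = 1$ precisely when $(x,y) \in \E(\G)$, and $0$ otherwise. A walk of length $1$ from $x$ to $y$ is by definition an arc from $x$ to $y$, so the number of such walks equals $(A)_{xy}$. This matches the claim for $\ell = 1$.

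For the inductive step, assume the lemma holds for some $\ell \ge 1$. I would write
\[
(A^{\ell+1})_{xy} \;=\; \sum_{z \in X} (A^\ell)_{xz}\,(A)_{zy},
\]
and apply the inductive hypothesis to $(A^\ell)_{xz}$, which counts walks of length $\ell$ from $x$ to $z$, while $(A)_{zy}$ counts the arcs from $z$ to $y$ (i.e., $0$ or $1$). The product $(A^\ell)_{xz}(A)_{zy}$ thus counts the walks of length $\ell+1$ from $x$ to $y$ whose penultimate vertex is $z$. Summing over all $z \in X$ partitions the walks of length $\ell+1$ according to the choice of penultimate vertex, yielding $(A^{\ell+1})_{xy}$ as the total number of such walks.

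There is no genuine obstacle here: the proof is essentially a bookkeeping argument, and the only mild subtlety is noting that the sum over $z$ is well-defined (finite, since $X$ is finite) and that every walk of length $\ell+1$ from $x$ to $y$ is uniquely identified by its prefix of length $\ell$ together with its final arc. Strong connectedness and the diameter assumption play no role — the lemma is stated for all $\ell \in \NN$ and holds for any digraph with a $01$ adjacency matrix.
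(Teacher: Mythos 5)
Your induction on $\ell$ is correct, and it is the standard argument; the paper itself dismisses this lemma as ``Routine'' without giving any details, so your write-up simply supplies the bookkeeping the authors omitted. Your closing remark that strong connectedness and the diameter play no role is also accurate.
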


\begin{proof}
Routine.
\end{proof}

The distance-$i$ matrix $A_i$ of a digraph $\G$ with diameter $D$ and vertex set $X$ is defined by
$$
(A_i)_{zy}=\left\{\begin{matrix}
1&\mbox{ if $\partial(z,y)=i$},\\
0&\mbox{ otherwise.~~~~}
\end{matrix}\right. \qquad(z,y\in X,~0\le i\le D).
$$
In particular, $A_0=I$ and $A_1=A$. These matrices play a key role in the study of distance-regularity. A matrix $A\in\Mat_X(\CC)$ is said to be a {\em reducible} when there exists a permutation matrix $P$ such that $P^\top A P=\left(
\begin{matrix}
X&Y\\ \O&Z
\end{matrix}
\right)$, where $X$ and $Z$ are both square, and $\O$ is a zero matrix. Otherwise, $A$ is said to be {\em irreducible}.

\begin{lemma}[{see, for example, \cite[Section~8.3]{MCm}}]
\label{gb}
A digraph $\G$ with adjacency matrix $A$ is strongly connected if and only if $A$ is an irreducible matrix.
\end{lemma}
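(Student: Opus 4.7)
The plan is to prove both directions by constructing the correspondence between a block-triangular structure of $A$ (witnessing reducibility) and a nontrivial forward-closed subset of $X$ (witnessing failure of strong connectivity).

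For the direction ``$A$ reducible $\Rightarrow$ $\G$ not strongly connected,'' suppose $P^\top A P = \left(\begin{smallmatrix} X & Y \\ \O & Z \end{smallmatrix}\right)$ with square diagonal blocks. I would interpret $P$ as a relabeling of vertices that splits $X$ into two nonempty sets $T_1$ and $T_2$ (corresponding to the row/column index sets of the two diagonal blocks). The fact that the bottom-left block is $\O$ says exactly that $(A)_{uv}=0$ whenever $u\in T_2$ and $v\in T_1$; equivalently, $\G^\ra_1(u)\subseteq T_2$ for every $u\in T_2$. By an immediate induction on length, every directed walk starting in $T_2$ stays in $T_2$, so no vertex of $T_1$ is reachable from any vertex of $T_2$, violating strong connectivity.

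For the converse, suppose $\G$ is not strongly connected. Then there exist vertices $x,y\in X$ with no directed walk from $x$ to $y$. Define
$$
T=\{x\}\cup\{z\in X\mid\mbox{there is a directed walk from }x\mbox{ to }z\}.
$$
Then $x\in T$ and $y\notin T$, so $T$ is a proper nonempty subset of $X$. Moreover, if $u\in T$ and $(u,v)\in\E(\G)$, concatenating a walk from $x$ to $u$ with the arc $(u,v)$ yields a walk from $x$ to $v$, whence $v\in T$; thus $T$ is closed under taking successors. Now let $P$ be the permutation matrix that reorders vertices so that those of $X\setminus T$ come first and those of $T$ come second. Since no arc leaves $T$ for $X\setminus T$, the lower-left block of $P^\top A P$ is the zero matrix, and $A$ is reducible.

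There is really no obstacle here: the whole argument hinges on recognizing that reducibility of $A$ is precisely a combinatorial statement about the existence of a nontrivial forward-closed subset of vertices, which is exactly the obstruction to strong connectivity. I would not spell out the routine verification that the induction on walk length works, and I would invoke Lemma~\ref{hB} only implicitly (the $\ell$-walk interpretation of powers of $A$ is not strictly needed since the argument can be carried out directly with arcs). Thus the lemma reduces to a simple dictionary between arc structure and the sparsity pattern of $A$ up to simultaneous row/column permutation.
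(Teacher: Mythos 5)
Your proof is correct. The paper does not actually prove Lemma~\ref{gb}; it only cites it (to \cite[Section~8.3]{MCm}), so there is no in-paper argument to compare against. What you give is the standard dictionary between reducibility of $A$ and the existence of a proper nonempty forward-closed vertex set: both directions are handled properly, the reachability set $T$ is the right witness for the converse, and you correctly check that both diagonal blocks are nonempty. This is exactly the argument one finds in the cited reference.
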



\begin{theorem}[Perron--Frobenius Theorem]
\label{gc}
Let $\G=\G(A)$ denote a simple strongly connected digraph with spectrum $\spec(\G)$. If $\theta=\max_{\lambda\in\spec(\G)}|\lambda|$, then the following hold.
\begin{enumerate}[label=\rm(\roman*)]
\item $\theta\in\spec(\G)$.
\item The algebraic multiplicity of $\theta$ is equal to $1$.
\item There exists an eigenvector $\nnu$ with all positive entries, such that $A\nnu = \theta \nnu$.
\end{enumerate}
Sometimes it is useful to normalize a vector $\nnu$ from {\rm (iii)} in such a way that the smallest entry is equal to $1$. Such a vector $\nnu$ is called a Perron--Frobenius eigenvector.
\end{theorem}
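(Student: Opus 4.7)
The plan is to prove the three claims in the standard Perron--Frobenius order: existence of a nonnegative eigenvector attaining the spectral radius, strict positivity via irreducibility, and algebraic simplicity through a left-eigenvector duality argument. Throughout I will lean on the fact, immediate from Lemma~\ref{gb} together with strong connectivity and $D\le N-1$ (where $N=|X|$), that $(I+A)^{N-1}$ is entry-wise strictly positive: every pair $x,y\in X$ is joined by a directed walk of some length $\ell\le N-1$, so the $xy$-entry is a positive combination of the $(A^\ell)_{xy}$'s.

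\textbf{Existence of a nonnegative eigenvector at the spectral radius.} To produce an eigenvector for $\theta$, the plan is to invoke the Collatz--Wielandt functional: for $\0\neq x\in\RR^N_{\ge 0}$ set $r(x)=\min\{(Ax)_i/x_i:x_i>0\}$, and argue that $\theta=\sup_x r(x)$ is attained by some $\nnu\ge\0$. By scale-invariance one restricts to the standard simplex; to handle vectors with zero entries one replaces $x$ by $(I+A)^{N-1}x$, which is strictly positive and only increases $r$, so the supremum is attained on a compact set of positive vectors where $r$ is continuous. A maximizer $\nnu$ must then satisfy $(A\nnu)_i=\theta\,\nnu_i$ for every $i$ (otherwise a small perturbation raises the minimum ratio), giving $A\nnu=\theta\nnu$; this settles (i) and the existence part of (iii).

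\textbf{Strict positivity and matching the spectral radius.} Applying $(I+A)^{N-1}$ to $A\nnu=\theta\nnu$ yields $(I+A)^{N-1}\nnu=(1+\theta)^{N-1}\nnu$; since the left-hand side is a strictly positive matrix applied to a nonzero nonnegative vector, its output is strictly positive, forcing $\nnu>\0$. To confirm $\theta=\max_{\lambda\in\spec(\G)}|\lambda|$, take any eigenvalue $\lambda$ with eigenvector $\vv$, apply the triangle inequality coordinatewise in $\lambda\vv=A\vv$ to obtain $|\lambda|\,|\vv|\le A|\vv|$, and feed $|\vv|$ into the Collatz--Wielandt formula to conclude $|\lambda|\le\theta$.

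\textbf{Algebraic simplicity.} For geometric simplicity: given another real $\theta$-eigenvector $\ww$, choose the largest $c\in\RR$ with $\nnu-c\ww\ge\0$; the resulting vector is a nonnegative $\theta$-eigenvector with at least one zero entry, so the strict-positivity argument above forces $\nnu-c\ww=\0$, i.e.\ $\ww\in\Span\{\nnu\}$. The step I expect to be the main obstacle is ruling out a nontrivial Jordan block at $\theta$, and the plan is the standard duality argument: apply (i)--(iii), already established, to $A^\top$ (which is the adjacency matrix of the reverse digraph $\ol\G$, still simple and strongly connected, hence still irreducible) to obtain a strictly positive left eigenvector $\mu>\0$ with $\mu^\top A=\theta\mu^\top$. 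If a generalized eigenvector $\ww$ existed with $(A-\theta I)\ww=\nnu$, then
$$
0<\mu^\top\nnu=\mu^\top(A-\theta I)\ww=\bigl(\mu^\top A-\theta\mu^\top\bigr)\ww=0,
$$
a contradiction. Together with geometric simplicity this yields~(ii).
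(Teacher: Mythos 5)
Your proposal is correct. Note, however, that the paper does not actually prove this theorem: its ``proof'' consists of the single line ``Routine using Lemma~\ref{gb}. (See, for example, \cite[Section~8.3]{MCm}.)'', i.e.\ it defers entirely to a standard reference. What you have written out is precisely the classical argument that such references contain: positivity of $(I+A)^{N-1}$ from irreducibility, the Collatz--Wielandt functional to produce a nonnegative eigenvector at the spectral radius, strict positivity by applying $(I+A)^{N-1}$, geometric simplicity by subtracting the extremal multiple of a second eigenvector, and algebraic simplicity by pairing against the positive left eigenvector of $A^\top$ (legitimate here, since the reverse digraph is again simple and strongly connected). So the two ``approaches'' do not genuinely differ; yours is simply self-contained where the paper's is a citation. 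Two small points of presentation rather than substance: you use $\theta$ both for the Collatz--Wielandt supremum and for the spectral radius before the second paragraph establishes that these coincide (the maximizer argument shows the supremum is an eigenvalue, hence at most the spectral radius, and the triangle-inequality step gives the reverse bound, so the identification is justified but should be stated in that order); and in the geometric-simplicity step a second eigenvector $\ww$ may a priori be complex, so one should first pass to its real and imaginary parts, each of which is a real $\theta$-eigenvector, and normalize the sign so that $\ww$ has a positive entry before choosing the extremal $c$.
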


\begin{proof}
Routine using Lemma~\ref{gb}. (See, for example, \cite[Section~8.3]{MCm}.)
\end{proof}

\smallskip
A matrix $A\in\Mat_X(\CC)$ is called {\em normal} if it commutes with its adjoint, i.e. if $A\ol{A}^\top=\ol{A}^\top A$.

\begin{theorem}[{see, for example, \cite[Chapter~7]{AS}}]
\label{gd}
Let $A\in\Mat_X(\CC)$ denote a matrix over $\CC$, with rows and columns indexed by $X$. Then, the following are equivalent.
\begin{enumerate}[label=\rm(\roman*)]
\item $A$ is normal.
\item $\CC^{|X|}$ has an orthonormal basis consisting of eigenvectors of $A$.
\item $A$ is a diagonalizable matrix.
\item The algebraic multiplicity of $\lambda$ is equal to the geometric multiplicity of $\lambda$, for every eigenvalue $\lambda$ of $A$.
\end{enumerate}
\end{theorem}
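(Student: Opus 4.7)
The plan is to establish the equivalence via the chain (i) $\Rightarrow$ (ii) $\Rightarrow$ (iii) $\Rightarrow$ (iv) $\Rightarrow$ (i), with Schur's unitary triangularization serving as the central technical tool. Schur's theorem produces, for any $A\in\Mat_X(\CC)$, a unitary matrix $U$ and an upper-triangular $T$ with $U^*AU=T$; the normality hypothesis in (i) will be leveraged to show that $T$ is in fact diagonal, and from there the remaining implications will follow by a mixture of direct computations and standard facts about diagonalizability.

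First I would prove (i) $\Rightarrow$ (ii). Starting from $A=UTU^*$ together with $AA^*=A^*A$, one deduces $TT^*=T^*T$. Comparing the $(1,1)$-entries yields $\sum_{j\ge 1}|T_{1j}|^2=|T_{11}|^2$, forcing $T_{1j}=0$ for all $j>1$; iterating this row by row shows that $T$ is diagonal, so that the columns of $U$ form an orthonormal basis of eigenvectors. The implication (ii) $\Rightarrow$ (iii) is immediate, since an orthonormal eigenbasis is in particular a basis and therefore $A$ is similar (by a unitary change of coordinates) to a diagonal matrix. The equivalence (iii) $\Leftrightarrow$ (iv) is the classical diagonalizability criterion: $A$ is diagonalizable precisely when the sum of the geometric multiplicities equals $|X|$; since each geometric multiplicity is bounded above by the algebraic one and the algebraic multiplicities sum to $|X|$, equality throughout is forced in both directions.

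The hard part will be closing the loop with (iv) $\Rightarrow$ (i). Given a basis of eigenvectors coming from (iii)/(iv), Gram--Schmidt within each eigenspace yields orthonormal bases of the individual eigenspaces, but one still needs eigenvectors attached to \emph{distinct} eigenvalues to be mutually orthogonal in order to assemble them into an orthonormal basis of $\CC^{|X|}$ and then conclude normality via the calculation $AA^*=UDU^*\,UD^*U^*=UDD^*U^*=UD^*DU^*=A^*A$. The cleanest route, and the one I expect to follow, is to read ``diagonalizable'' in (iii) as ``unitarily diagonalizable'' — the convention implicit in the cited reference — so that the similarity $A=UDU^{-1}$ may be taken with $U$ unitary, making the final computation immediate. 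Under that reading the cycle closes cleanly; in the strict literal reading the step requires this additional unitary-diagonalizability information, which is what makes this the delicate point of the argument.
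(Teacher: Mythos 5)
The paper gives no proof of this theorem at all --- it is simply quoted from the cited reference --- so there is no in-paper argument to compare against; the only question is whether your argument is sound. Three quarters of it are: (i) $\Rightarrow$ (ii) via Schur triangularization together with the entrywise comparison of $T\ol{T}^\top$ and $\ol{T}^\top T$ is the standard proof, and (ii) $\Rightarrow$ (iii) $\Leftrightarrow$ (iv) is correct as you describe.

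The difficulty you flag at (iv) $\Rightarrow$ (i) is not a delicate point to be finessed; it is fatal, because the equivalence as literally stated is false. The matrix $A=\left(\begin{smallmatrix}1&1\\0&2\end{smallmatrix}\right)$ has two distinct eigenvalues, hence satisfies (iii) and (iv) (each multiplicity is $1=1$), yet $A\ol{A}^\top=\left(\begin{smallmatrix}2&2\\2&4\end{smallmatrix}\right)\ne\left(\begin{smallmatrix}1&1\\1&5\end{smallmatrix}\right)=\ol{A}^\top A$, so it satisfies neither (i) nor (ii); as you correctly observe, eigenvectors for distinct eigenvalues of a merely diagonalizable matrix need not be orthogonal, and no application of Gram--Schmidt within eigenspaces can repair that. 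Your proposed fix --- reading ``diagonalizable'' in (iii) as ``unitarily diagonalizable'' --- does close the cycle, but it proves a different theorem from the one stated rather than the one you were asked to prove. The honest conclusion of your analysis is that the statement needs correcting: (i) and (ii) are equivalent to ``$A$ is \emph{unitarily} diagonalizable,'' while (iii) and (iv) are equivalent to each other but strictly weaker. This is not purely cosmetic for the paper, since the proof of Proposition~\ref{ec} extracts an \emph{orthonormal} eigenbasis from the bare hypothesis that $A$ is diagonalizable, i.e., it invokes exactly the false direction (the proposition itself survives, as an arbitrary eigenbasis suffices there).
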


Two matrices $A,B\in\Mat_X(\CC)$ are said to be {\em simultaneously diagonalizable} if there is a nonsingular $S\in\Mat_X(\CC)$ such that $S^{-1}AS$ and $S^{-1}BS$ are both diagonal.

\begin{lemma}[{\cite[Theorem 1.3.12]{HJ}}]
\label{ge}
Two diagonalizable matrices are simultaneously diagonalizable if and only if they commute.
\end{lemma}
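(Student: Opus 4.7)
The plan is to handle the two implications separately, with the forward direction being a one-line computation and the backward direction requiring a decomposition of the ambient space into eigenspaces.

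For the easy direction, suppose $A$ and $B$ are simultaneously diagonalizable, so there exists a nonsingular $S$ with $S^{-1}AS = D_A$ and $S^{-1}BS = D_B$, where $D_A, D_B$ are diagonal. Since any two diagonal matrices commute, $D_A D_B = D_B D_A$. Conjugating back by $S$ yields $AB = S D_A D_B S^{-1} = S D_B D_A S^{-1} = BA$, as required.

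For the harder direction, assume $A$ and $B$ are diagonalizable and $AB = BA$. I would invoke Theorem~\ref{gd}\rm{(ii)} (or the definition of diagonalizability) to decompose the ambient space as a direct sum of eigenspaces of $A$:
$$
\CC^{|X|} = \bigoplus_{\lambda \in \spec(A)} V_\lambda, \qquad V_\lambda = \{v \in \CC^{|X|} \mid Av = \lambda v\}.
$$
The key step is to observe that $B$ leaves each $V_\lambda$ invariant: if $v \in V_\lambda$, then $A(Bv) = B(Av) = B(\lambda v) = \lambda (Bv)$, so $Bv \in V_\lambda$. Hence $B$ restricts to a linear operator $B|_{V_\lambda}$ on each eigenspace.

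Next I would argue that each restriction $B|_{V_\lambda}$ is itself diagonalizable. Since $B$ is diagonalizable, its minimal polynomial $m_B(x)$ factors into distinct linear factors over $\CC$. The restriction $B|_{V_\lambda}$ is annihilated by $m_B$, so its minimal polynomial divides $m_B$ and likewise has distinct roots, which by the standard criterion implies $B|_{V_\lambda}$ is diagonalizable. Choose a basis of $V_\lambda$ consisting of eigenvectors of $B|_{V_\lambda}$; these vectors are eigenvectors of $A$ (with eigenvalue $\lambda$) and of $B$ simultaneously. Taking the union of such bases over all $\lambda \in \spec(A)$ produces a basis of $\CC^{|X|}$ of common eigenvectors, and assembling these vectors as the columns of a matrix $S$ gives the desired simultaneous diagonalization.

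The main obstacle is the diagonalizability of the restriction $B|_{V_\lambda}$: this is where the hypothesis that $B$ (and not merely the commutativity) is diagonalizable is used, and it is the point at which the minimal-polynomial argument — or equivalently, a direct appeal to Theorem~\ref{gd} applied within $V_\lambda$ — is essential. Everything else is bookkeeping.
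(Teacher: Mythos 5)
Your proof is correct; it is the standard argument for this fact. The paper itself offers no proof of Lemma~\ref{ge} --- it simply cites \cite[Theorem 1.3.12]{HJ} --- so there is nothing to compare against, but your two directions (diagonal matrices commute; commuting plus the eigenspace decomposition of $A$, invariance of each $V_\lambda$ under $B$, and diagonalizability of $B|_{V_\lambda}$ via the minimal polynomial) are exactly the textbook route. One small caution: Theorem~\ref{gd} in this paper is stated as an equivalence involving \emph{normal} matrices, so it is not the right tool for obtaining the eigenspace decomposition of a merely diagonalizable $A$; your parenthetical fallback to the definition of diagonalizability is the correct thing to rely on there.
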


\begin{theorem}
\label{gf}
Let $\M$ denote a space of commutative normal matrices. Then, there exists a unitary matrix $U\in\Mat_X(\CC)$ which diagonalizes $\M$.
\end{theorem}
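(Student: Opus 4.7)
The plan is to reduce to the case of finitely many pairwise commuting normal matrices and then induct on the dimension of the ambient space. Since $\M$ is a subspace of the finite-dimensional space $\Mat_X(\CC)$, fix a basis $B_1,\ldots,B_m$ of $\M$. Any unitary $U$ that simultaneously diagonalizes $B_1,\ldots,B_m$ will diagonalize every linear combination, so it suffices to produce such a $U$. The base case is the single-matrix statement, which is exactly Theorem~\ref{gd}(ii): normality of $B_1$ yields an orthonormal eigenbasis and hence a unitary $U$ with $U^*B_1U$ diagonal.

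For the inductive step I would decompose $\CC^{|X|}=\bigoplus_\lambda V_\lambda$ into the orthogonal direct sum of eigenspaces of $B_1$. The central claim is that each $V_\lambda$ is invariant under every $B_j$ and under every $B_j^*$, so that the restriction $B_j|_{V_\lambda}$ is again normal with respect to the inherited inner product. Invariance under $B_j$ is immediate from $B_jB_1=B_1B_j$. For invariance under $B_j^*$ I would use the following observation: because $B_j$ is normal, it admits a spectral decomposition $B_j=\sum_\mu \mu\,Q_\mu$ with pairwise orthogonal projections $Q_\mu$, and Lagrange interpolation produces a polynomial $p$ with $p(\mu)=\bar\mu$ on the spectrum, whence $B_j^*=p(B_j)$. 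Thus $B_j^*$ is a polynomial in $B_j$, commutes with $B_1$, and therefore preserves each $V_\lambda$.

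Once this is in hand, the restrictions $B_1|_{V_\lambda},\ldots,B_m|_{V_\lambda}$ form a commuting family of normal matrices on a strictly smaller space (if $B_1$ is a scalar, simply replace it by some non-scalar $B_j$; if all $B_j$ are scalar, then $\M$ is already diagonal in any orthonormal basis). By the inductive hypothesis, for each $\lambda$ there is a unitary $U_\lambda$ on $V_\lambda$ that simultaneously diagonalizes these restrictions. Assembling the $U_\lambda$'s into a block-diagonal unitary $U$ with respect to the decomposition $\CC^{|X|}=\bigoplus_\lambda V_\lambda$ yields a single unitary simultaneously diagonalizing $B_1,\ldots,B_m$, and hence all of $\M$.

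The main obstacle is the $B_j^*$-invariance of the $V_\lambda$; the polynomial identity $B_j^*=p(B_j)$ bypasses the need to invoke the heavier Fuglede--Putnam theorem and uses only the unitary diagonalizability of a single normal matrix, which is already available from Theorem~\ref{gd}. The rest is the standard inductive bookkeeping and the fact, recorded in Lemma~\ref{ge}, that commuting diagonalizable matrices are simultaneously diagonalizable; the present argument upgrades the similarity in that lemma to a unitary similarity by exploiting normality at every stage.
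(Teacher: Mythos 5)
Your proposal is correct. The paper itself gives no argument at all: its ``proof'' of Theorem~\ref{gf} is a one-line citation of Theorem~\ref{gd}, Lemma~\ref{ge}, and \cite[Subsection~1.3]{HJ}, and in particular it delegates the crucial upgrade from simultaneous \emph{similarity}-diagonalization (which is all Lemma~\ref{ge} provides) to simultaneous \emph{unitary} diagonalization entirely to the Horn--Johnson reference. You instead supply a self-contained induction on the dimension: reduce to a finite commuting family of normal matrices via a basis of $\M$, split $\CC^{|X|}$ into the orthogonal eigenspaces of one non-scalar member, and recurse on each eigenspace. The one step that genuinely needs care --- that each eigenspace $V_\lambda$ of $B_1$ is invariant under $B_j^*$ as well as $B_j$, so that the restrictions are again normal --- you handle by writing $B_j^*=p(B_j)$ via Lagrange interpolation on the spectrum; this is precisely the content of the paper's own Lemma~\ref{on} (stated and proved later, for the $(\Ra)$ direction), so your argument is consistent with the paper's toolkit and avoids Fuglede--Putnam. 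You also correctly dispose of the degenerate case where every basis matrix is scalar. What your route buys is a proof that actually lives inside the paper rather than in an external reference; what the paper's route buys is brevity, at the cost of leaving the unitary refinement of Lemma~\ref{ge} unjustified on the page. Your closing remark that the argument ``upgrades'' Lemma~\ref{ge} is slightly misleading --- your induction never invokes that lemma --- but this is a presentational point, not a gap.
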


\begin{proof}
Immediate from Theorem~\ref{gd}, Lemma~\ref{ge} and \cite[Subsection~1.3]{HJ}.
\end{proof}

Let $\circ$ denote the elementwise--Hadamard product of matrices. We call two $01$-matrices $B$, $C$ {\em disjoint} if $B\circ C=\O$. A basis $\{F_0,F_1,\ldots,F_d\}$ of some subalgebra $\C\subset \Mat_{X}(\CC)$ satisfying all the following properties is known as the {\it standard basis} of $\C$: {\rm (i)} ${\cal F}=\{F_0,F_1(=F),\ldots,F_d\}$ is a set of mutually disjoint $(0,1)$-matrices; {\rm (ii)} the sum of some (respectively all) of these matrices gives $I$ (respectively $J$); {\rm (iii)} for each $i\in \{0,\ldots,d\}$, the transpose of $F_i$ belongs to ${\cal F}$; {\rm (iv)} the vector space spanned by ${\cal F}$ is closed under both ordinary and Hadamard multiplication;  and {\rm  (v)} each $F_i$ is a polynomial (not necessarily of degree $i$) in $F$.

\begin{theorem}[{\cite[Theorem~1]{HMc}}]
\label{Ra}
Let $\G=\G(A)$ denote a digraph with adjacency matrix $A$ and vertex set $X$. Let $\jj$ denote the all-ones vector of order $|X|$ and $J$ all-ones matrix of order $|X|$ (i.e., the matrix every column of which is $\jj$). Then, the following hold.
\begin{enumerate}[label=\rm(\roman*)]
\item There exists a polynomial $h(t)$ such that $J=h(A)$ if and only if $\G$ is strongly connected and $A\jj=A^\top\jj=k\jj$, for some positive integer $k$.
\item The unique polynomial of least degree satisfying $J=h(A)$ is $|X|s(t)/s(k)$, where $(t-k)s(t)$ is the minimal polynomial of $A$ and $k$ is the positive integer such that $A\jj=k\jj$.
\end{enumerate}
\end{theorem}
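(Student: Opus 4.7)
I would split the argument into the two implications of~(i) and then~(ii). For the forward direction of~(i), the key observation is that $J=h(A)$ implies $AJ=Ah(A)=h(A)A=JA$. A direct computation gives $AJ=(A\jj)\jj^\top$ and $JA=\jj(A^\top\jj)^\top$; equating these rank-one matrices forces both $A\jj$ and $A^\top\jj$ to equal a common vector $k\jj$. The scalar $k$ is a nonnegative integer as a row sum of the $(0,1)$-matrix $A$, and is strictly positive since otherwise $A=0$ and $h(A)$ would be a multiple of $I$, not $J$. Strong connectedness then follows: for any $u\ne v$ the entry $J_{uv}=h(A)_{uv}=1$ cannot be realized unless some power $(A^m)_{uv}$ is positive, which means a walk of length $m$ from $u$ to $v$.

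For the backward direction, the plan is to build $h$ explicitly from the minimal polynomial of $A$. Strong connectedness gives that $A$ is irreducible (Lemma~\ref{gb}), so Perron--Frobenius (Theorem~\ref{gc}) identifies the spectral radius as a simple eigenvalue with a strictly positive eigenvector. Since $A\jj=k\jj$ with $\jj>0$, necessarily $k$ equals the Perron eigenvalue and is simple. The minimal polynomial therefore factors as $m(t)=(t-k)s(t)$ with $s(k)\ne 0$, and I would set
$$
h(t):=\frac{|X|\,s(t)}{s(k)}.
$$
From $(A-kI)\,h(A)=\tfrac{|X|}{s(k)}\,m(A)=0$, every column of $h(A)$ lies in the one-dimensional right eigenspace $\ker(A-kI)=\Span\{\jj\}$; symmetrically, $h(A)(A-kI)=0$ together with $\jj^\top A=k\jj^\top$ places every row of $h(A)$ in $\Span\{\jj^\top\}$. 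Hence $h(A)=cJ$ for some scalar $c$, and $h(A)\jj=h(k)\jj=|X|\jj$ combined with $cJ\jj=c|X|\jj$ pins down $c=1$.

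For~(ii), the plan is to use that $\langle A\rangle$ has dimension equal to $\dgr m$ with basis $\{I,A,\ldots,A^{\dgr m-1}\}$, so any two polynomials evaluating to the same element of $\langle A\rangle$ differ by a multiple of $m$. The polynomial $h$ constructed above has degree $\dgr s=\dgr m-1$; if some $h'$ of strictly smaller degree also satisfied $h'(A)=J$, then $m$ would divide $h-h'$, whose degree is strictly less than $\dgr m$, forcing $h'=h$ and contradicting $\dgr h'<\dgr h$.

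The main obstacle is the explicit construction in the middle paragraph: one first needs Perron--Frobenius to force $\ker(A-kI)$ to be one-dimensional, and only then can one recognize that $|X|s(t)/s(k)$ is the correct choice --- it is annihilated by $(A-kI)$ on both sides (because it carries the factor $s$), and evaluates to $|X|$ at the eigenvalue $k$ so that the normalization on $\jj$ works out. Everything else reduces to routine bookkeeping inside the algebra $\langle A\rangle$.
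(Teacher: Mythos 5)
Your proof is correct. Note that the paper does not prove Theorem~\ref{Ra} at all --- it is quoted verbatim from the cited reference (Hoffman--McAndrew) --- and your argument is essentially the classical one from that source: commuting $A$ with $J=h(A)$ to force equal, constant row and column sums; Perron--Frobenius to identify $k$ as the simple spectral radius so that the minimal polynomial factors as $(t-k)s(t)$; the explicit normalization $h(t)=|X|s(t)/s(k)$; and reduction modulo the minimal polynomial for uniqueness and minimality of the degree. All steps check out, so nothing further is needed.
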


\begin{lemma}
\label{on}
Let $A$ denote a complex matrix. Then, $A$ is normal if and only if $\ol{A}^\top = p(A)$ for some polynomial $p\in\CC[t]$.
\end{lemma}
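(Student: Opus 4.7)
The plan is to prove each direction separately, with the forward direction being essentially immediate and the reverse direction relying on the spectral theorem for normal matrices (Theorem~\ref{gd}) together with Lagrange interpolation.

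For the easy direction, suppose $\ol{A}^\top = p(A)$ for some $p \in \CC[t]$. Any polynomial in $A$ commutes with $A$, so $A \cdot \ol{A}^\top = A \cdot p(A) = p(A) \cdot A = \ol{A}^\top \cdot A$, hence $A$ is normal by definition.

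For the harder direction, assume $A$ is normal. By Theorem~\ref{gd}, $A$ is diagonalizable via a unitary matrix, so we can write $A = U D U^*$ where $U$ is unitary and $D = \diag(\lambda_1, \ldots, \lambda_n)$ lists the eigenvalues of $A$ (with multiplicity). Let $\{\mu_1, \ldots, \mu_d\}$ denote the distinct eigenvalues of $A$. The idea is to use Lagrange interpolation to construct a polynomial $p \in \CC[t]$ satisfying $p(\mu_j) = \ol{\mu_j}$ for each $j = 1, \ldots, d$; explicitly,
$$
p(t) = \sum_{j=1}^{d} \ol{\mu_j} \prod_{\substack{k=1 \\ k \ne j}}^{d} \frac{t - \mu_k}{\mu_j - \mu_k}.
$$
Since the $\mu_j$ are distinct, this is well defined. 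Because $p$ agrees with the map $\lambda \mapsto \ol{\lambda}$ on every diagonal entry of $D$, we have $p(D) = \ol{D} = \ol{D}^\top$. Using $U^* = \ol{U}^\top$ and unitarity, a direct computation gives
$$
p(A) = U p(D) U^* = U \, \ol{D}^\top \, U^* = \ol{U D U^*}^{\,\top} = \ol{A}^\top,
$$
as required.

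I do not anticipate a genuine obstacle here: the only subtlety is ensuring the interpolation is performed on the \emph{distinct} eigenvalues of $A$ (so that the denominators $\mu_j - \mu_k$ are nonzero), and checking that the resulting polynomial evaluated on $D$ reproduces $\ol{D}$ entry by entry, which is transparent from the construction.
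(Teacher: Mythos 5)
Your proof is correct and follows essentially the same route as the paper: both directions hinge on the unitary diagonalizability of normal matrices, and your Lagrange interpolation on the distinct eigenvalues is precisely the explicit form of the paper's closing observation that the spectral idempotents $E_i$ lie in $\Span\{A^0,\ldots,A^d\}$, so that $\ol{A}^\top=\sum_i\ol{\lambda_i}E_i$ is a polynomial in $A$. If anything, your version is slightly more self-contained, since the paper asserts that last step without exhibiting the polynomial.
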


\begin{proof}
$(\La)$ Assume that $\ol{A}^\top = p(A)$ for some polynomial $p\in\CC[t]$. Then, $A\ol{A}^\top=Ap(A)=p(A)A=\ol{A}^\top A$, and thus $A$ is a normal matrix.

$(\Ra)$ Assume that $A$ is a normal matrix, and let $\spec(A)=\{[\lambda_0]^{m(\lambda_0)},\ldots,[\lambda_d]^{m(\lambda_d)}\}$ denote the spectrum of $A$. For each eigenvalue $\lambda_i$ $(0\le i\le d)$ let $U_i$ denote the matrix whose columns form an orthonormal basis for the eigenspace $\ker(A-\lambda_i I)$, and note that $\dim(\ker(A-\lambda_i I))=m(\lambda_i)$. Abbreviate $m_i=m(\lambda_i)$ $(0\le i\le d)$. Pick $i$ $(0\le i\le d)$, and note that
$
AU_i=
\lambda_iU_i.
$
If $U=[U_1|U_2|...|U_d]$, then
$$
A=U\Lambda\ol{U}^{\top},
\qquad
\mbox{ where }
\qquad
\Lambda=\left[
\begin{matrix} 
\lambda_{0} I_{m_0} & 0 & ... & 0 \\
0 & \lambda_{1} I_{m_1} & ... & 0 \\
\vdots & \vdots & ~ & \vdots \\
0 & 0& ... & \lambda_{d} I_{m_d}
\end{matrix}
\right].
$$
Now, it is routine to see that
\begin{align*}
A&=U\Lambda\ol{U}^{\top}=
[U_0|U_1|\ldots|U_d]
\left[
\begin{matrix} 
\lambda_0I_{m_0} & 0 & \cdots & 0 \\
0 & \lambda_1I_{m_1} & \cdots & 0 \\
\vdots & \vdots & \ddots & \vdots \\
0 & 0 & \ldots & \lambda_dI_{m_d} \\
\end{matrix}
\right]
\left[
\begin{matrix} 
\underline{\ol{U_0}^{\top}}\\
\underline{\ol{U_1}^{\top}}\\
\underline{\,\,\,\vdots\,\,\,}\\
\ol{U_d}^{\top}\\
\end{matrix}
\right]\\
&=[\lambda_0U_0|\lambda_1U_1|\cdots|\lambda_dU_d]
\left[
\begin{matrix} 
\underline{\ol{U_0}^{\top}}\\
\underline{\ol{U_1}^{\top}}\\
\underline{\,\,\,\vdots\,\,\,}\\
\ol{U_d}^{\top}\\
\end{matrix}
\right]\\
&=\lambda_0U_0\ol{U_0}^{\top}+\lambda_1 U_1\ol{U_1}^{\top}+\cdots+ \lambda_dU_d\ol{U_d}^{\top}\\
&=\lambda_0E_0+\lambda_1E_1+\cdots+\lambda_d E_d\qquad(\mbox{where }E_i:=U_i\ol{U_i}^{\top}).
\end{align*}
Since $\ol{E_i}^\top=E_i$, it follows that $\overline{A}^\top=\ol{\lambda_0} E_0+\ol{\lambda_1} E_1+\cdots+\ol{\lambda_d} E_d$. Note that $\{E_0,E_1,\ldots,E_d\}$ is a basis of $\A=\Span\{A^0,A^1,\ldots,A^d\}$. The result follows.
\end{proof}

\subsection{Commutative association schemes}

In Section~\ref{1A}, we have already provided the definition of a commutative $d$-class association scheme $\XXi=\{X,\{R_i\}_{i=0}^d\}$ together with definitions of relations $\{R_i\}_{i=0}^d$, relation matrices $\{B_i\}_{i=0}^d$ and Bose--Mesner algebra $\M$ of $\XXi$. The meaning of a ``matrix generates $\XXi$'' has been given in Section~\ref{1A} as well. Note that relation matrices $\{B_i\}_{i=0}^d$ form a standard basis of the Bose-Mesner algebra $\M$. We say that the relation $R_i$ {\em generates} the association scheme $\XXi$ if every element of the Bose--Mesner algebra $\M$ of $\XXi$ can be writen as a polynomial in $B_i$, where $B_i$ is adjacency matrix of the (di)graph $(X,R_i)$. We say that the association scheme $\XXi$ is {\em $P$-polynomial} with respect to $B_1$, if it is generated by $B_1$ and there exists an ordering $(B_0,B_1,\ldots,B_d)$ and polynomials $p_j(t)$ of degree $j$, such that $B_j=p_j(B_1)$ $(0\le j\le d)$. In Section~\ref{dA}, we show that if $(B_0,B_1,\ldots,B_d)$ is a $P$-polynomial ordering of $\XXi$ then $\G=\G(B_1)$ is a distance-regular (di)graph (see Proposition~\ref{Oc}). Recall that the association scheme $\XXi$ is {\em polynomial} (with respect to $R_i$) if it is generated by a relation $R_i$ for some $i$ (we recommend articles \cite{MPc,TtYl} for interesting results in that direction).


\begin{proposition}
\label{Ha}
Let $\G=\G(A)$ denote a digraph with adjacency matrix $A$ and vertex set $X$. Assume that $A$ generates the Bose--Mesner algebra $\M$ of a commutative $d$-class association scheme, and let $\{B_0,B_1,\ldots,B_d\}$ denote the standard basis of $\M$. Then, the following hold.
\begin{enumerate}[label=\rm(\roman*)]
\item For any $i$ $(0\le i\le d)$ and $y,z,u,v\in X$, if $(B_i)_{zy}=(B_i)_{uv}=1$ then $\partial(z,y)=\partial(u,v)$.
\item Every distance-$i$ matrix $A_i$ of $\G=\G(A)$ belongs to $\M$, i.e., $A_i\in\M$ $(0\le i\le D)$.
\end{enumerate}
\end{proposition}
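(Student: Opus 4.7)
The plan is to reduce both statements to a single key observation: for any polynomial $p(t)\in\CC[t]$, the entry $(p(A))_{zy}$ depends only on which relation $R_i$ contains the pair $(z,y)$. Since $\M$ is closed under multiplication (it is an algebra by axiom \ref{cb}) and $A\in\M$, every power $A^m$ lies in $\M$, so we may expand $A^m=\sum_{h=0}^d\alpha_h^{(m)}B_h$ for scalars $\alpha_h^{(m)}\in\CC$. Because $\{B_0,\ldots,B_d\}$ is a standard basis, the $B_h$'s are mutually disjoint $(0,1)$-matrices whose supports partition $X\times X$; hence for each fixed pair $(z,y)$ there is a unique index $h=h(z,y)$ with $(B_h)_{zy}=1$, and $(A^m)_{zy}=\alpha_{h(z,y)}^{(m)}$ depends only on that index.

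For part~(i), I would now argue as follows. By Lemma~\ref{hB}, the distance $\partial(z,y)$ is exactly the smallest nonnegative integer $m$ such that $(A^m)_{zy}>0$. If $(B_i)_{zy}=(B_i)_{uv}=1$, then $h(z,y)=h(u,v)=i$, so the previous paragraph gives $(A^m)_{zy}=\alpha_i^{(m)}=(A^m)_{uv}$ for every $m\geq 0$. The smallest $m$ with $\alpha_i^{(m)}>0$ therefore computes both $\partial(z,y)$ and $\partial(u,v)$, proving equality.

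For part~(ii), I would use (i) to define, for each $i\in\{0,1,\ldots,d\}$, a well-defined common value $s(i)\in\{0,1,\ldots,D\}$ equal to $\partial(z,y)$ for any $(z,y)\in R_i$. Setting $I_m=\{i:s(i)=m\}$ for each $m\in\{0,1,\ldots,D\}$, the set $\{(z,y):\partial(z,y)=m\}$ is exactly the disjoint union $\bigcup_{i\in I_m}R_i$, because the $R_i$ partition $X\times X$ and each $R_i$ is entirely contained in one distance class. This immediately yields $A_m=\sum_{i\in I_m}B_i\in\M$ (with the empty sum giving $A_m=\O$, though this case does not occur for $0\le m\le D$ since $\G$ has diameter $D$ and is strongly connected).

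There is no serious obstacle here; the only thing to be careful about is invoking the correct structural facts---namely that $\M$ is an algebra so $A^m\in\M$, and that a standard basis consists of mutually disjoint $(0,1)$-matrices whose supports partition $X\times X$, as was recorded in the definition of standard basis just before the proposition. Everything else is a direct reading of entries.
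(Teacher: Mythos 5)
Your proposal is correct and follows essentially the same route as the paper: expand $A^m=\sum_h\alpha_h^{(m)}B_h$ using that $\M$ is an algebra containing $A$, use disjointness of the standard basis to conclude that $(A^m)_{zy}$ depends only on the relation containing $(z,y)$, deduce (i) by comparing the least $m$ with a nonzero walk count (the paper phrases this as a contradiction, which is the same argument), and obtain (ii) by writing each $A_m$ as the union of the relations whose common distance value is $m$. The only cosmetic difference is that the paper explicitly notes at the outset that $J=p(A)$ forces $\G$ to be regular and strongly connected (via Theorem~\ref{Ra}), which justifies your passing remark that every distance class $0\le m\le D$ is nonempty.
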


\begin{proof}
Since $A$ generates the Bose--Mesner algebra $\M$, and $J\in\M$, there exists a polynomial $p(t)$ such that $J=p(A)$. This implies that $\G$ is regular and strongly connected (see Theorem~\ref{Ra}).

(i) For every $\ell\in\NN$, there exists complex scalars $\alpha^{(\ell)}_i$ $(0\le i\le d)$ such that $A^\ell=\sum_{i=0}^{d} \alpha^{(\ell)}_i B_i$. Recall that $\sum_{i=0}^d B_i = J$ and $B_i\circ B_j=\delta_{ij}B_i$ $(0\le i,j\le d)$. This yields that, for any $y,z,u,v\in X$ and $i$ $(0\le i\le d)$, if $(B_i)_{zy}\ne 0$ and $(B_i)_{uv}\ne 0$ then $(A^\ell)_{zy}=(A^\ell)_{uv}=\alpha^{(\ell)}_i$, i.e., the number of walks of length $\ell$ from $z$ to $y$ is equal to the the number of walks of length $\ell$ from $u$ to $v$ (see Lemma~\ref{hB}). Moreover, $(A^\ell)_{zy}=(A^\ell)_{uv}$ holds for any $\ell$ $(\ell\in\NN)$. To prove the claim, we use the proof by a contradiction, similar as in \cite[Lemma~2.3]{FQpG} where the author has an undirected graph. Assume that $\partial(z,y)>\partial(u,v)=m$. Then, $(A^m)_{uv}\ne 0$ and $(A^m)_{zy}=0$, a contradiction. 
The result follows. 

(ii) From the proof of (i) above it follows that, if $y,z\in X$ are two arbitrary vertices such that $\partial(z,y)=i$, then there exists $B_j$ (for some $0\le j\le d$) such that $(B_j)_{zy}=1$. Recall also that $(A_i)_{zy}=1$. In fact, for such a choice of $j$ and any nonzero $(u,v)$-entry of $B_j$, we have $\partial(u,v)=i$. This yields
$$
A_i=\sum_{j:A_i\circ B_j\ne{\boldsymbol{O}}} B_j
\qquad(0\le i\le D).
$$
The result follows.
\end{proof}

Through the rest of the paper, we use the following notation.

\begin{notation}{\label{dC}}{\rm
Let $\G$ denote a simple strongly connected digraph with vertex set $X$,  diameter $D$, and let $\{A_0,A:=A_1,A_2,\ldots,A_D\}$ denote the distance-$i$ matrices of $\G=\G(A)$. Assume that the adjacency matrix $A$ has $d+1$ distinct eigenvalues, and let $\A$ denote the adjacency algebra of $\G$ (i.e., the algebra generated by the adjacency matrix $A$ with respect to the ordinary matrix multiplication). Pick $x\in X$, and define the sets $\G_i^\ra(x)$ and $\G_j^\la(x)$  $(0\le i,j\le D)$ in the following way
$$
\G^\ra_i(x)=\{z\in X\mid\partial(x,z)=i\}
\qquad\mbox{and}\qquad
\G^\la_j(x)=\{z\in X\mid\partial(z,x)=j\}.
$$
}\end{notation}

\subsection{Results of {\sc Comellas}, {\sc Fiol}, {\sc Gimbert} and {\sc Mitjana}}
\label{zm}

In \cite{CfFgM}, {\sc Comellas}, {\sc Fiol}, {\sc Gimbert} and {\sc Mitjana} had studied digraphs with the following purely combinatorial property: for each non-negative integer $\ell$ $(0\le\ell\le D)$, the number of walks of length $\ell$ in $\G$ between two vertices $x,y\in X$ only depends on $h=\partial(x,y)$ (distance from $x$ to $y$) (we call these graphs weakly distance-regular in sense of {\sc Comellas et al.} \cite{CfFgM}). Among else, the authors gave several algebraic characterizations of these graphs, and they made connection with digraphs that are distance-regular in sense of {\sc Damerell} {\rm \cite{Drm}}.

\begin{proposition}[{\cite[Theorem~2.2]{CfFgM}}]
\label{zi}
Let $\G$ denote a connected digraph with diameter $D$ and distance-$i$ matrices $\{A_i\}_{i=0}^D$. Then the following {\rm (i)--(iv)} are equivalent.
\begin{enumerate}[label=\rm(\roman*)]
\item For each non-negative integer $\ell$ $(0\le\ell\le D)$, the number of walks of length $\ell$ in $\G$ between two vertices $x,y\in X$ only depends on $h=\partial(x,y)$ (distance from $x$ to $y$) ($\G$ is weakly distance-regular in sense of {\sc Comellas et al.}).
\item The distance-$i$ matrix $A_i$ is a polynomial of degree $i$ in the adjacency matrix $A$.
\item The set of distance-$i$ matrices $\{A_i\}_{i=0}^D$ is a basis of the adjacency algebra $\A$ of $\G$.
\item For any two vertices $u,v\in X$ at distance $\partial(u,v)=h$, the numbers $p^h_{ij}(u,v):=|\G_i^\ra(u)\cap\G_j^\la(v)|$ do not depend on the vertices $u$ and $v$, but only on their distance $h$.
\end{enumerate}
\end{proposition}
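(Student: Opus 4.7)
The plan is to establish the cyclic chain of implications $\mathrm{(i)}\Ra\mathrm{(ii)}\Ra\mathrm{(iii)}\Ra\mathrm{(iv)}\Ra\mathrm{(i)}$, using Lemma~\ref{hB} (entries of $A^\ell$ count walks) and Theorem~\ref{Ra} (characterisation of when $J$ is a polynomial in $A$) as the two main tools. The unifying theme is that each of the four statements is just a different rephrasing of the condition $A^\ell\in\Span\{A_0,A_1,\ldots,A_D\}$ for every $\ell$.

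For $\mathrm{(i)}\Ra\mathrm{(ii)}$, Lemma~\ref{hB} lets us read (i) as $A^\ell=\sum_{i=0}^{D} c_i^{(\ell)} A_i$ for $0\le\ell\le D$. Since no walk of length $\ell$ can join a pair at distance greater than $\ell$, we get $c_i^{(\ell)}=0$ whenever $i>\ell$; at the same time a shortest path itself is a walk of length $\ell$ connecting a pair at distance $\ell$, giving $c_\ell^{(\ell)}\ge 1$. Hence the change-of-basis matrix from $(I,A,\ldots,A^D)$ to $(A_0,A_1,\ldots,A_D)$ is lower triangular with positive diagonal, and inverting it writes each $A_i$ as a polynomial of degree exactly $i$ in $A$.

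For $\mathrm{(ii)}\Ra\mathrm{(iii)}$, summing $A_i=p_i(A)$ over $i$ produces $J=\sum_{i=0}^{D}A_i=q(A)$ with $\deg q=D$. Theorem~\ref{Ra}(i) then forces $\G$ to be regular, while Theorem~\ref{Ra}(ii) yields that the minimal polynomial of $A$ has degree at most $D+1$, so $\dim\A\le D+1$. On the other hand, along any shortest path $u_0\to u_1\to\cdots\to u_D$ each pair $(u_0,u_i)$ sits at distance exactly $i$, so every $A_i$ is a nonzero $01$-matrix; since the $A_i$ have pairwise disjoint supports, they form $D+1$ linearly independent elements of $\A$. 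Matching the two dimensions makes $\{A_i\}_{i=0}^{D}$ a basis of $\A$.

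The last two implications both hinge on the entrywise identity
$$
(A_iA_j)_{uv}=\sum_{z\in X}(A_i)_{uz}(A_j)_{zv}=|\G_i^\ra(u)\cap\G_j^\la(v)|.
$$
For $\mathrm{(iii)}\Ra\mathrm{(iv)}$, the fact that $\A$ is an algebra with basis $\{A_h\}$ lets us expand $A_iA_j=\sum_h q_{ij}^h A_h$, so $(A_iA_j)_{uv}=q_{ij}^{\partial(u,v)}$ depends only on $\partial(u,v)$. Conversely, for $\mathrm{(iv)}\Ra\mathrm{(i)}$, the same identity is read backwards as $A_iA_j=\sum_h p_{ij}^h A_h$, so $\Span\{A_0,\ldots,A_D\}$ is closed under matrix multiplication; since it also contains $I=A_0$ and $A=A_1$, it contains the full adjacency algebra $\A$, and therefore every $A^\ell$ is a linear combination of the $A_i$'s. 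By Lemma~\ref{hB}, the number of length-$\ell$ walks between $x$ and $y$ then depends only on $\partial(x,y)$, which is exactly (i).

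I expect the main technical obstacle to be the dimension count in $\mathrm{(ii)}\Ra\mathrm{(iii)}$: one has to combine the ``least-degree'' uniqueness from Theorem~\ref{Ra}(ii) with the linear independence of the distance-$i$ matrices in order to upgrade the inclusion $\A\supseteq\Span\{A_i\}$ into an equality. The remaining steps are essentially a translation between walk-counting, intersection numbers, and membership in $\A$.
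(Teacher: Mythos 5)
Your proof is correct, but note that the paper does not actually prove Proposition~\ref{zi}: it is imported verbatim as \cite[Theorem~2.2]{CfFgM} and used as a black box (e.g.\ in Theorems~\ref{fB} and \ref{fE}), so there is no internal proof to compare against. Your cyclic chain $\mathrm{(i)}\Ra\mathrm{(ii)}\Ra\mathrm{(iii)}\Ra\mathrm{(iv)}\Ra\mathrm{(i)}$ is a sound self-contained replacement, and it is very much in the spirit of the machinery the paper builds for its own characterizations: the triangular system $A^\ell=\sum_{i\le\ell}c_i^{(\ell)}A_i$ with $c_\ell^{(\ell)}>0$ in your step $\mathrm{(i)}\Ra\mathrm{(ii)}$ is the same device as Proposition~\ref{Cg}(i)--(iii); the dimension count via Theorem~\ref{Ra}(ii) in $\mathrm{(ii)}\Ra\mathrm{(iii)}$ parallels Proposition~\ref{eL}(i); and the entrywise identity $(A_iA_j)_{uv}=|\G_i^\ra(u)\cap\G_j^\la(v)|$ driving your last two implications is exactly the computation in Lemma~\ref{dI} and Theorem~\ref{eP}(iii). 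Two small points worth tightening: in $\mathrm{(i)}\Ra\mathrm{(ii)}$ the phrase ``change-of-basis matrix'' is premature, since at that stage you only know the $A_i$ span a space containing the powers of $A$ --- what you actually use, and all you need, is that a triangular system with nonzero diagonal can be solved for $A_\ell$ in terms of $I,A,\ldots,A^\ell$ by induction; and throughout you should read ``connected'' as ``strongly connected,'' since otherwise the distance matrices do not partition $X\times X$ and Theorem~\ref{Ra} does not apply. Neither affects the validity of the argument.
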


\begin{proposition}[{\cite[Proposition~2.6]{CfFgM}}]
\label{zj}
Let $\G$ denote a connected digraph with diameter $D$ and distance-$i$ matrices $\{A_i\}_{i=0}^D$. Assume that {\rm (i)-(iv)} from Proposition~\ref{zi} hold. Then $A$ is a normal matrix if and only if there exists numbers $b_{ij}$ $(0\le i,j\le D)$ such that $|\G^\ra_1(y)\cap\G_j^\ra(x)|=b_{ij}$, for all $x\in X$, $y\in\G^\ra_i(x)$ $(0\le i,j\le D)$ (i.e., $\G$ is a distance-regular digraph in sense of {\sc Damerell} {\rm \cite{Drm}}).
\end{proposition}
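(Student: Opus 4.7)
The plan is to translate the Damerell counting condition into a matrix-algebraic statement, and then combine it with the basis property from Proposition~\ref{zi}(iii) and the characterization of normality provided by Lemma~\ref{on}.

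\textbf{Step 1 (combinatorial $\Leftrightarrow$ algebraic restatement).} I would begin by observing that for any vertices $x,y\in X$ and any $j$ $(0\le j\le D)$,
$$
|\G^{\ra}_1(y)\cap\G^{\ra}_j(x)|=\sum_{z\in X}(A)_{yz}(A_j)_{xz}=(A_jA^\top)_{xy}.
$$
Since $\{A_i\}_{i=0}^D$ is a set of mutually disjoint $(0,1)$-matrices with $\sum_{i=0}^D A_i=J$, a matrix $M$ whose $(x,y)$-entry depends only on $\partial(x,y)$ is automatically of the form $M=\sum_{i=0}^D c_i A_i$. Applied to $M=A_jA^\top$, this yields that the existence of constants $b_{ij}$ with $|\G^{\ra}_1(y)\cap\G^{\ra}_j(x)|=b_{ij}$ for every $y\in\G^{\ra}_i(x)$ is equivalent to
$$
A_jA^\top=\sum_{i=0}^D b_{ij}A_i\qquad(0\le j\le D),
$$
i.e., to $A_jA^\top\in\A$ for every $j$.

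\textbf{Step 2 (reduction to $A^\top\in\A$).} I would then exploit the hypothesis from Proposition~\ref{zi}(iii) that $\{A_i\}_{i=0}^D$ is a basis of the adjacency algebra $\A$, in particular $A_0=I\in\A$. Taking $j=0$ in the equivalent formulation of Step 1 produces $A^\top\in\A$, and conversely if $A^\top\in\A$ then $A_jA^\top\in\A$ for every $j$ since $\A$ is closed under matrix multiplication and $A_j\in\A$. Hence the Damerell condition is equivalent to the single statement $A^\top\in\A$.

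\textbf{Step 3 (normality).} Finally, because $A$ is a real $(0,1)$-matrix, $\overline{A}^\top=A^\top$, and Lemma~\ref{on} asserts precisely that $A$ is normal if and only if $A^\top$ is a polynomial in $A$, that is, if and only if $A^\top\in\A$. Chaining the equivalences from Steps 1--3 yields the proposition.

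The only step that requires care is the first: I must justify that an arbitrary matrix whose entries are constant on the fibres of the function $(x,y)\mapsto\partial(x,y)$ lies in $\mathrm{span}\{A_0,\ldots,A_D\}$. This is immediate from the identities $\sum_i A_i=J$ and $A_i\circ A_j=\delta_{ij}A_i$, but is worth writing out explicitly; everything else reduces to a short bookkeeping exercise using Lemma~\ref{on} and Proposition~\ref{zi}(iii).
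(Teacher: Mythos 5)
Your argument is correct. Note that the paper does not actually prove Proposition~\ref{zj}: it is quoted from \cite[Proposition~2.6]{CfFgM} without proof, so there is no in-paper argument to compare against. Your three steps --- identifying the Damerell counts with the entries of $A_jA^\top$ (the same identity the paper uses in the proof of Theorem~\ref{iB}), reducing the condition to $A^\top\in\A$ via $A_0=I$ together with the basis property of Proposition~\ref{zi}(iii), and then invoking Lemma~\ref{on} --- form a valid, self-contained proof that uses exactly the tools available in the paper.
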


\subsection{One more result that we use later in the paper}
\label{eb}

It is known that the number of connected components of a regular graph is the multiplicity of its valency (see \cite[page~1]{BVM}). For the sake of Section~\ref{qa} of the paper we need a similar result for digraphs. We did not manage to find similar result implicitly or explicitly in literature. Recall that digraph is $k$-regular, if $\G^{\la}_1(x)=\G^{\ra}_1(x)=k$ holds for every $x\in X$.

\begin{lemma}
\label{ea}
Let $\G$ denote a simple strongly connected $k$-regular digraph. Then $k$ is an eigenvalue of $\G$ of algebraic multiplicity $1$.
\end{lemma}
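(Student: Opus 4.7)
The plan is to reduce the statement directly to the Perron--Frobenius theorem (Theorem~\ref{gc}), which is already stated for simple strongly connected digraphs. The argument has two steps: first exhibit $k$ as an eigenvalue, and then identify it with the Perron value $\theta=\max_{\lambda\in\spec(\G)}|\lambda|$.

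First I would note that $k$ is an eigenvalue of $A$. Since $\G$ is $k$-regular, each row of the adjacency matrix $A$ sums to $k$, so $A\jj=k\jj$, where $\jj$ is the all-ones vector. Hence $k\in\spec(\G)$ with eigenvector $\jj$.

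Next I would show $k=\theta$, i.e.\ that $k$ is the largest eigenvalue in absolute value. Take any eigenvalue $\lambda\in\spec(\G)$ with eigenvector $\vv\ne\0$, and let $x\in X$ be a vertex where $|\vv_x|=\max_{y\in X}|\vv_y|$ is attained. Then
$$
|\lambda|\,|\vv_x|=|(A\vv)_x|=\Bigl|\sum_{y\in X}A_{xy}\vv_y\Bigr|\le \sum_{y\in X}A_{xy}|\vv_y|\le |\vv_x|\sum_{y\in X}A_{xy}=k|\vv_x|,
$$
so $|\lambda|\le k$. Combined with $k\in\spec(\G)$, this gives $\theta=k$.

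Finally, since $\G$ is simple and strongly connected, part~(ii) of Theorem~\ref{gc} (Perron--Frobenius) applies and the algebraic multiplicity of $\theta=k$ equals $1$. The only potential subtlety is making sure the hypotheses of Theorem~\ref{gc} are matched exactly (simple and strongly connected), but these are explicit in the statement, so no additional work is needed. No serious obstacle is expected; the lemma is essentially a direct corollary of Perron--Frobenius once $k$ is pinned down as the spectral radius.
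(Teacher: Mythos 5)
Your proof is correct and follows essentially the same route as the paper: both arguments identify $k$ as the spectral radius $\theta$ and then invoke Theorem~\ref{gc}(ii) to get algebraic multiplicity $1$. The only difference is cosmetic --- you establish $|\lambda|\le k$ directly via the maximum-modulus estimate at an entry of largest absolute value, whereas the paper obtains the same bound by observing that $k^{-1}A$ is a stochastic matrix and citing the standard fact that its eigenvalues lie in the closed unit disc; your version is slightly more self-contained.
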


\begin{proof}
A square matrix $M$ is {\em stochastic} if all of its entries are nonnegative, and the entries of each column sum to $1$. By \cite[Subsection~5.6]{mR}, if $M$ is a stochastic matrix, then $1$ is an eigenvalue of $M$; and if $\lambda$ is a (real or complex) eigenvalue of $A$, then $|\lambda|\le 1$. 

In our case, we have that $k^{-1}A$ is a stochastic matrix. It is routine to show that $\eig(\G)$ 
are eigenvalues of $A$ if and only if $k^{-1}\eig(\G)$ are eigenvalues of $k^{-1}A$. It follows that $k=\max_{\lambda\in\eig(\G)}|\lambda|$. By Theorem~\ref{gc}, the algebraic multiplicity of $k$ is equal to $1$.
\end{proof}

\begin{proposition}
\label{ec}
Let $\G$ denote a simple directed $k$-regular graph with adjacency matrix $A$. If $A$ is a diagonalizable matrix, then $\G$ has $\ell$ connected components if and only if the multiplicity of the eigenvalue $k$ is $\ell$.
\end{proposition}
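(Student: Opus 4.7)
The plan is to reduce to the strongly connected case, which is already handled by Lemma~\ref{ea}. The central observation I would prove first is a \emph{key intermediate claim}: every weakly connected component of a $k$-regular digraph is strongly connected. Given this, the algebraic multiplicity count follows immediately from block decomposition, and diagonalizability ensures that algebraic and geometric multiplicities coincide.

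To prove the intermediate claim, let $H$ be a weakly connected component of $\G$. Since no arcs of $\G$ cross between weakly connected components, $H$ is itself $k$-regular. Consider the strongly connected components $S_1,\ldots,S_m$ of $H$ and the corresponding condensation (a DAG on $m$ vertices). Suppose for contradiction $m\ge 2$; then the condensation has a sink, say $S$, meaning there are no arcs from $S$ to $H\setminus S$. Since every vertex of $S$ has out-degree $k$ in $H$ and all outgoing arcs stay inside $S$, the number of arcs with both endpoints in $S$ equals $k|S|$. On the other hand, the total in-degree over the vertices of $S$ is also $k|S|$, which accounts entirely for the internal arcs; therefore no arcs enter $S$ from $H\setminus S$. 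Consequently $S$ and $H\setminus S$ are disjoint in the underlying undirected graph, contradicting the weak connectivity of $H$. Hence $m=1$.

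Now suppose $\G$ has $\ell$ weakly connected components $\G_1,\ldots,\G_\ell$. By the claim each $\G_i$ is strongly connected and $k$-regular, and after a vertex permutation the adjacency matrix becomes block-diagonal, $A\cong\diag(A_1,\ldots,A_\ell)$, where $A_i$ is the adjacency matrix of $\G_i$. The characteristic polynomial factors accordingly as $\chi_A(t)=\prod_{i=1}^\ell \chi_{A_i}(t)$. By Lemma~\ref{ea}, each $\chi_{A_i}$ has $k$ as a root of multiplicity exactly one, so $k$ appears in $\chi_A$ with algebraic multiplicity exactly $\ell$. Since $A$ is diagonalizable, this equals the geometric multiplicity, giving both implications: the number of components determines the multiplicity of $k$, and vice versa.

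The main obstacle is the intermediate claim linking weak and strong connectivity under $k$-regularity; once this is established the algebraic step is routine. Diagonalizability is used only in the very last step to pass between algebraic and geometric notions of multiplicity.
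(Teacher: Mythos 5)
Your proposal is correct and follows essentially the same route as the paper: decompose $A$ into diagonal blocks indexed by the components, apply Lemma~\ref{ea} to each strongly connected $k$-regular block to see that $k$ contributes multiplicity exactly one per component, and invoke diagonalizability to identify algebraic and geometric multiplicities. The one substantive addition is your condensation/sink degree-counting argument showing that every weakly connected component of a $k$-regular digraph is strongly connected --- a fact the paper's proof merely asserts (``note that each component $C_i$ is a strongly connected $k$-regular digraph'') --- and that argument is correct, so your write-up is if anything more complete than the paper's.
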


\begin{proof}
Since $\G$ is a $k$-regular digraph $A\jj=A^\top\jj=k\jj$. For the moment assume that $\G$ has $r$ connected components $C_i$ $(0\le i\le r)$, and permute the index set of the adjacency matrix $A$ in such a way that 
$$
A=\begin{blockarray}{cccccc}
~ & C_1 & C_2 & C_3 & \cdots & C_r \\
\begin{block}{c(ccccc)}
C_1&A_1&\O&\O&\cdots&\O\\
C_2&\O&A_2&\O&\cdots&\O\\
\vdots&\vdots&\vdots&\vdots& ~ &\vdots\\
C_r&\O&\O&\O&\cdots&A_r\\
\end{block}
\end{blockarray}
$$
where $A_i$ is the adjacency matrix of the component $C_i$ $(0\le i\le r)$. Note that each component $C_i$ $(0\le i\le r)$ is a strongly connected $k$-regular digraph. Since $A$ is a diagonalizable matrix, there exists an orthonormal basis $\B$ consisting of the eigenvectors of $A$; let $P$ denote a matrix with columns vectors the elements from $\B$. Then, $A=P\Lambda\ol{P}^\top$ where $\Lambda$ is a diagonal matrix whose entries on the main diagonal are the eigenvalues of $\G$. Moreover, we have
$$
\ol{P}^\top AP=
\begin{blockarray}{cccccc}
~ & C_1 & C_2 & C_3 & \cdots & C_r \\
\begin{block}{c(ccccc)}
C_1&\Lambda_1&\O&\O&\cdots&\O\\
C_2&\O&\Lambda_2&\O&\cdots&\O\\
\vdots&\vdots&\vdots&\vdots& ~ &\vdots\\
C_r&\O&\O&\O&\cdots&\Lambda_r\\
\end{block}
\end{blockarray}
$$
where $\Lambda_i$ $(1\le i\le r)$ is a diagonal matrix whose entries on the main diagonal are the eigenvalues of $A_i$. By Lemma~\ref{ea}, every $\Lambda_i$ has exactly one entry that is equal to $k$ at main diagonal. The result follows.
\end{proof}



\section{Some algebraic properties of distance-$\boldsymbol{i}$ matrices of a strongly connected digraph}
\label{Av}

In this section we prove some claims that we use in the rest of the paper. Some of claims from this section can be found implicitly (or explicitly) in the literature. Without this section our paper is not readable as we want it to be. Also, this section can be used as a ``warm-up'' in working with the structure of a particular family of digraphs, i.e., the family that satisfies conditions (i), (ii) of Problem~\ref{Aa}. Recall that the algebra generated by the adjacency matrix $A$ of $\G$ with respect to the ordinary matrix multiplication is known as {\em adjacency algebra} of $\G$, and this algebra is denoted by $\A$.

\begin{proposition}
\label{eJ}
With reference to {\rm Notation~\ref{dC}}, assume that $A^\top\in\{A_0,A_1,\ldots,A_D\}$. If there exist polynomials $p_i(t)\in\RR[t]$ $(0\le i\le D)$ such that $A_i=p_i(A)$, then the following hold.
\begin{enumerate}[label=\rm(\roman*)]
\item $A$ is a normal matrix.
\item $\dim(\A)=d+1$ and $\A=\Span\{A^0,A^1,\ldots,A^d\}$.
\end{enumerate}
\end{proposition}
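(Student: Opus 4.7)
The key observation is that the two hypotheses combine into a single polynomial identity that already forces normality. Since $A^\top\in\{A_0,A_1,\ldots,A_D\}$, there exists some index $j$ with $A^\top=A_j$, and by hypothesis $A_j=p_j(A)$ for a real polynomial $p_j(t)$. Because $A$ is a $(0,1)$-matrix we have $\ol{A}=A$, hence $\ol{A}^\top=A^\top=p_j(A)$. Applying Lemma~\ref{on} directly yields that $A$ is normal, proving~(i). Alternatively one could verify commutativity with the adjoint by hand via $A\ol{A}^\top=A\,p_j(A)=p_j(A)\,A=\ol{A}^\top A$.

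For~(ii), the argument is the standard minimal-polynomial count once normality is known. By part~(i) and Theorem~\ref{gd}, $A$ is diagonalizable, so its minimal polynomial has no repeated roots, and therefore has degree exactly $d+1$, the number of distinct eigenvalues of $A$. This forces $\{A^0,A^1,\ldots,A^d\}$ to be linearly independent (otherwise some nonzero polynomial of degree $\le d$ would annihilate $A$, contradicting minimality), while $A^{d+1}$, and hence every higher power, lies in their span. Consequently $\A=\Span\{A^0,A^1,\ldots,A^d\}$ and $\dim(\A)=d+1$.

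There is no real obstacle here; the only point requiring a brief check is that Lemma~\ref{on} is stated in terms of the conjugate transpose, but since $A$ is real the condition $A^\top=p_j(A)$ matches the lemma's hypothesis verbatim. Note also that part~(ii) does not require the full strength of the assumption that \emph{every} $A_i$ is a polynomial in $A$; it uses only (i) together with the given spectral information, so the argument decouples cleanly.
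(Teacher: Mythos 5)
Your proof is correct. For part (i) you take essentially the paper's route: the paper picks $h$ with $A^\top=A_h$ and computes $A\ol{A}^\top=Ap_h(A)=p_h(A)A=\ol{A}^\top A$ directly, which is exactly the ``alternative'' verification you mention (and is also the content of the $(\La)$ direction of Lemma~\ref{on} that you invoke); since $A$ is a real $01$-matrix the identification $\ol{A}^\top=A^\top$ is indeed immediate, as you note. For part (ii) your mechanism differs from the paper's: you argue that normality forces diagonalizability, hence a squarefree minimal polynomial of degree exactly $d+1$, from which linear independence of $\{A^0,\ldots,A^d\}$ and the spanning of all higher powers follow. The paper instead constructs the spectral idempotents $E_i=U_i\ol{U}_i^\top$, checks $E_iE_j=\delta_{ij}E_i$ and their linear independence, writes $A^h=\sum_{i=0}^d\lambda_i^hE_i$, and inverts the resulting Vandermonde system to conclude $\Span\{A^0,\ldots,A^d\}=\Span\{E_0,\ldots,E_d\}$. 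The two arguments prove the same standard fact; yours is shorter and more elementary, while the paper's has the side benefit of exhibiting the primitive idempotents explicitly, which it reuses later (e.g.\ in Lemma~\ref{qd}). Your closing observation that (ii) needs only normality, not the full hypothesis that every $A_i$ is a polynomial in $A$, is accurate and consistent with how the paper itself deploys the result.
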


\begin{proof}
(i) Since $A^\top\in\{A_0,A_1,\ldots,A_D\}$ holds by assumption, there exists $h$ $(0\le h\le D)$ such that $A^\top=A_h$. Now we have 
$$
A\ol{A}^\top=AA_h=Ap_h(A)=p_h(A)A=A_hA=\ol{A}^\top A,
$$ 
which yields that $A$ is normal matrix. 

(ii) Since $A$ is a normal matrix, $\CC^{|X|}$ has an orthonormal basis consisting of eigenvectors of $A$ (see, for example, \cite[Chapter~7]{LADr}). For each eigenvalue $\lambda_i$ $(0\le i\le d)$ of $A$, let $U_i$ be the matrix whose columns form an orthonormal basis of the eigenspace $\ker(A-\lambda_iI)$, and define some matrices $E_i$ $(0\le i\le d)$ in the following way 
$$
E_i=U_i\ol{U}_i^\top.
$$ 
They are nonzero since $\trace(E_i)$ is equal to multiplicity of $\lambda_i$ (i.e., $\trace(E_i)=\trace(U_i\ol{U}_i^\top)=\trace{(\ol{U}_i^\top U_i)}=m(\lambda_i)$). From this definition of the $E_i$'s, it follows $E_iE_j=\delta_{ij}E_i$ $(0\le i,j\le d)$, and using that property it is routine to prove that $\{E_i\}_{i=0}^d$ is a linearly independent set (equation $\sum_{i=0}^d \alpha_i E_i=\O$ yields $\alpha_i=0$ for $(0\le i\le d)$). Now, in the same way as in \cite[Lemma~2.2]{SP}, we can prove that $A^h=\sum_{i=0}^d \lambda_i^h E_i$ $(0\le h\le d)$. This system can be written as
$$
\left [
\begin{matrix} 
I\\ A\\ A^2\\ \vdots\\ A^d
\end{matrix}
\right]=
{\underbrace{
\left[
\begin{matrix} 
1 & 1 & \cdots & 1\\
\lambda_0 & \lambda_1 & \cdots & \lambda_d\\
\lambda_0^2 & \lambda_1^2 & \cdots & \lambda_d^2\\
\vdots & \vdots & \, & \vdots \\
\lambda_0^d & \lambda_1^d & \cdots & \lambda_d^d\\
\end{matrix}
\right]}_{=B^\top}}
\left[
\begin{matrix} 
E_0\\ E_1\\ E_2\\ \vdots\\ E_d
\end{matrix}
\right].
$$
where $B$ is a Vandermonde matrix (see, for example, \cite[page 185]{MCm}), which is invertible. The result follows.
\end{proof}

\begin{lemma}
\label{eK}
Let $\G$ denote a simple strongly connected digraph of diameter $2$ with vertex set $X$, and let $\{A_0,A:=A_1,A_2\}$ denote the distance-$i$ matrices of $\G$ with the properties that $A^\top=A_2$ and $A\jj=A_2\jj=k\jj$. Then, there exists a polynomial $p(t)\in\RR[t]$ such that $A_2=p(A)$.
\end{lemma}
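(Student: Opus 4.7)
The plan is very direct and leverages Theorem~\ref{Ra}. Since $\G$ has diameter $2$, every ordered pair $(x,y)\in X\times X$ satisfies exactly one of $\partial(x,y)=0$, $\partial(x,y)=1$, $\partial(x,y)=2$. Translating this into matrix identities on the distance-$i$ matrices gives the key combinatorial identity
$$
A_0+A_1+A_2=J,\qquad\text{i.e.,}\qquad A_2=J-I-A.
$$
So it suffices to show that $J$ itself is a polynomial in $A$; once this is done, we conclude $A_2=h(A)-I-A=p(A)$ with $p(t):=h(t)-1-t\in\RR[t]$.

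To express $J$ as a polynomial in $A$, I would invoke Theorem~\ref{Ra}\rm(i), which states that $J=h(A)$ for some polynomial $h$ precisely when $\G$ is strongly connected and $A\jj=A^\top\jj=k\jj$ for some positive integer $k$. The strong connectivity is assumed in the hypothesis. For the right–left eigenvector condition: we are told $A\jj=k\jj$, and the hypothesis $A^\top=A_2$ together with $A_2\jj=k\jj$ gives $A^\top\jj=k\jj$. Hence both conditions of Theorem~\ref{Ra}\rm(i) hold, so such an $h$ exists.

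There is essentially no obstacle here; the entire argument is a two–line consequence of (a) the diameter-$2$ identity $A_0+A_1+A_2=J$ and (b) Theorem~\ref{Ra}\rm(i) applied with the data provided in the hypothesis. The only thing one should verify explicitly is that $k$ is a positive integer, which is immediate because $A$ is the adjacency matrix of a simple strongly connected digraph and $A\jj=k\jj$ forces $k=|\G_1^{\ra}(x)|$ for every $x\in X$, a nonnegative integer that is positive since otherwise $\G$ would be disconnected. This completes the proof.
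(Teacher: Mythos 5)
Your proof is correct and follows essentially the same route as the paper's: both derive $A^\top\jj=k\jj$ from the hypotheses $A^\top=A_2$ and $A_2\jj=k\jj$, invoke Theorem~\ref{Ra} to obtain $J=h(A)$, and conclude $A_2=h(A)-A-I$ from the diameter-$2$ identity $A_0+A_1+A_2=J$. Your write-up merely makes a couple of the paper's implicit verifications (such as $k$ being a positive integer) explicit.
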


\begin{proof}
Since $A\jj=A^\top\jj=k\jj$, by Theorem~\ref{Ra} it follows that there exists a polynomial $h(t)\in\RR[t]$ such that $J=h(A)$. Now we have $A_2=h(A)-A-I$, and the result follows.
\end{proof}

\begin{proposition}
\label{Cg}
Let $\G$ denote a simple strongly connected digraph with vertex set $X$,  diameter $D$, and let $\{A_0,A:=A_1,A_2,\ldots,A_D\}$ denote the distance-$i$ matrices of $\G$. If there exist complex scalars $p^h_{i1}$ $(0\le i,j,h\le D)$ such that 
\begin{equation}
\label{eG}
A_iA=\sum_{h=0}^D p^h_{i1} A_h
\qquad(0\le i,j\le D),
\end{equation}
then for the scalars $p^j_{i1}$ $(0\le i,j\le D)$ the following hold.
\begin{enumerate}[label=\rm(\roman*)]
\item $p^j_{i1}=0$, if $j>i+1$.
\item $p^j_{i1}\ne 0$, if $j=i+1$.
\end{enumerate}
Moreover, 
\begin{enumerate}[start=3,label=\rm(\roman*)]
\item There exist polynomials $p_i$ with $\deg p_i\le i$ $(0\le i\le D)$ such that $A_i=p_i(A)$.
\item $A_iA_j=A_jA_i$ $(0\le i,j\le d)$.
\end{enumerate}
\end{proposition}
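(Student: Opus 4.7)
My plan is to prove (i)--(iv) in order, extracting from (i) and (ii) a recursion that drives the induction in (iii), with (iv) then immediate. The key structural observation is that the distance-$i$ matrices $\{A_h\}_{h=0}^D$ are $0$-$1$ matrices with pairwise disjoint supports whose sum is $J$, so the given expansion $A_iA=\sum_h p^h_{i1}A_h$ forces $p^j_{i1}=(A_iA)_{xy}$ for every pair $(x,y)\in X\times X$ with $\partial(x,y)=j$. Combinatorially, $(A_iA)_{xy}$ counts vertices $z\in X$ with $\partial(x,z)=i$ and $z\ra y$. The directed triangle inequality $\partial(x,y)\le \partial(x,z)+\partial(z,y)=i+1$ for any such $z$ shows this count is $0$ whenever $j>i+1$, which yields (i).

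For (ii), I would take any pair $x,y$ with $\partial(x,y)=i+1$ (such pairs exist since every distance in $\{0,1,\ldots,D\}$ is realized along any shortest path of length $D$ in a strongly connected digraph), and let $z$ be the penultimate vertex on a shortest $x$-to-$y$ path. Then $\partial(x,z)=i$ and $z\ra y$, so $(A_iA)_{xy}\ge 1$, giving $p^{i+1}_{i1}\ge 1>0$. For (iii), I would then use induction on $i$ with base cases $A_0=I=A^0$ and $A_1=A$. For the inductive step, (i) collapses the defining sum to $A_iA=p^{i+1}_{i1}A_{i+1}+\sum_{h=0}^i p^h_{i1}A_h$, and (ii) lets me invert this relation to obtain
$$
A_{i+1}=\frac{1}{p^{i+1}_{i1}}\left(A_iA-\sum_{h=0}^i p^h_{i1}A_h\right).
$$
Substituting the inductive expressions $A_h=p_h(A)$ with $\deg p_h\le h$ for $0\le h\le i$ produces $A_{i+1}=p_{i+1}(A)$ with $\deg p_{i+1}\le i+1$.

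Finally, (iv) follows at once: since $A_i=p_i(A)$ and $A_j=p_j(A)$ are both polynomials in the same matrix $A$, they commute. No real obstacle arises in this argument; the one conceptual point worth emphasizing is that the disjoint-support structure of the $A_h$'s lets us read off the coefficients $p^h_{i1}$ from a single entry of $A_iA$ (in particular forcing them to be nonnegative integers), which is precisely what unlocks the recursion and allows (ii) to be used as an invertibility statement for $p^{i+1}_{i1}$.
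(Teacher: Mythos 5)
Your proposal is correct and follows essentially the same route as the paper's proof: reading off $p^j_{i1}$ as the entry $(A_iA)_{xy}=|\G_i^\ra(x)\cap\G_1^\la(y)|$ for $\partial(x,y)=j$, using the shortest-path/triangle-inequality structure of the distance partition for (i) and (ii), then inverting the collapsed recursion $A_iA=\sum_{h=0}^{i+1}p^h_{i1}A_h$ by induction for (iii), with (iv) immediate from commuting polynomials in $A$. The only cosmetic differences are that the paper argues (i) by contradiction where you argue directly, and you make explicit the existence of pairs at each distance $i+1$ via the penultimate vertex of a shortest path, which the paper leaves implicit.
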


\begin{proof}
Pick $x\in X$ and consider distance-$i$ partition $\{\G_i^\ra(x)\}_{i=0}^D$ of the graph $\G$. Fix $i$ $(0\le i\le D)$, and note that for any $y\in\G_{j}^\ra(x)$ $(0\le j\le D)$ we have
\begin{align*}
p^j_{i1} &= p^j_{i1} (A_j)_{xy}\\
&=\left( \sum_{h=0}^D p^h_{i1} A_h \right)_{xy}\\
&= (A_iA)_{xy}\\
&=\sum_{z\in X} (A_i)_{xz} (A)_{zy}\\
&=|\G_i^\ra(x)\cap\G_1^\la(y)|.
\end{align*}
We get 
$$
|\G_i^\ra(x)\cap\G_1^\la(y)|=p^{j}_{i1}\qquad (\forall y\in\G^\ra_j(x)).
$$

(i) Our proof is by a contradiction. Assume that, there exist $j>i+1$ such that $p^j_{i1}\ne 0$. This yields that there exists $y\in\G_j^\ra(x)$ such that $\underbrace{|\G_i^\ra(x)\cap\G_1^\la(y)|}_{=p^j_{i1}}\ne 0$, which implies that there exists $z\in\G_i^{\ra}(x)$ such that $\partial(z,y)=1$. This now yields that $y\in\bigcup_{h=0}^{i+1} \G_h^\ra(x)$, i.e., $y\notin\G_{j}^\ra(x)$ (since $j>i+1$), a contradiction.

(ii) Note that for any $y\in\G_{i+1}^\ra(x)$
$$
|\G_i^\ra(x)\cap\G_1^\la(y)|=p^{i+1}_{i1}.
$$
On the other hand, by the structure of distance-$i$ partition $\{\G_i^\ra(x)\}_{i=0}^D$ (see Figure~\ref{01}), the cardinality of the set $\G_i^\ra(x)\cap\G_1^\la(y)$, for any  $y\in\G_{i+1}^\ra(x)$, is at least $1$. Thus, $p^{i+1}_{i1}\ne 0$.

(iii) From claim (i) we have that \eqref{eG} can be written in the following way
\begin{equation}
\label{eH}
A_iA=\sum_{h=0}^{i+1} p^h_{i1} A_h.
\end{equation}
Now the proof is the same as the proof of \cite[Proposition~2.7.1]{BCN}, where the authors considered the case of symmetric association schemes. Since $p^{i+1}_{1i}\ne 0$, $A_{i+1}$ is a linear combination of $A_0,A_1,\ldots,A_i$ and $A_iA$. Using induction on $i$, $A_{i+1}$ can be written as a polynomial of degree $i+1$ in $A$. The result follows.

(iv) From the claim (iii) we have that there exists a sequence $\{p_i(t)\}_{i=0}^D$ of polynomials $p_i(t)\in\RR[t]$ $(0\le i\le d)$ of degree $i$ such that $p_i(A)=A_i$. Note that 
$$
A_iA_j=p_i(A)p_j(A)=|\mbox{some polynomial in $A$}|=p_j(A)p_i(A)=A_jA_i.
$$
The claim (iv) follows.
\end{proof}

\begin{corollary}
\label{eF}
Let $\G$ denote a simple strongly connected digraph with vertex set $X$,  diameter $D$, and let $\{A_0,A:=A_1,A_2,\ldots,A_D\}$ denote the distance-$i$ matrices of $\G$. If there exist complex scalars $p^h_{i1}$ $(0\le i,h\le D)$ such that 
\begin{equation}
\label{ef}
A_iA=\sum_{h=0}^D p^h_{i1} A_h
\qquad(0\le i,j\le D),
\end{equation}
then the following hold.
\begin{enumerate}[label=\rm(\roman*)]
\item The vector space $\Span\{A_0,A_1,\ldots,A_D\}$ is closed with respect to the ordinary matrix multiplication, i.e., there exist complex scalars (known as {\em intersection numbers}) $p^h_{ij}$ $(0\le i,j,h\le D)$ such that 
$$
A_iA_j=\sum_{h=0}^D p^h_{ij} A_h
\qquad(0\le i,j\le D).
$$
\end{enumerate}
Moreover, for the intersection numbers $p^h_{ij}$ $(0\le i,j,h\le D)$ the following hold.
\begin{enumerate}[start=2, label=\rm(\roman*)]
\item $p^h_{ij}=0$, if $h>i+j$.
\item $p^h_{ij}\ne 0$, if $h=i+j$.
\end{enumerate}
\end{corollary}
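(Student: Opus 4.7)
The plan is to bootstrap from Proposition~\ref{Cg}, which already delivers most of what we need: each $A_j=p_j(A)$ for a polynomial $p_j$ of degree at most $j$, and the $A_i$'s commute pairwise. Combined with the hypothesis $A_iA\in\Span\{A_0,\ldots,A_D\}$, these facts yield closure of $\Span\{A_0,\ldots,A_D\}$ under matrix multiplication; parts~(ii) and~(iii) then fall out of short combinatorial calculations on distances.

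For part~(i), I would first promote the one-step multiplication by $A$ into multiplication by any polynomial in $A$. A quick induction on $\ell$ shows that $A_iA^\ell\in\Span\{A_0,\ldots,A_D\}$: the base case $\ell=1$ is the hypothesis, and the inductive step follows by writing $A_iA^{\ell+1}=(A_iA^\ell)A=\sum_h c_h\,A_hA$ and re-applying the hypothesis to each $A_hA$. Invoking $A_j=p_j(A)$ from Proposition~\ref{Cg}(iii), this gives $A_iA_j=A_i\,p_j(A)\in\Span\{A_0,\ldots,A_D\}$. Because $A_0,\ldots,A_D$ are pairwise (entrywise) disjoint $01$-matrices summing to $J$, they are linearly independent, so there exist uniquely determined scalars $p^h_{ij}$ realizing the expansion $A_iA_j=\sum_{h=0}^{D} p^h_{ij}A_h$.

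For parts~(ii) and~(iii), I would read the intersection numbers off entrywise. The $(x,y)$-entry of $A_iA_j$ equals $|\G_i^{\ra}(x)\cap\G_j^{\la}(y)|$, the number of vertices $z$ with $\partial(x,z)=i$ and $\partial(z,y)=j$. Part~(ii) is the triangle inequality: if this count is positive then $\partial(x,y)\le i+j$, so whenever $\partial(x,y)=h>i+j$ the entry vanishes; since $A_h$ is the only distance-matrix with a $1$ at $(x,y)$, comparing entries in the expansion forces $p^h_{ij}=0$. For part~(iii), fix a pair $(x,y)$ with $\partial(x,y)=i+j$ (such a pair exists whenever $i+j\le D$, because any shortest path realizing the diameter has prefixes attaining every value in $\{0,1,\ldots,D\}$). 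Along a shortest directed $x$--$y$ path $x=x_0\to x_1\to\cdots\to x_{i+j}=y$ set $z:=x_i$; then $\partial(x,z)\le i$ and $\partial(z,y)\le j$, and both inequalities must be equalities, since otherwise concatenation would yield a shorter $x$--$y$ walk and contradict $\partial(x,y)=i+j$. Hence $\G_i^{\ra}(x)\cap\G_j^{\la}(y)$ is non-empty, giving $(A_iA_j)_{xy}\ge 1$ and therefore $p^{i+j}_{ij}\ge 1$.

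There is no serious obstacle: the heavy lifting was already done in Proposition~\ref{Cg}. The only subtlety in part~(i) is to avoid asserting that $A^{D+1}$ itself lies in $\Span\{A_0,\ldots,A_D\}$ (which is neither obvious nor needed); we sidestep this by letting multiplication by $A$ act directly on the $A_i$'s via the hypothesis, rather than on pure powers of $A$.
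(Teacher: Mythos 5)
Your proof is correct. Part~(i) is essentially the paper's argument: the paper shows $\A\subseteq\M:=\Span\{A_0,\ldots,A_D\}$ by iterating the hypothesis and $\M\subseteq\A$ via Proposition~\ref{Cg}(iii), while you run the same induction with $A_i$ carried along ($A_iA^\ell\in\M$) and then substitute $A_j=p_j(A)$; these are interchangeable. (Your closing caveat about $A^{D+1}$ is a non-issue either way: $A^{\ell}\in\M$ for all $\ell$ does follow immediately from the hypothesis by the same induction, starting from $A=A_0A$, and this is exactly what the paper uses.) Where you genuinely diverge is in parts~(ii) and~(iii): the paper follows \cite[Proposition~2.7.1]{BCN} and compares degrees in the polynomial identity $p_i(t)p_j(t)=\sum_h p^h_{ij}p_h(t)$, whereas you read $p^{\partial(x,y)}_{ij}=(A_iA_j)_{xy}=|\G_i^{\ra}(x)\cap\G_j^{\la}(y)|$ off entrywise and argue by the triangle inequality plus the existence, for each $h\le D$, of a pair at distance exactly $h$ (realized along a prefix of a diametral geodesic). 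Your route is the natural extension to general $j$ of the entry-counting argument the paper itself uses in Proposition~\ref{Cg}(i),(ii) for the case $j=1$; it is more elementary and self-contained, since it never has to lift an identity of matrices $\sum_h p^h_{ij}p_h(A)=p_i(A)p_j(A)$ back to an identity of polynomials, while the paper's degree comparison is shorter once that polynomial identity is granted. Both establish the claims, including the positivity $p^{i+j}_{ij}\ge 1$ when $i+j\le D$.
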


\begin{proof} From Proposition~\ref{Cg}(iii) we have that there exists a sequence $\{p_i(t)\}_{i=0}^D$ of polynomials $p_i(t)\in\RR[t]$ $(0\le i\le d)$ of degree $i$ such that $p_i(A)=A_i$.

(i) For the moment, let $\M=\Span\{A_0,A_1,\ldots,A_D\}$. By \eqref{ef} we have that $A^\ell\in\M$ for every $\ell\in\NN$, which yields $\A\subseteq\M$. By Proposition~\ref{Cg}(iii), we also have $\M\subseteq\A$. The result follows.

(ii), (iii). The proof is the same as the proof of \cite[Proposition~2.7.1]{BCN}, where the authors considered the case of symmetric association schemes. We have 
$$
p_i(t)p_j(t) = \sum_{h=0}^D p^h_{ij} p_h(t),
$$
so that $p^h_{ij}=0$ for $h>i+j$ and $p^h_{ij}\ne 0$ for $h=i+j$ (comparing polynomials degrees).  
\end{proof}

\medskip
From Corollary~\ref{eF} imminently the following question arises: Under which restriction we have $A_i^\top\in\{A_0,A_1,\ldots,A_D\}$ $(0\le i\le D)$? Can we prove that $A^\top\in \{A_0,A_1,\ldots,A_D\}$ implies $A_i^\top\in\{A_0,A_1,\ldots,A_D\}$ $(0\le i\le D)$? We give an answer in Proposition~\ref{eL}.

\begin{proposition}
\label{eL}
Let $\G$ denote a simple strongly connected digraph with vertex set $X$,  diameter $D$, and let $\{A_0,A:=A_1,A_2,\ldots,A_D\}$ denote the distance-$i$ matrices of $\G$. Assume that properties {\rm (i), (ii)} of {\rm Problem~\ref{Aa}} hold, i.e., assume that there exist complex scalars $p^h_{i1}$ $(0\le i,h\le D)$ such that 
\begin{equation}
\label{eO}
A_iA=\sum_{h=0}^D p^h_{i1} A_h
\qquad(0\le i\le D),
\end{equation}
and that $A^\top\in\{A_0,A_1,\ldots,A_D\}$. Then, the following hold.
\begin{enumerate}[label=\rm(\roman*)]
\item $A$ has $D+1$ distinct eigenvalues.
\item $A_i^\top\in\{A_0,A_1,\ldots,A_D\}$ $(0\le i\le D)$.
\end{enumerate}
\end{proposition}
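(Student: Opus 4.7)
First I would apply Proposition~\ref{Cg}(iii) to produce polynomials $p_i(t)$ with $A_i = p_i(A)$ for each $i$. Combined with the hypothesis $A^\top \in \{A_0,\ldots,A_D\}$, this puts me in the setting of Proposition~\ref{eJ}, which immediately delivers that $A$ is normal and $\dim(\A) = d + 1$, where $d + 1$ is the number of distinct eigenvalues of $A$ (Notation~\ref{dC}). Independently, Corollary~\ref{eF}(i) gives $\A = \Span\{A_0, A_1, \ldots, A_D\}$; since the distance matrices are pairwise disjoint nonzero $01$-matrices whose nonzero positions partition $X \times X$, they are linearly independent, so $\dim \A = D + 1$. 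Comparing the two expressions for $\dim \A$ yields $d+1 = D+1$, which is exactly (i).

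\textbf{Plan for part (ii).} The key observation is that $\A$ is closed under transposition: since $A^\top \in \{A_0,\ldots,A_D\} \subseteq \A$, writing $A^\top = q(A)$ shows that $p(A)^\top = p(A^\top) = p(q(A)) \in \A$ for every polynomial $p$. Applying this with $p = p_i$ gives $A_i^\top \in \A = \Span\{A_0,\ldots,A_D\}$, so I may expand
$$
A_i^\top = \sum_{j=0}^{D} c_{ij}\, A_j.
$$
Evaluating the $(x,y)$-entry gives $(A_i^\top)_{xy} = c_{i,\partial(x,y)}$, and since $A_i^\top$ is a $01$-matrix every $c_{ij}$ lies in $\{0,1\}$. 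Summing the displayed identity over $i$ and using $\sum_i A_i^\top = J^\top = J = \sum_j A_j$ gives $\sum_i c_{ij} = 1$ for every $j$, so each column of the $0/1$ matrix $C = (c_{ij})$ has exactly one entry equal to $1$; this defines a map $\sigma \colon \{0,\ldots,D\} \to \{0,\ldots,D\}$ with $c_{ij} = 1$ iff $i = \sigma(j)$. Consequently $A_i^\top = \sum_{j \in \sigma^{-1}(i)} A_j$, and since $A_i^\top \ne \O$ forces $\sigma^{-1}(i) \ne \emptyset$, the function $\sigma$ is surjective, hence bijective on a finite set. Therefore $|\sigma^{-1}(i)| = 1$ and $A_i^\top = A_{\sigma^{-1}(i)} \in \{A_0,\ldots,A_D\}$, as required.

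\textbf{Main obstacle.} Part (i) is essentially a matching of two independently-known dimension counts, so it is straightforward once the polynomial representation is in hand. The genuinely delicate step is in part (ii): passing from the purely algebraic fact $A_i^\top \in \A$ to the discrete conclusion $A_i^\top \in \{A_0, \ldots, A_D\}$. The argument above works only because $\{A_j\}_{j=0}^{D}$ is a basis of pairwise disjoint $01$-matrices summing to $J$, which forces the coefficients $c_{ij}$ to be $0$ or $1$ and, together with non-vanishing of the $A_i^\top$, lets a simple pigeonhole/bijection argument collapse each expansion to a single summand.
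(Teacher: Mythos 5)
Your proposal is correct and follows essentially the same route as the paper: part (i) is the same dimension comparison ($\dim\A=d+1$ from Proposition~\ref{eJ} versus $\dim\Span\{A_0,\ldots,A_D\}=D+1$, matched via the two inclusions $\M\subseteq\A$ and $\A\subseteq\M$), and part (ii) starts from the same identity $A_i^\top=p_i(A^\top)=p_i(A_h)\in\A$. The only difference is that your closing pigeonhole/bijection argument on the coefficient matrix $(c_{ij})$ spells out in full the step the paper compresses into ``a linearly independent set of $D+1$ $\circ$-idempotents in $\M$ must coincide with $\{A_i\}_{i=0}^D$''; this is a welcome elaboration, not a departure.
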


\begin{proof} From Proposition~\ref{eJ}(i), $A$ is a normal matrix.  Assume that $A$ has $d+1$ distinct eigenvalues.

(i) We show that $D=d$. Consider the vector space $\M=\Span\{A_0,A_1,\ldots,A_D\}$. By Corollary~\ref{eF}(i), $\M$ is closed with respect to the ordinary matrix multiplication. By Proposition~\ref{eJ}, $\dim(\A)=d+1$. Since each of the $A_i$'s can be written as polynomial in $A$, we have $\M\subseteq\A$ and consequently $D\le d$. On the other hand \eqref{eO} yields that $\A\subseteq\M$ and consequently $d\le D$. The result follows.

(ii) In the proof of claim (i), we showed two things (a) $D=d$; and (b) $\A=\Span\{A^0,A^1,\ldots,\break A^D\}=\Span\{A_0,A_1,\ldots,A_D\}$. Since $A^\top\in\{A_i\}_{i=0}^D$, let $h$ denote the index such that $A^\top=A_h$. For any $i$ $(0\le i\le D)$ we have
\begin{align*}
A_i^\top &= (p_i(A))^\top\\
&=p_i(A^\top)\\
&=p_i(A_h)\in\A.
\end{align*}
In the end, since $\{A_0^\top,A_1^\top,\ldots,A_D^\top\}$ is a linearly independent subset of $D+1$ elements of $\A$$(=\M)$, which are also $\circ$-idempotents, $\{A_i\}_{i=0}^D=\{A_i^\top\}_{i=0}^D$ and the result follows.
\end{proof}


\section{Definition of a distance-regular digraph}
\label{Ga}

Let $\G$ denote a graph with diameter $D$. In this section, we give a definition of a distance-regular digraph and we prove some claims that we use in the end of the paper. Most interesting result is in Proposition~\ref{dF}: if $\G$ is a distance-regular digraph, then for any vertex $x$ the partition $\{\G_0^{\la}(x),\G_1^{\la}(x),\ldots,\G_D^{\la}(x)\}$ is equitable, and 
$$
\{\G_0^{\ra}(x),\G_1^{\ra}(x),\ldots,\G_D^{\ra}(x)\}=
\{\G_0^{\la}(x),\G_1^{\la}(x),\ldots,\G_D^{\la}(x)\}.
$$

Motivation for Definition~\ref{jA} is the following Theorem~\ref{pi}.

\begin{theorem}[{\cite[Theorem~1.2]{MPc}}]
\label{pi}
Let $\M$ denote the Bose--Mesner algebra of a commutative $d$-class association scheme $\XXi=(X,\R)$, and $A\in\M$ denote a $01$-matrix. Assume that $\G=\G(A)$ denotes a (strongly) connected (directed) graph. Then the following hold.
\begin{enumerate}[label=\rm(\roman*)]
\item For every vertex $x\in X$, there exists an $x$-distance-faithful intersection diagram (of an equitable partition $\Pi_x$) with $d+1$ cells.
\item The structure of the $x$-distance-faithful intersection diagram (of the equitable partition $\Pi_x$) from {\rm (i)} does not depend on $x$.
\end{enumerate}
\end{theorem}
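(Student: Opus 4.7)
The plan is to exhibit the equitable partition explicitly and read its parameters off from the intersection numbers of $\XXi$. Let $\{B_0, B_1, \ldots, B_d\}$ denote the standard basis of $\M$. Because $A \in \M$ is a $01$-matrix and the $B_i$ are mutually disjoint $01$-matrices with $\sum_i B_i = J$, there is a unique $I \subseteq \{0, 1, \ldots, d\}$ with $A = \sum_{k \in I} B_k$. For each $x \in X$, set
\[
R_i(x) := \{y \in X \mid (B_i)_{xy} = 1\} \qquad (0 \le i \le d),
\]
and let $\Pi_x := \{R_0(x), R_1(x), \ldots, R_d(x)\}$. By \ref{ce}--\ref{cg}, $\Pi_x$ is a partition of $X$ with $R_0(x) = \{x\}$; moreover, the standard computation $(B_i B_i^\top)_{xx} = |R_i(x)|$ together with \ref{cb} yields $|R_i(x)| = p^0_{i\,i'}$ (where $R_{i'} = R_i^\top$), which is nonzero and independent of $x$. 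Hence $\Pi_x$ has exactly $d+1$ nonempty cells, all of $x$-independent cardinality.

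For part (i), I would first argue that $\Pi_x$ refines the distance partition of $\G(A)$ around $x$. Since $\M$ is an algebra, $A^\ell \in \M$ for every $\ell \in \NN$, and hence $A^\ell$ admits an expansion $A^\ell = \sum_{j=0}^d \alpha_j^{(\ell)} B_j$. Consequently $(A^\ell)_{xy}$ depends only on the index $i$ with $(x,y) \in R_i$, and by Lemma~\ref{hB} all vertices of $R_i(x)$ share the same distance $h_i := \min\{\ell \mid \alpha_i^{(\ell)} > 0\}$ from $x$. Thus $R_i(x) \subseteq \G^\ra_{h_i}(x)$ and $\Pi_x$ is $x$-distance-faithful. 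For equitability, pick $y \in R_j(x)$ and count $|\G^\ra_1(y) \cap R_h(x)|$: this is the number of $z \in X$ with $(x,z) \in R_h$ and $(y,z) \in R_k$ for some $k \in I$. Passing from $(y,z) \in R_k$ to $(z,y) \in R_{k'}$ via the transpose correspondence granted by property~\ref{cb}'s analogue for transposes, and invoking the intersection numbers, this count evaluates to $\sum_{k \in I} p^j_{h\,k'}$, which depends only on $h$, $j$, and $I$. An analogous computation handles the in-neighbour counts $|\G^\la_1(y) \cap R_h(x)|$. Thus $\Pi_x$ is an $x$-distance-faithful equitable partition with $d+1$ cells.

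For part (ii), the structural rigidity comes for free: the cell sizes $k_i := p^0_{i\,i'}$, the assignment $i \mapsto h_i$, the edge-counts $\sum_{k \in I} p^j_{h\,k'}$, and the set $I$ itself are all data intrinsic to the pair $(\XXi, A)$, with no reference to the base point $x$. Hence the intersection diagram of $\Pi_x$ has the same combinatorial structure for every $x \in X$.

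The main obstacle is a clean justification of the distance-faithfulness step: one must ensure that the relations of $\XXi$ do not split across level sets of the distance function of $\G(A)$. This is where the algebra closure of $\M$ under ordinary multiplication is essential, because it forces every power of $A$ to decompose in the standard basis $\{B_0, \ldots, B_d\}$; note that this argument does not require $A$ to generate $\M$, only $A \in \M$. A secondary subtlety is the bookkeeping with transposes in the intersection-number identity, since $A$ need not be symmetric and so the out-neighbour computation must be translated through the correspondence $k \mapsto k'$ to be expressed in the usual intersection-number convention.
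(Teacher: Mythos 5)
Your proposal is correct, but there is nothing in this manuscript to compare it against: Theorem~\ref{pi} is quoted from \cite[Theorem~1.2]{MPc} and the paper supplies no proof of it. Your argument is the natural self-contained one and all the steps check out: the cells $R_i(x)$ are pairwise disjoint, cover $X$, and are nonempty of constant size $p^0_{i\,i'}>0$ because each relation of the scheme is nonempty; the distance-faithfulness follows since $A^\ell\in\M$ forces $(A^\ell)_{xy}$ to be constant on each relation, so by Lemma~\ref{hB} the quantity $h_i=\min\{\ell\mid\alpha_i^{(\ell)}>0\}$ (which exists by strong connectivity) is the common distance from $x$ to every vertex of $R_i(x)$; and the counts $|\G^\ra_1(y)\cap R_h(x)|=\sum_{k\in I}p^j_{h\,k'}$ and $|\G^\la_1(y)\cap R_h(x)|=\sum_{k\in I}p^j_{h\,k}$ for $y\in R_j(x)$ are exactly the two conditions in the paper's definition of an equitable partition, with all parameters expressed purely in the data of $(\XXi,A)$, which gives (ii). Two cosmetic remarks: the transpose-closure you invoke is property (AS3), not an ``analogue of \ref{cb}'' (which is (AS4)); and it is worth stating explicitly that $\Pi_x$ refines the distance partition because both are partitions of $X$ and each $R_i(x)$ lies inside a single $\G^\ra_{h_i}(x)$, so each distance class is a union of cells.
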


The following result of {\sc Damerell} is well known.
\begin{lemma}[{\cite[Theorem~3]{Drm}}]
\label{PI}
Let $\G$ denote a strongly connected digraph with vertex set $X$. Assume that there exists numbers $b_{ij}$ $(0\le i,j\le D)$ such that $|\G^\ra_1(y)\cap\G_j^\ra(x)|=b_{ij}$, for all $x\in X$, $y\in\G^\ra_i(x)$ $(0\le i,j\le D)$ (i.e., $\G$ is a distance-regular digraph in sense of {\sc Damerell} {\rm \cite{Drm}}). Then $(X,\{R_i\}_{i=0}^D)$ is a commutative association scheme, where $\{R_i\}_{i=0}^D$ denotes a partition of $X\times X$ with $R_i=\{(x,y)\in X\times X\mid (A_i)_{xy}=1\}$ $(0\le i\le D)$. 
\end{lemma}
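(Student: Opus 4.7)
The plan is to translate Damerell's condition into the assertion that, around every vertex $x$, the distance partition is equitable with an $x$-independent quotient matrix $B$, and then to show that this forces the distance-$i$ matrices $A_i$ to be polynomials in $A$ and to form an association scheme. Concretely, with $e_i^x$ denoting the characteristic vector of $\G^\ra_i(x)$, the Damerell condition $|\G^\ra_1(y)\cap\G^\ra_i(x)|=b_{ji}$ for all $y\in\G^\ra_j(x)$ reads
$$
A\,e_i^x=\sum_{j=0}^D b_{ji}\,e_j^x,
$$
so the subspace $W_x=\Span\{e_0^x,\ldots,e_D^x\}$ is $A$-invariant with quotient matrix $B=(b_{ji})$ independent of $x$. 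Iterating gives the formula $(A^\ell)_{yx}=\bigl(A^\ell e_0^x\bigr)_y=(B^\ell)_{\partial(x,y),\,0}$, and running the same argument with the equitable partition around $y$ (so that $\delta_y=e_0^y$) yields the companion formula $(A^\ell)_{xy}=(B^\ell)_{\partial(y,x),\,0}$.

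I would then extract distance-symmetry. By Lemma~\ref{hB}, $\partial(y,x)=\min\{\ell\ge 0\mid(A^\ell)_{yx}>0\}$, so the first formula shows that $\partial(y,x)$ depends only on $\partial(x,y)$; the second formula gives the reverse dependence, so there exists a bijection $\sigma\colon\{0,1,\ldots,D\}\to\{0,1,\ldots,D\}$ with $\partial(y,x)=\sigma(\partial(x,y))$. Hence $A_i^\top=A_{\sigma^{-1}(i)}\in\{A_0,\ldots,A_D\}$, which is axiom (AS3); axioms (AS1) and (AS2) are immediate from the definition of the distance relations $R_i$.

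To close the argument I would show that every $A_i$ is a polynomial in $A$. The identity for $(A^\ell)_{yx}$ rewrites globally as $A^\ell=\sum_{i=0}^D(B^\ell)_{i,0}\,A_i^\top$, and by (AS3) this sits in $\Span\{A_0,\ldots,A_D\}$; hence the adjacency algebra satisfies $\A\subseteq\Span\{A_0,\ldots,A_D\}$ and $\dim\A\le D+1$. Conversely, picking $(x,y)$ with $\partial(x,y)=D$ makes $(A^D)_{xy}>0$ while $(A^\ell)_{xy}=0$ for $\ell<D$, forcing $I,A,\ldots,A^D$ to be linearly independent, so $\dim\A=D+1$ and $\{A_0,\ldots,A_D\}$ is a basis of $\A$. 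Each $A_i$ is then a polynomial $p_i(A)$, so $A_iA_j=p_i(A)p_j(A)$ is a polynomial in $A$, hence a linear combination of the $A_h$'s (axiom (AS4)), and two polynomials in $A$ commute, giving (AS5). In my view the main obstacle is the distance-symmetry step: without showing that $\partial(y,x)$ is determined by $\partial(x,y)$, one cannot assert that $A_i^\top$ is a single $A_j$, so the span of the distance-$i$ matrices need not be closed under transposition; the trick is to observe that the equitable partition around $x$ already pins down the entire walk-count sequence $\bigl((A^\ell)_{yx}\bigr)_{\ell\ge 0}$ as a function of $\partial(x,y)$ alone, and $\partial(y,x)$ is then simply the smallest index at which this sequence becomes positive.
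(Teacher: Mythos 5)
Your proof is correct, and it is worth noting that the paper itself offers no proof of this lemma: it is quoted verbatim from Damerell, and the closest the paper comes to establishing it is the chain Theorem~\ref{iB} (Characterization~G) $\Rightarrow$ Theorem~\ref{dJ} (Characterization~A), which works with the matrix identities $A_iA^\top=\sum_{\ell}s^{\ell}_{i1}A_\ell$ and $AA_i=\sum_{\ell}r^{\ell}_{1i}A_\ell$ and then feeds them into Propositions~\ref{nb} and \ref{nd}. Your route is genuinely different and more self-contained: you encode Damerell's condition as the $A$-invariance of $W_x=\Span\{e_0^x,\ldots,e_D^x\}$ with an $x$-independent quotient matrix $B$, iterate to get $(A^\ell)_{yx}=(B^\ell)_{\partial(x,y),0}$, and extract both the distance-symmetry $\partial(y,x)=\sigma(\partial(x,y))$ (hence (AS3)) and the expansion $A^\ell=\sum_i(B^\ell)_{i,0}A_i^\top$ (hence $\A=\Span\{A_0,\ldots,A_D\}$, (AS4), (AS5)) from a single identity. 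All the individual steps check out: the lower bound $\dim\A\ge D+1$ via pairs at each distance is the standard argument, the two companion formulas do pin down $\sigma$ as an involution (so a fortiori a bijection), and the structure constants are automatically the non-negative counting numbers because the $A_h$ are disjoint $01$-matrices. What your approach buys is a proof that never needs normality of $A$ or the predistance machinery as an input; it is essentially the walk-counting argument of Comellas et al.\ (cf.\ Proposition~\ref{zi}) combined with the observation that equitability around \emph{every} vertex, not just one, is what forces $\partial(y,x)$ to be a function of $\partial(x,y)$ and hence closes $\Span\{A_0,\ldots,A_D\}$ under transposition. The one point worth spelling out more explicitly in a final write-up is that every distance $i\in\{0,\ldots,D\}$ is realized by some pair (take intermediate vertices on a geodesic of length $D$), so that $\sigma$ is defined on all of $\{0,\ldots,D\}$; this is immediate but is the reason the two companion formulas compose to the identity on the whole index set.
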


\begin{definition}
\label{jA}{\rm
With reference to {\rm Notation~\ref{dC}}, pick $x\in X$. We say that strongly connected digraph $\G$ is {\em distance-regular around $x$} (or {\em distance-regularized around $x$}) if there are integers $d^\ra_{ij}(x)$, $d^\la_{ij}(x)$ $(0\le i,j\le D)$ which satisfy
$$
|\G_1^{\ra}(y)\cap\G^\ra_j(x)|=d^\ra_{ij}(x) \qquad(0\le i,j\le D)
$$
and
$$
|\G_1^{\la}(y)\cap\G^\ra_j(x)|=d^\la_{ij}(x) \qquad(0\le i,j\le D)
$$
for any $y\in\G_i^\ra(x)$.
}\end{definition}

\medskip
From Definition~\ref{jA}, note that $\G$ is directed distance-regular around $x$, if for each $y\in\G^\ra_i(x)$ $(0\le i\le D)$, the numbers
\begin{equation}
\label{jB}
d^\ra_{ij}(x) := |\G_1^{\ra}(y)\cap\G^\ra_j(x)|
\quad\mbox{ and }\quad
d^\la_{ij}(x) := |\G_1^{\la}(y)\cap\G^\ra_j(x)|
\qquad(0\le i,j\le D)
\end{equation}
depend on the index $j$ and the distance $i$ from $x$ to $y$, but not on the choice of $y\in\G_i^\ra(x)$. Moreover, the definition of distance-regular digraph around $x$ is equivalent to the following claim: for the vertex $x$, the distance-$i$ partition 
$$
\{\G^\ra_0(x),\G^\ra_1(x),\ldots,\G^\ra_D(x)\}
$$
of $X$ is equitable with corresponding parameters $d^\ra_{ij}(x)$ and $d^\la_{ij}(x)$ (see Figure~\ref{01}).

{\small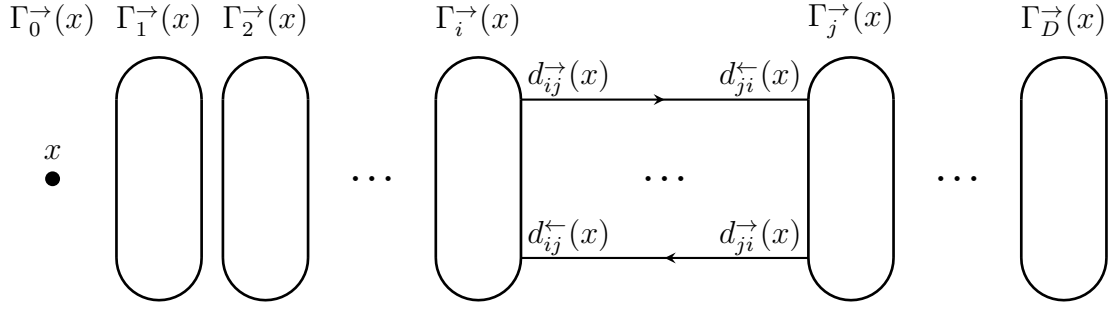
\begin{figure}[t]{\rm
\begin{center}
\begin{tikzpicture}[scale=0.7]
\draw [line width=1pt] (1.2,5)-- (1.2,2);
\draw [line width=1pt] (2.8,5)-- (2.8,2);
\draw [shift={(2,5)},line width=1pt]  plot[domain=0:3.141592653589793,variable=\t]({1*0.8*cos(\t r)+0*0.8*sin(\t r)},{0*0.8*cos(\t r)+1*0.8*sin(\t r)});
\draw [shift={(2,2)},line width=1pt]  plot[domain=3.141592653589793:6.283185307179586,variable=\t]({1*0.8*cos(\t r)+0*0.8*sin(\t r)},{0*0.8*cos(\t r)+1*0.8*sin(\t r)});
\draw [shift={(4,5)},line width=1pt]  plot[domain=0:3.141592653589793,variable=\t]({1*0.8*cos(\t r)+0*0.8*sin(\t r)},{0*0.8*cos(\t r)+1*0.8*sin(\t r)});
\draw [shift={(4,2)},line width=1pt]  plot[domain=3.141592653589793:6.283185307179586,variable=\t]({1*0.8*cos(\t r)+0*0.8*sin(\t r)},{0*0.8*cos(\t r)+1*0.8*sin(\t r)});
\draw [line width=1pt] (3.2,5)-- (3.2,2);
\draw [line width=1pt] (4.8,5)-- (4.8,2);
\draw [line width=1pt] (7.2,5)-- (7.2,2);
\draw [line width=1pt] (8.8,5)-- (8.8,2);
\draw [line width=1pt] (14.2,5)-- (14.2,2);
\draw [line width=1pt] (15.8,5)-- (15.8,2);
\draw [line width=1pt] (18.2,5)-- (18.2,2);
\draw [line width=1pt] (19.8,5)-- (19.8,2);
\draw [shift={(8,5)},line width=1pt]  plot[domain=0:3.141592653589793,variable=\t]({1*0.8*cos(\t r)+0*0.8*sin(\t r)},{0*0.8*cos(\t r)+1*0.8*sin(\t r)});
\draw [shift={(15,5)},line width=1pt]  plot[domain=0:3.141592653589793,variable=\t]({1*0.8*cos(\t r)+0*0.8*sin(\t r)},{0*0.8*cos(\t r)+1*0.8*sin(\t r)});
\draw [shift={(19,5)},line width=1pt]  plot[domain=0:3.141592653589793,variable=\t]({1*0.8*cos(\t r)+0*0.8*sin(\t r)},{0*0.8*cos(\t r)+1*0.8*sin(\t r)});
\draw [shift={(19,2)},line width=1pt]  plot[domain=3.141592653589793:6.283185307179586,variable=\t]({1*0.8*cos(\t r)+0*0.8*sin(\t r)},{0*0.8*cos(\t r)+1*0.8*sin(\t r)});
\draw [shift={(15,2)},line width=1pt]  plot[domain=3.141592653589793:6.283185307179586,variable=\t]({1*0.8*cos(\t r)+0*0.8*sin(\t r)},{0*0.8*cos(\t r)+1*0.8*sin(\t r)});
\draw [shift={(8,2)},line width=1pt]  plot[domain=3.141592653589793:6.283185307179586,variable=\t]({1*0.8*cos(\t r)+0*0.8*sin(\t r)},{0*0.8*cos(\t r)+1*0.8*sin(\t r)});
\node at (6,3.5) {$\boldsymbol\ldots$};
\node at (11.5,3.5) {$\boldsymbol\ldots$};
\node at (17,3.5) {$\boldsymbol\ldots$};
\node at (0,6.5) {$\G^{\ra}_0(x)$};
\node at (2,6.5) {$\G^{\ra}_1(x)$};
\node at (4,6.5) {$\G^{\ra}_2(x)$};
\node at (8,6.5) {$\G^{\ra}_{i}(x)$};
\node at (15,6.5) {$\G^{\ra}_{j}(x)$};
\node at (19,6.5) {$\G^{\ra}_{D}(x)$};
\node at (0,4) {$x$};
\fill (0,3.5) circle [radius=0.14];

\draw[line width=.8pt,draw=black,postaction={on each segment={mid arrow=black}}] (8.8,5) -- (14.2,5);
\draw[line width=.8pt,draw=black,postaction={on each segment={mid arrow=black}}] (14.2,2) -- (8.8,2);
\node at (9.7,5.4) {$d^\ra_{ij}(x)$};
\node at (9.7,2.4) {$d^\la_{ij}(x)$};
\node at (13.3,2.4) {$d^\ra_{ji}(x)$};
\node at (13.3,5.4) {$d^\la_{ji}(x)$};

\end{tikzpicture}
\caption{\rm 
Illustration for the distance-$i$ diagram of a distance-regular digraph around $x$. 
}
\label{01}
\end{center}
}\end{figure}}

\medskip
Using Definition~\ref{jA}, we are now ready to give a definition of a distance-regular digraph.

\begin{definition}\label{dD}{\rm
With reference to {\rm Notation~\ref{dC}}, a strongly connected digraph $\G$ is called {\em distance-regular} if the following (i), (ii) hold.
\begin{enumerate}[label=\rm(\roman*)]
\item For every vertex $x\in X$, the distance-$i$ partition 
$$
\{\G^\ra_0(x),\G^\ra_1(x),\ldots,\G^\ra_D(x)\}
$$
of $X$ is equitable (i.e., $\G$ is a distance-regular digraph around every $x$).
\item The corresponding parameters of the equitable partition from {\rm (i)} do not depend on the choice of the vertex $x$ (i.e., with reference to \eqref{jB}, there exists integers $d^\ra_{ij}$ and $d^\la_{ij}$ $(0\le i,j\le D)$ such that $d^\ra_{ij}(x)=d^\ra_{ij}$ and $d^\la_{ij}(x)=d^\la_{ij}$ for every vertex $x\in X$).
\end{enumerate}
}\end{definition}

From Definition~\ref{dD}, note that a strongly connected digraph $\G$ is  distance-regular, if for each $x\in X$ and $y\in\G^\ra_i(x)$ $(0\le i\le D)$, the numbers
$$
d^\ra_{ij} := |\G_1^{\ra}(y)\cap\G^\ra_j(x)|
\quad\mbox{ and }\quad
d^\la_{ij} := |\G_1^{\la}(y)\cap\G^\ra_j(x)|\qquad(0\le i,j\le D)
$$
depend on the index $j$ and the distance $i$ from $x$ to $y$, but not on the choice of vertices $x\in X$ and $y\in\G_i^\ra(x)$.

\begin{lemma}
\label{dE}
With reference to {\rm Notation~\ref{dC}}, let $\G$ denote a distance-regular digraph. Pick $i$ $(0\le i\le D)$, $x\in X$, $y\in\G_i^{\ra}(x)$, and let $h:=\partial(y,x)$. Then, for every $z\in\G_i^{\ra}(x)$, $\partial(z,x)=h$.
\end{lemma}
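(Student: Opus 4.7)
The strategy is to exploit the equitable partition $\{\G_0^\ra(x),\G_1^\ra(x),\ldots,\G_D^\ra(x)\}$ around $x$ to show that, for any vertex $w\in\G_i^\ra(x)$, the entire ``profile of walks back to $x$'' depends only on the cell index $i$. Once that is in hand, $\partial(w,x)$ — being the smallest length of a walk from $w$ to $x$ — is also a function of $i$ alone, which is exactly what the lemma asks.

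Concretely, I would introduce the $(D+1)\times(D+1)$ quotient matrix $B$ of the equitable partition with $B_{ij}=d^\ra_{ij}$ (the number of out-neighbors in $\G_j^\ra(x)$ from any vertex of $\G_i^\ra(x)$). The central claim is then: for every $w\in\G_i^\ra(x)$ and every $\ell\ge 0$, the number of directed walks of length $\ell$ from $w$ to $x$ equals $(B^\ell)_{i,0}$; in particular, it is independent of the choice of $w\in\G_i^\ra(x)$. I would prove this by a straightforward induction on $\ell$. The base case $\ell=0$ is immediate, since $\G_0^\ra(x)=\{x\}$ forces the walk count to be $1$ if $i=0$ and $0$ otherwise, matching $(I)_{i,0}$. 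For the inductive step, I would partition the length-$\ell$ walks from $w$ according to which cell $\G_k^\ra(x)$ contains the first step: by the equitable partition there are $d^\ra_{ik}=B_{i,k}$ such first steps, and each resulting vertex in $\G_k^\ra(x)$ admits exactly $(B^{\ell-1})_{k,0}$ walks of length $\ell-1$ into $x$ by induction, so summing over $k$ yields $(B^\ell)_{i,0}$.

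Given the claim, the conclusion is immediate: for any $y,z\in\G_i^\ra(x)$,
\[
\partial(y,x) \;=\; \min\{\ell\ge 0 \mid (B^\ell)_{i,0}>0\} \;=\; \partial(z,x),
\]
and in particular both equal $h=\partial(y,x)$. I do not anticipate any real obstacle; the whole argument is a textbook walk-counting induction powered by the equitable partition. The only mild point of care is keeping the edge orientations straight (the quotient matrix $B$ records \emph{out}-neighbor counts, to match the direction of the walk from $w$ to $x$), which is already fixed by the convention in Definition~\ref{jA} that $d^\ra_{ij}$ counts out-neighbors.
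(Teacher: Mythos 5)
Your proof is correct, but it takes a genuinely different route from the paper. You set up the quotient matrix $B$ of the equitable partition around $x$ and prove by induction on $\ell$ that the number of length-$\ell$ walks from any $w\in\G_i^{\ra}(x)$ to $x$ equals $(B^\ell)_{i,0}$ (equivalently, $A^\ell S=SB^\ell$ for the characteristic matrix $S$ of the partition), and then read off $\partial(w,x)$ as the least $\ell$ with $(B^\ell)_{i,0}>0$. The paper instead argues by induction on $h=\partial(y,x)$ itself: it takes a shortest path from $y$ to $x$, applies the inductive hypothesis to the cells met along that path, and uses equitability of the out-parameters $d^{\ra}_{ij}$ to transfer the presence (and absence) of neighbours in those cells from $y$ to every other $z\in\G_i^{\ra}(x)$, concluding $\partial(z,x)=h$. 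Both arguments use only the out-neighbour counts of the partition around the single vertex $x$, so neither needs the full strength of distance-regularity. Your version actually establishes more than the lemma asks — the entire walk profile $(A^\ell)_{wx}$, $\ell\ge 0$, is constant on each cell, which foreshadows the walk-based characterization in Section~\ref{hA} — at the cost of introducing the quotient-matrix formalism; the paper's version is more elementary and purely combinatorial, staying at the level of neighbours and shortest paths. The only point to make explicit in your write-up is the (true, but worth stating) identification of $\partial(w,x)$ with the least length of a \emph{walk} from $w$ to $x$, which holds because a shortest walk is a path and the minimum exists by strong connectivity.
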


\begin{proof}
We use mathematical induction on $h$.

{\sc Basis of induction.} Case $h=0$ yields $i=0$, and the result follows. Note that $h\ne 0$ implies $i\ne 0$. Let $h=1$, i.e., pick $y\in\G_i^{\ra}(x)$ and assume that $\partial(y,x)=1$. Since the distance-$i$ partition is equitable, every vertex from $\G_i^\ra(x)$ has exactly $d_{i0}^\ra$ neighbours in $\G_0^\ra(x)=\{x\}$. To be more precise, $d_{i0}^\ra=|\G^\ra_1(z)\cap \G_0^\ra(x)|=1$ for any $z\in\G_i^\ra(x)$. The result follows.

{\sc Step of induction.} Assume now that, for any $h$ ($0\le h\le m-1$, where $m\ge 2$), the following claim holds:
$$
\mbox{if $y\in\G_i^{\ra}(x)$ and $\partial(y,x)=h$, then for every $z\in\G_i^{\ra}(x)$ $\partial(z,x)=h$ holds.}
$$
In the next few lines, we prove that our claim holds also for $m$. So, assume that $y\in\G_i^{\ra}(x)$ and $\partial(y,x)=m$. Since $\partial(y,x)=m$, there exists a shortest path $[y,w_1,\ldots,w_{m-1},w_m:=x]$ of length $m$ such that $w_j\in\G^\ra_{\ell_j}(x)$ and $\partial(w_j,x)=m-j$ $(1\le j\le m)$ with $\G^\ra_{\ell_j}(x)\ne\G_i^\ra(x)$ $(1\le j\le m-1)$. By induction assumption, for every $z\in\G^\ra_{\ell_j}(x)$ we have $\partial(z,x)=m-j$ $(1\le j\le m)$. Note that $y$ does not have any neighbours in $\bigcup_{j=2}^m \G_{\ell_j}$ (otherwise the distance from $y$ to $x$ will be less than $m$, a contradiction). 
Now, since the distance-$i$ partition is equitable, there does not exist $z\in\G_i^\ra(x)$ which has a neighbour in $\bigcup_{j=2}^m \G_{\ell_j}$ (otherwise $y$ would have a neighbour in $\bigcup_{j=2}^m \G_{\ell_j}$, a contradiction). In the end, since $\partial(y,w_1)=1$, $y\in\G_i^\ra(x)$, $w_1\in\G^\ra_{\ell_1}(x)$, and having equitable distance-$i$ partition, we can conclude that every vertex $z\in\G_i^\ra(x)$ has exactly $d_{i\ell_1}^\ra>0$ neighbours in $\G_{\ell_1}^\ra(x)$. We got that the distance from an arbitrary $z\in\G_i^\ra(x)$ to $x$ is equal to $m$. The result follows.
\end{proof}

\begin{proposition}
\label{dF}
Let $\G$ denote a distance-regular digraph. For any $x\in X$, the partition 
$$
\{\G_0^{\la}(x),\G_1^{\la}(x),\ldots,\G_D^{\la}(x)\}
$$ is equitable. Moreover,
$$
\{\G_0^{\ra}(x),\G_1^{\ra}(x),\ldots,\G_D^{\ra}(x)\}=
\{\G_0^{\la}(x),\G_1^{\la}(x),\ldots,\G_D^{\la}(x)\}.
$$
\end{proposition}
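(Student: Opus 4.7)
Fix $x\in X$ and apply Lemma~\ref{dE} to define $\sigma_x\colon\{0,\ldots,D\}\to\{0,\ldots,D\}$ by $\sigma_x(i)=\partial(y,x)$ for any $y\in\G_i^\ra(x)$; this is well-defined and satisfies $\G_i^\ra(x)\subseteq\G_{\sigma_x(i)}^\la(x)$. The first step of my plan is to show that $\sigma_x$ is independent of $x$. The value $\partial(y,x)$ is exactly the length of a shortest walk from cell $i$ to cell $0$ in the directed ``cell-quotient'' multigraph whose arc $a\to b$ has multiplicity $d_{ab}^\ra$: the equitability around $x$ allows one both to project each $y$--$x$ walk in $\G$ to a walk between cells of the same length, and to lift any cell-walk $i\to 0$ back to a $y$--$x$ walk of the same length in $\G$ (cell $a$ has $d_{ab}^\ra\ge 1$ out-edges into cell $b$ whenever that multiplicity is positive). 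By Definition~\ref{dD}(ii), the parameters $d_{ab}^\ra$ do not depend on $x$, hence neither does $\sigma_x$; write $\sigma:=\sigma_x$.

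The key trick in the second step is to observe that $\sigma^2=\id$. Pick any $y\in\G_i^\ra(x)$; since $\partial(y,x)=\sigma(i)$ we have $x\in\G_{\sigma(i)}^\ra(y)$, so applying the definition of $\sigma$ at the vertex $y$ in place of $x$ yields $\sigma(\sigma(i))=\partial(x,y)=i$. Thus $\sigma$ is an involution on $\{0,\ldots,D\}$, in particular a bijection. Combining this with the inclusions $\G_i^\ra(x)\subseteq\G_{\sigma(i)}^\la(x)$ and the fact that both $\{\G_i^\ra(x)\}_{i=0}^D$ and $\{\G_j^\la(x)\}_{j=0}^D$ partition $X$, a cardinality comparison in $X=\bigsqcup_i\G_i^\ra(x)\subseteq\bigsqcup_i\G_{\sigma(i)}^\la(x)=X$ forces $\G_i^\ra(x)=\G_{\sigma(i)}^\la(x)$ for every $i$. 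The two families therefore coincide as unordered partitions of $X$, and the equitability of the $\la$-partition follows immediately from that of the $\ra$-partition.

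I expect the first step to be the main obstacle: the clean involution argument of the second step requires $\sigma_y=\sigma_x$ for the specific pair $(x,y)$ that arises, and this genuinely relies on the uniformity clause of Definition~\ref{dD} rather than on the equitability of the distance-$i$ partition around a single vertex.
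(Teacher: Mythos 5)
Your proof is correct, but it takes a genuinely different route from the paper's. The paper argues locally around the single vertex $x$: for each $\ell=1,\ldots,D$ it picks a witness $y_\ell$ with $\partial(y_\ell,x)=\ell$ (asserting its existence via ``every vertex has the same eccentricity''), applies Lemma~\ref{dE} to conclude that the whole out-cell containing $y_\ell$ sits inside $\G_\ell^\la(x)$, observes that these out-cells are pairwise distinct, and finishes with the same counting argument you use. You instead package Lemma~\ref{dE} into the map $\sigma$, prove it is base-point independent via the quotient-digraph distance argument (which does require the uniformity clause of Definition~\ref{dD}, as you note --- this is the one place where your argument uses strictly more than equitability around $x$), and then get bijectivity of $\sigma$ from the involution identity $\sigma^2=\id$ obtained by swapping the roles of $x$ and $y$. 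The main thing your route buys is that surjectivity of $\sigma$ --- equivalently, that every cell $\G_\ell^\la(x)$ is nonempty, i.e., that the in-eccentricity of $x$ equals $D$ --- comes for free from the involution, whereas in the paper this is exactly the content of the casually asserted eccentricity claim, which is the least justified step of its proof. As a bonus, your argument shows that the permutation $i\mapsto i^*$ with $A_i^\top=A_{i^*}$ is an involution, consistent with Theorem~\ref{zk}. Your anticipated ``main obstacle'' (step one) is indeed where the work is, but your projection/lifting argument for the cell-quotient distance is sound: equitability gives both that every arc of $\G$ projects to an arc between cells of positive multiplicity and that a positive multiplicity $d_{ab}^\ra$ lets every vertex of cell $a$ step into cell $b$, so the two distances agree and depend only on the $x$-independent parameters.
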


\begin{proof}
Our proof is by construction. Note that $\G_0^{\ra}(x)=\G_0^\la(x)$.

Since every vertex has the same eccentricity $D$, there exists $y_1\in X$ such that $\partial(y_1,x)=1$. If $y_1\in\G_{i_1}^\ra(x)$, by Lemma~\ref{dE}, for every $z\in\G_{i_1}^{\ra}(x)$, $\partial(z,x)=1$. This yields $\G_{i_1}^\ra(x)\subseteq\G_1^\la(x)$. 
Next, pick $y_2\in X$ such that $\partial(y_2,x)=2$. If $y_2\in\G_{i_2}^\ra(x)$, by Lemma~\ref{dE}, for every $z\in\G_{i_2}^{\ra}(x)$, $\partial(z,x)=2$. This yields $\G_{i_2}^\ra(x)\subseteq\G_2^\la(x)$. 

We continue in this way next $D-1$ steps, till when we end up with a vertex $y_D\in X$ such that $\partial(y_D,x)=D$ (since every vertex has same eccentricity, such a vertex exists). If $y_D\in\G_{i_D}^\ra(x)$, by Lemma~\ref{dE}, for every $z\in\G_{i_D}^{\ra}(x)$, $\partial(z,x)=D$. This yields $\G_{i_D}^\ra(x)\subseteq\G_D^\la(x)$. 

By construction, the elements of the set $\{\G_{i_\ell}^{\ra}(x)\}_{\ell=0}^D$ are disjoint, as well as the elements of the set $\{\G_i^{\la}(x)\}_{i=0}^D$. We also have 
$$
X
=\bigcup_{i=0}^D \G_i^{\ra}(x)
=\bigcup_{\ell=0}^D \G_{i_\ell}^{\ra}(x)
\subseteq 
\bigcup_{i=0}^D \G_i^{\la}(x) 
=X
$$ 
which yields 
$$
\G_{i_\ell}^\ra(x)=\G_\ell^\la(x)
\qquad(0\le\ell\le D).
$$
The result follows.
\end{proof}



\medskip
{\sc Damerell} in \cite{Drm} had proved the following result.

\begin{theorem}[{\cite[Theorems~1 and 2]{Drm}}]
\label{zk}
Let $\G$ denote a strongly connected digraph with vertex set $X$, diameter $D$ and girth $g$. Assume that there exists numbers $b_{ij}$ $(0\le i,j\le D)$ such that $|\G^\ra_1(y)\cap\G_j^\ra(x)|=b_{ij}$, for all $x\in X$, $y\in\G^\ra_i(x)$ $(0\le i,j\le D)$ (i.e., $\G$ is a distance-regular digraph in sense of {\sc Damerell} {\rm \cite{Drm}}). Then
$$
(A_i)^\top = A_{g-i}\qquad(1\le i\le g-1).
$$
Moreover, if $g\ge 3$, then either $D=g$ or $D=g-1$.
\end{theorem}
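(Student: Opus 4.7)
The strategy has two main steps: first use a shortest cycle to pin down the transposition structure $A_i^\top = A_{g-i}$, and then use the resulting permutation $\sigma$, together with the commutative association scheme supplied by Lemma~\ref{PI}, to bound the diameter.

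\smallskip
\noindent\emph{Step 1 (transposition formula).} Fix a shortest directed cycle $C=[v_0,v_1,\ldots,v_{g-1},v_0]$. For $0\le i\le g-1$ the cycle forces $\partial(v_0,v_i)=i$: the inequality $\le$ is immediate from the arcs of $C$, while strict inequality would splice a shorter $v_0\to v_i$ path with the remaining cycle arcs $v_i\to v_{i+1}\to\cdots\to v_0$ (of length $g-i$) to produce a closed walk at $v_0$ of length $<g$, contradicting the girth hypothesis. Symmetrically $\partial(v_i,v_0)=g-i$. Lemma~\ref{PI} supplies a commutative association scheme; its axiom (AS3) gives a permutation $\sigma$ on $\{0,\ldots,D\}$ with $A_i^\top=A_{\sigma(i)}$. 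The cycle witnesses $(v_0,v_i)\in R_i$ and $(v_i,v_0)\in R_{g-i}$, which forces $\sigma(i)=g-i$ for $1\le i\le g-1$. This gives the identity $A_i^\top=A_{g-i}$.

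\smallskip
\noindent\emph{Step 2 (diameter lower bound).} The inequality $D\ge g-1$ is immediate from $\partial(v_0,v_{g-1})=g-1$ obtained in Step~1.

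\smallskip
\noindent\emph{Step 3 (diameter upper bound).} I argue by contradiction, assuming $D\ge g+1$. Since $\sigma^2=\mathrm{id}$ and $\sigma$ already interchanges $i\leftrightarrow g-i$ on $\{1,\ldots,g-1\}$, it must restrict to an involution on $\{g,g+1,\ldots,D\}$; in particular $\sigma(g+1)\ge g$. Pick $x\in X$ and $y\in\G^\ra_{g+1}(x)$. For any out-neighbor $z$ of $y$ with $\partial(x,z)=j$ and $0\le j\le g-1$: Step~1 applied to the arc $y\to z$ gives $\partial(z,y)=g-1$, and (for $j\ge 1$) Step~1 applied to $(x,z)$ gives $\partial(z,x)=g-j$; the triangle inequality then yields
$$
\partial(y,x)\le\partial(y,z)+\partial(z,x)=1+(g-j)=g-j+1.
$$
Combined with $\sigma(g+1)=\partial(y,x)\ge g$, this forces $j=1$ and $\sigma(g+1)=g$; the case $j=0$ is excluded because it would give $\partial(x,y)=g-1\ne g+1$ via Step~1. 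A symmetric analysis of in-neighbors of $y$ shows that every in-neighbor of $y$ has distance $\ge g$ from $x$.

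\smallskip
\noindent\emph{Main obstacle.} The remaining task is to convert these local restrictions into a global contradiction, and this is the subtlest point of the proof. In the subcase $\sigma(g+1)>g$ the analysis above also gives $b_{g+1,1}=0$, so every out-arc leaving $\bigcup_{i\ge g}\G^\ra_i(x)$ stays inside the same set, violating strong connectivity of $\G$ (equivalently, via Proposition~\ref{ec} one would force an additional connected component). In the remaining subcase $\sigma(g+1)=g$ (forcing $\sigma(g)=g+1$ when $D=g+1$) the parameter $b_{g+1,1}$ can be nonzero and the strong-connectivity argument fails; here the plan is to exploit the equitability of the left partition (Proposition~\ref{dF}) together with the commutativity identity $p^h_{1,g}=p^h_{g,1}$ to compare, in two ways, the number of walks of length $2$ of the form $x\to z\to y$ with $z\in\G^\ra_1(x)$ and $y\in\G^\ra_{g+1}(x)$, and obtain a contradiction with the distance-regular valencies $k_1$, $k_g$, $k_{g+1}$. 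I expect this final counting step, which uses both the swap $\sigma(g)=g+1$ and the recursion of Proposition~\ref{Cg}(ii)--(iv), to be the main technical difficulty.
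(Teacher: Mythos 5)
The paper does not prove Theorem~\ref{zk} at all --- it imports it wholesale from Damerell --- so your attempt can only be judged on its own terms. Steps~1 and~2 are correct: the shortest cycle forces $\partial(v_0,v_i)=i$ and $\partial(v_i,v_0)=g-i$, and the permutation $\sigma$ supplied by axiom (AS3) of the scheme from Lemma~\ref{PI} is then pinned down to $\sigma(i)=g-i$ on $\{1,\ldots,g-1\}$, giving $A_i^\top=A_{g-i}$ and $D\ge g-1$. One caveat: in Damerell's original development the association scheme statement (the paper's Lemma~\ref{PI}) is a consequence of the very theorem you are proving, so your argument is circular as a reconstruction of Damerell; it is admissible only because this paper treats Lemma~\ref{PI} as an independent black-boxed citation.

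The genuine gap is Step~3: the contradiction from $D\ge g+1$ is never derived. You prove only the conditional statement ``if $y\in\G^\ra_{g+1}(x)$ has an out-neighbour $z$ with $1\le\partial(x,z)\le g-1$, then $\partial(x,z)=1$,'' and then defer the actual contradiction to two unexecuted plans, both of which are problematic: the strong-connectivity argument in the subcase $\sigma(g+1)>g$ controls only vertices at distance exactly $g+1$ from $x$ (not all of $\bigcup_{i\ge g}\G^\ra_i(x)$, in particular not $\G^\ra_g(x)$), and the walk-counting plan for the subcase $\sigma(g+1)=g$ is not carried out. The irony is that your own triangle-inequality computation already contains the contradiction; what is missing is a witness with $j=2$, and one is forced to exist. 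Take a shortest path $x=x_0\to x_1\to x_2\to\cdots\to x_{g+1}=y$. Standard subpath arguments give $\partial(x,x_2)=2$ and $\partial(x_2,y)=g-1$; since $g\ge 3$, Step~1 applies to both and yields $\partial(x_2,x)=g-2$ and $\partial(y,x_2)=g-(g-1)=1$. So $x_2$ is an out-neighbour of $y$ at distance $j=2$ from $x$, whence $\partial(y,x)\le \partial(y,x_2)+\partial(x_2,x)=1+(g-2)=g-1$. This contradicts $\partial(y,x)=\sigma(g+1)\ge g$, which you correctly established from the fact that $\sigma$ permutes $\{g,\ldots,D\}$. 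With this one observation Step~3 closes in three lines and the entire case analysis, the strong-connectivity argument, and the counting plan become unnecessary.
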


\begin{corollary}
\label{dH}
Let $\G$ denote a distance-regular digraph, and $\{A_i\}_{i=0}^D$ denote the distance-$i$ matrices. Then 
$$
A_i^\top\in\{A_0,A_1,\ldots,A_D\}\qquad(0\le i\le D).
$$
\end{corollary}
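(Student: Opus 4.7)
The plan is to combine Damerell's Theorem~\ref{zk} with Proposition~\ref{dF}. First, I would observe that Definition~\ref{dD} directly implies the hypothesis of Theorem~\ref{zk}: the number $|\G_1^\ra(y)\cap\G_j^\ra(x)|$ is exactly the parameter $d_{ij}^\ra$ which, by conditions (i) and (ii) of that definition, is independent of both $x\in X$ and the choice of $y\in\G_i^\ra(x)$. Hence $\G$ is distance-regular in the sense of {\sc Damerell} (with $b_{ij}=d_{ij}^\ra$), and Theorem~\ref{zk} yields
$$
A_i^\top=A_{g-i}\qquad(1\le i\le g-1),
$$
where $g$ is the girth of $\G$, together with the dichotomy $D=g-1$ or $D=g$ (assuming $g\ge 3$; the case $g=2$ can be dispatched by noting that Lemma~\ref{dE} forces $\G_1^\ra(x)=\G_1^\la(x)$ at every vertex $x$, so $\G$ is undirected and the conclusion is trivial).

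Combined with the identity $A_0^\top=A_0$, Damerell's formula already covers every index $0\le i\le D$ in the case $D=g-1$. The only remaining case is $D=g$ with $i=D=g$. Here I would invoke Proposition~\ref{dF}: for every $x\in X$, the partitions $\{\G_i^\ra(x)\}_{i=0}^{D}$ and $\{\G_i^\la(x)\}_{i=0}^{D}$ coincide, so there is a bijection $\sigma_x$ of $\{0,1,\ldots,D\}$ with $\G_i^\ra(x)=\G_{\sigma_x(i)}^\la(x)$. From the identities already established one reads off $\sigma_x(0)=0$ and $\sigma_x(i)=g-i$ for $1\le i\le g-1$. These values exhaust $\{0,1,\ldots,g-1\}$, so by bijectivity the remaining index must satisfy $\sigma_x(g)=g$, giving $\G_g^\ra(x)=\G_g^\la(x)$ for all $x\in X$, and hence $A_g^\top=A_g$.

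The hard part, as I see it, is precisely this boundary case $i=D=g$, which Damerell's formula does not reach. The key ingredient that closes the gap is Proposition~\ref{dF}: once the two distance partitions are known to coincide as set partitions, a pigeonhole on the bijection $\sigma_x$ forces the top index to map to itself. Everything else (the passage to Damerell's hypothesis, and the handling of indices $1\le i\le g-1$) is an immediate consequence of Definition~\ref{dD} and the cited results, so the proof should be very short.
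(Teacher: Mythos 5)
Your proof is correct and follows essentially the same route as the paper, whose entire argument is to note that Definition~\ref{dD} gives Damerell's hypothesis (with $b_{ij}=d^{\ra}_{ij}$) and then cite Theorem~\ref{zk}. The extra care you take with the boundary case $i=D=g$ (which Theorem~\ref{zk} genuinely does not cover, and which the paper's one-line proof glosses over) is handled correctly by your pigeonhole argument via Proposition~\ref{dF}; an even quicker way to close that same gap is to transpose $\sum_{i=0}^{D}A_i=J$ and cancel the indices $0\le i\le g-1$ already accounted for, which forces $A_g^\top=A_g$.
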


\begin{proof}
Immediately from the definition of distance-regular digraph and Theorem~\ref{zk}.
\end{proof}




\section{Characterization involving commutative association\break schemes}
\label{dA}

Let $\G$ denote a simple strongly connected digraph with vertex set $X$, diameter $D$, and let $\{A_0,A:=A_1,A_2,\ldots,A_D\}$ denote the distance-$i$ matrices of $\G$. Let $\A$ denote adjacency algebra of $\G$. Let $R_i=\{(x,y)\in X\times X\mid (A_i)_{xy}=1\}$ $(0\le i\le D)$ denote a subset of $X\times X$ (note that $\{R_i\}_{i=0}^D$ is a partition of $X\times X$). Let $\M$ denote the Bose--Mesner algebra of a $d$-class $P$-polynomial commutative association scheme $\XXi$ with standard basis $\{B_i\}_{i=0}^d$. In this section we prove two characterizations of a distance-regular digraph:
\begin{enumerate}[left=1cm, label={\sc(\Alph*)}]
\item $\G$ is a distance-regular digraph if and only if $(X,\{R_i\}_{i=0}^D)$ is a commutative $D$-class association scheme.
\item $\G$ is a distance-regular digraph if and only if $\A$ is the Bose--Mesner algebra of a commutative $D$-class association scheme.
\end{enumerate}


\begin{lemma}
\label{dI}
Let $\G=\G(A)$ denote a distance-regular digraph, and $\{A_0,A:=A_1,A_2,\ldots,A_D\}$ denote the distance-$i$ matrices. There exist complex scalars $p^h_{1i}$ $(0\le i,h\le D)$ such that 
$$
A_iA=\sum_{h=0}^D p^h_{i1} A_h.
$$
\end{lemma}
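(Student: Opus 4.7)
The plan is to compute the $(x,y)$-entry of $A_iA$ directly and show that it depends only on $i$ and on $h:=\partial(x,y)$. Fix vertices $x,y\in X$, and let $h=\partial(x,y)$. Expanding the matrix product,
$$
(A_iA)_{xy}=\sum_{z\in X}(A_i)_{xz}(A)_{zy}=|\{z\in X\mid \partial(x,z)=i,\ \partial(z,y)=1\}|=|\Gamma_i^{\ra}(x)\cap\Gamma_1^{\la}(y)|.
$$

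Next I would invoke Definition~\ref{dD} at the vertex $x$. Since $y\in\Gamma_h^{\ra}(x)$, the equitable partition property $\{\Gamma_0^{\ra}(x),\ldots,\Gamma_D^{\ra}(x)\}$ (with parameters independent of $x$) immediately gives
$$
|\Gamma_1^{\la}(y)\cap\Gamma_i^{\ra}(x)|=d^{\la}_{hi},
$$
and crucially this number does not depend on the choice of $x\in X$ nor on the specific $y\in\Gamma_h^{\ra}(x)$. Setting $p^{h}_{i1}:=d^{\la}_{hi}$, we therefore have $(A_iA)_{xy}=p^{h}_{i1}$ whenever $\partial(x,y)=h$; equivalently, $(A_iA)_{xy}=\sum_{h=0}^{D}p^{h}_{i1}(A_h)_{xy}$ for all $x,y\in X$. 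This yields the asserted identity $A_iA=\sum_{h=0}^{D}p^{h}_{i1}A_h$.

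The argument is essentially a one-line reading of the definition, so there is no substantial obstacle. The only subtle point to be careful about is that we use the $d^{\la}$-parameters (counting in-neighbors of $y$ lying in $\Gamma_i^{\ra}(x)$), not the $d^{\ra}$-parameters; this is exactly why Definition~\ref{dD} was formulated to include both families of intersection numbers. No use of Proposition~\ref{dF} is needed here, although it would give an equivalent description of the coefficients via the partition $\{\Gamma_j^{\la}(x)\}$.
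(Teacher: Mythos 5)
Your proposal is correct and is essentially identical to the paper's own proof: both compute $(A_iA)_{xy}=|\G_i^\ra(x)\cap\G_1^\la(y)|$ and then read off the coefficient $d^\la_{hi}$ from Definition~\ref{dD}, setting $p^h_{i1}:=d^\la_{hi}$. Your remark that the $d^\la$-parameters (rather than the $d^\ra$-parameters) are the ones needed matches the paper's usage exactly.
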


\begin{proof}
Note that
$$
(A_iA)_{xy}=|\G_i^\ra(x)\cap\G_1^\la(y)|\qquad(x,y\in X).
$$
Since $\G$ is a distance-regular digraph, by definition the distance-$i$ partition $\{\G^\ra_i(x)\}_{i=0}^D$ around a vertex $x$ has the same structure for every $x\in X$ with the same corresponding parameters. Then, for $\partial(x,y)=j$ we have
$$
|\G_i^\ra(x)\cap\G_1^\la(y)|=d^\la_{ji}\qquad(\mbox{does not depend on the choice of } y\in\G_j^\ra(x)).
$$
Now, if we define $p^j_{i1}:=d^\la_{ji}$ $(0\le i,j\le D)$, we have
$$
A_iA=\sum_{h=0}^D p^h_{i1} A_h.
$$
The result follows.
\end{proof}

\begin{proposition}
\label{jE}
Let $\G=\G(A)$ denote a distance-regular digraph, and $\{A_0,A:=A_1,A_2,\ldots,A_D\}$ denote the distance-$i$ matrices.
\begin{enumerate}[label=\rm(\roman*)]
\item There exist polynomials $p_i$ with $\deg p_i\le i$ $(0\le i\le D)$ such that $A_i=p_i(A)$.
\item For every $i,j$ $(0\le i,j\le D)$ there exist complex scalars $p^h_{ij}$ $(0\le h\le D)$ such that 
$$
A_iA_j=\sum_{h=0}^D p^h_{ij} A_h.
$$ 
Moreover, for the scalars $p^h_{ij}$ $(0\le i,j,h\le D)$ the following hold.
\begin{enumerate}[label=\rm(\roman*)]
\item $p^h_{ij}=0$, if $h>i+j$.
\item $p^h_{ij}\ne 0$, if $h=i+j$.
\end{enumerate}
\label{cB}
\item $A_iA_j=A_jA_i$ for every $i,j$ $(0\le i,j\le d)$ (i.e., for the {\em intersection numbers} $p^h_{ij}$ $(0\le i,j,h\le D)$ from \ref{cB} we have that $p^h_{ij}=p^h_{ji}$).\label{cF}
\end{enumerate}
\end{proposition}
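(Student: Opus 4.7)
The plan is to observe that all three claims reduce, with essentially no extra work, to results already established in Section~\ref{Av} together with Lemma~\ref{dI}. The key bridge is that Lemma~\ref{dI} shows that every distance-regular digraph automatically satisfies the hypothesis
$$
A_iA=\sum_{h=0}^D p^h_{i1}A_h\qquad(0\le i\le D)
$$
of both Proposition~\ref{Cg} and Corollary~\ref{eF}. Once this identification is made, (i)--(iii) follow by direct citation.

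Concretely, for (i), I would invoke Proposition~\ref{Cg}(iii), which from precisely this hypothesis produces polynomials $p_i\in\RR[t]$ with $\deg p_i\le i$ such that $A_i=p_i(A)$ for every $i$ with $0\le i\le D$. For (ii), the existence of complex scalars $p^h_{ij}$ with $A_iA_j=\sum_{h=0}^D p^h_{ij}A_h$ is exactly Corollary~\ref{eF}(i), while the two sharper statements $p^h_{ij}=0$ for $h>i+j$ and $p^h_{ij}\ne 0$ for $h=i+j$ are items (ii) and (iii) of that same corollary.

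For (iii), there are two equivalent routes. The first is to invoke Proposition~\ref{Cg}(iv) directly. The second, which I prefer as a self-contained double-check, is to use (i): writing $A_i=p_i(A)$ and $A_j=p_j(A)$ gives
$$
A_iA_j=p_i(A)p_j(A)=p_j(A)p_i(A)=A_jA_i,
$$
and then the equality $p^h_{ij}=p^h_{ji}$ of intersection numbers follows by comparing coefficients on the linearly independent set $\{A_0,A_1,\ldots,A_D\}$ (linear independence being immediate from the fact that the $A_h$'s have pairwise disjoint supports as $01$-matrices that sum to $J$).

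There is no real obstacle in this argument; the whole content of the proposition has been absorbed into the earlier lemmas and propositions, so I would keep the write-up to a few lines of cross-references. If anything, the only point worth a sentence of care is stressing that Lemma~\ref{dI} is precisely the hypothesis needed to apply Proposition~\ref{Cg} and Corollary~\ref{eF}, so that no additional structural assumption on $\G$ is implicitly required.
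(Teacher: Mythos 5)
Your proposal matches the paper's proof exactly: the paper also derives all three parts by citing Lemma~\ref{dI} (to verify the hypothesis $A_iA=\sum_h p^h_{i1}A_h$) together with Proposition~\ref{Cg} and Corollary~\ref{eF}. Your write-up just makes the cross-references explicit, which is fine.
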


\begin{proof}
Immediate from Lemma~\ref{dI}, Proposition~\ref{Cg}, and Corollary~\ref{eF}.
\end{proof}

\medskip
Note that our first characterization is very similar to the result of {\sc Damerell} given in Lemma~\ref{PI}.

\begin{theorem}[Characterization A]
\label{dJ}
Let $\G$ denote a simple strongly connected digraph with vertex set $X$, diameter $D$, and let $\{A_0,A:=A_1,A_2,\ldots,A_D\}$ denote the distance-$i$ matrices of $\G=\G(A)$. Let $\{R_i\}_{i=0}^D$ denote a partition of $X\times X$ where $R_i=\{(x,y)\in X\times X\mid (A_i)_{xy}=1\}$ $(0\le i\le D)$. Then, $\G$ is a distance-regular digraph if and only if $(X,\{R_i\}_{i=0}^D)$ is a commutative association scheme.
\end{theorem}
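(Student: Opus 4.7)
The plan is to prove this as a direct bi-implication, with both directions reducing to facts already established in Sections 3 and 4. In the forward direction I would just check the five axioms (AS1)--(AS5) in order. (AS1) $A_0=I$ is definitional. (AS2) $\sum_{i=0}^D A_i = J$ follows from strong connectivity and diameter $D$: every ordered pair $(x,y)$ has a unique $\partial(x,y)\in\{0,1,\ldots,D\}$, so the $A_i$ partition $J$. (AS3) is Corollary~\ref{dH}. (AS4) is Proposition~\ref{jE}(ii). (AS5) is Proposition~\ref{jE}(iii). Along the way I would note that each $R_i$ is nonempty (so we really have a $D$-class scheme) exactly because the diameter is $D$.

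\textbf{Reverse direction.} Assume $(X,\{R_i\}_{i=0}^D)$ is a commutative association scheme. Fix $x\in X$ and $i,j$ with $0\le i,j\le D$, and pick any $y\in\G^\ra_i(x)$, i.e.\ $(A_i)_{xy}=1$. The point is to read the numbers in Definition~\ref{dD} off as entries of products of relation matrices:
$$
|\G_1^\la(y)\cap\G_j^\ra(x)|=\sum_{z\in X}(A_j)_{xz}(A)_{zy}=(A_jA)_{xy},
$$
while a symmetric computation gives
$$
|\G_1^\ra(y)\cap\G_j^\ra(x)|=\sum_{z\in X}(A_j)_{xz}(A)_{yz}=(A_jA^\top)_{xy}.
$$
Now (AS3) supplies an index $h$ with $A^\top=A_h$, and (AS4) gives $A_jA_h=\sum_k p^k_{jh}A_k$ and $A_jA=\sum_k p^k_{j1}A_k$. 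Since $(x,y)\in R_i$, evaluating the $(x,y)$-entry picks out the coefficient $p^i_{j1}$ (respectively $p^i_{jh}$), which depends only on $i$ and $j$, not on the particular pair $(x,y)$. Hence the distance partition $\{\G^\ra_0(x),\ldots,\G^\ra_D(x)\}$ is equitable and its parameters are independent of $x$, so $\G$ is distance-regular.

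\textbf{Anticipated obstacle.} Honestly, there is no deep obstacle: once Corollary~\ref{dH}, Proposition~\ref{jE}, and the matrix-entry rewrite above are in hand, both implications are bookkeeping. The one place I would be careful is in the reverse direction when computing $|\G_1^\ra(y)\cap\G_j^\ra(x)|$: the natural rewrite produces $A_jA^\top$, not $A_jA$, so axiom (AS3) must be invoked explicitly to put this back inside the Bose--Mesner algebra before applying (AS4). I would also make a brief closing remark that what we really used about $(X,\{R_i\})$ being an association scheme is precisely axioms (AS3)--(AS4); axioms (AS1), (AS2), (AS5) are automatic from the geometry, so the theorem can be viewed as saying that distance-regularity is exactly the combinatorial content of (AS3) and (AS4) for the distance matrices.
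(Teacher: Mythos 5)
Your proof is correct, and the forward direction is essentially the paper's own: both verify (AS1)--(AS5) by citing Corollary~\ref{dH} for (AS3) and Proposition~\ref{jE} for (AS4)--(AS5), with (AS1)--(AS2) immediate from strong connectivity and the definition of the distance matrices. The reverse direction is where you take a genuinely different route. The paper deduces equitability of the distance partition by invoking Theorem~\ref{pi}, a result imported from \cite{MPc} about $x$-distance-faithful equitable partitions induced by any $01$-matrix in a Bose--Mesner algebra, and then uses the coincidence of the number of cells with $D+1$ to identify that partition with the distance partition itself. You instead compute the two equitability parameters directly as matrix entries, $|\G_1^\la(y)\cap\G_j^\ra(x)|=(A_jA)_{xy}$ and $|\G_1^\ra(y)\cap\G_j^\ra(x)|=(A_jA^\top)_{xy}$, and read them off as the intersection numbers $p^i_{j1}$ and $p^i_{jh}$ (where $A^\top=A_h$ by (AS3)); I checked the index conventions and both identities are right, and your flag that (AS3) must be invoked before (AS4) can be applied to $A_jA^\top$ is exactly the point where a careless version of this argument would break. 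Your route buys self-containedness (no external black box) and makes visible that only (AS3) and (AS4) carry weight in this direction, which dovetails with the paper's later Characterization~C'' (Theorem~\ref{eP}); the paper's route is shorter on the page but hides the combinatorics inside the cited theorem. One small caveat on your closing remark: in the forward direction (AS5) is not ``automatic from the geometry'' --- commutativity $A_iA_j=A_jA_i$ is obtained via the polynomial expressions $A_i=p_i(A)$ of Proposition~\ref{Cg}, so it is a consequence of the $A_iA$ recurrences rather than of the distance structure alone; this does not affect the validity of your proof, only the framing of the aside.
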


\begin{proof}
$(\La)$ Assume that $(X,\{R_i\}_{i=0}^D)$ is a commutative association scheme and consider a graph $\G=\G(A)$. Since the diameter of $\G$ is $D$ and we have a $D$-class association scheme, by Theorem~\ref{pi}, for every vertex $x\in X$, the distance-$i$ partition of $\G$ is equitable and the corresponding parameters do not depend on the choice of a vertex. In other words, for every $x\in X$, the distance-$i$ partition $\{\G_i^\ra(x)\}_{i=0}^D$ is equitable and of its structure do not depend on the choice of a vertex. The result follows. 

$(\Ra)$ Assume that $\G$ is a distance-regular digraph. From definition of distance-$i$ matrices it follows that (AS1) $A_0=I$ and (AS2) $\sum_{i=0}^D A_i= J$ hold. By Corollary~\ref{dH}, (AS3) $A_i^\top\in\{A_0,A_1,\ldots,A_D\}$ $(0\le i\le D)$ also holds. In the end, Proposition~\ref{jE} implies (AS4) $A_iA_j=\sum_{h=0}^D p^h_{ij} A_h$ for every $i,j$  $(0\le i,j\le D)$ and some complex scalars $p^h_{ij}$ $(0\le h\le D)$; and (AS5) $A_iA_j=A_jA_i$. Thus $(X,\{R_i\}_{i=0}^D)$ is a commutative $D$-class association scheme.
\end{proof}

\begin{corollary}
\label{oD}
Let $\G$ denote a digraph with vertex set $X$, diameter $D$, and let $A$ denote the adjacency matrix of $\G$. If $\G$ is a distance-regular digraph, then $\G$ has exactly $D+1$ distinct eigenvalues.
\end{corollary}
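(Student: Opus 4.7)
The plan is to reduce Corollary~\ref{oD} to an earlier result already proved in Section~\ref{Av}, namely Proposition~\ref{eL}(i), which states that if a simple strongly connected digraph satisfies conditions (i), (ii) of Problem~\ref{Aa}, then its adjacency matrix has exactly $D+1$ distinct eigenvalues. So the only work is to verify that these two conditions follow from distance-regularity, which has essentially already been done in the preceding parts of Section~\ref{dA}.

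First I would invoke Corollary~\ref{dH} to conclude that $A^\top \in \{A_0, A_1, \ldots, A_D\}$, which gives condition (i) of Problem~\ref{Aa}. Next, Lemma~\ref{dI} provides complex scalars $p^h_{i1}$ $(0 \le i, h \le D)$ such that
$$
A_i A = \sum_{h=0}^D p^h_{i1} A_h \qquad (0 \le i \le D),
$$
which is condition (ii) of Problem~\ref{Aa}. At this point the hypotheses of Proposition~\ref{eL} are all met, and its first conclusion yields immediately that $A$ has $D+1$ distinct eigenvalues.

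An alternative self-contained route, which I would mention briefly, is a direct dimension count: by Proposition~\ref{jE}(i) each $A_i$ is a polynomial in $A$, so $\Span\{A_0, \ldots, A_D\} \subseteq \A$; conversely, Lemma~\ref{dI} combined with $A_0 = I$ shows inductively that every power $A^\ell$ lies in $\Span\{A_0, \ldots, A_D\}$, giving the reverse inclusion. Since the $D+1$ distance-$i$ matrices are linearly independent (they are nonzero $01$-matrices with pairwise disjoint supports summing to $J$), we obtain $\dim(\A) = D+1$. Finally, by Proposition~\ref{eJ}(i) the matrix $A$ is normal, hence diagonalizable, so the number of distinct eigenvalues of $A$ equals $\dim(\A) = D+1$.

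There is no real obstacle here; the statement is essentially a bookkeeping corollary of results already established. The only mild point to be careful about is ensuring that the linear independence of $\{A_0, A_1, \ldots, A_D\}$ is explicitly justified in the dimension-count version, since the argument via Proposition~\ref{eL} hides this inside the earlier proof.
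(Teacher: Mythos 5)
Your proof is correct, but it follows a different path from the one in the paper. The paper's own proof of Corollary~\ref{oD} first invokes Theorem~\ref{dJ} (Characterization~A) to conclude that $\{A_0,\ldots,A_D\}$ is the standard basis of a commutative $D$-class association scheme, then uses Proposition~\ref{jE} to see that this scheme is generated by $A$, and finally cites an external result (\cite[Corollary~3.5]{MPc}) for the count of distinct eigenvalues. You instead bypass the association-scheme machinery entirely: Corollary~\ref{dH} gives $A^\top\in\{A_0,\ldots,A_D\}$, Lemma~\ref{dI} gives the relations $A_iA=\sum_h p^h_{i1}A_h$, and Proposition~\ref{eL}(i) then delivers the conclusion directly. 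This is a legitimate shortcut whose main advantage is that it stays entirely inside the paper --- no appeal to \cite{MPc} is needed --- and it makes transparent that the eigenvalue count is already a consequence of the two ``warm-up'' conditions of Problem~\ref{Aa} rather than of the full scheme structure. Your alternative dimension-count argument ($\A=\Span\{A_0,\ldots,A_D\}$ has dimension $D+1$ by disjointness of the supports of the nonzero $01$-matrices $A_i$, and $A$ is normal by Proposition~\ref{eJ}(i), so the number of distinct eigenvalues equals $\dim(\A)$) is also sound and is essentially the content of the proof of Proposition~\ref{eL}(i) unwound; your remark that the linear independence of the distance matrices must be justified explicitly in that version is well taken.
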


\begin{proof}
Let $\{A_0,A:=A_1,A_2,\ldots,A_D\}$ denote the distance-$i$ matrices of $\G$. By Theorem~\ref{dJ} (Characterization~A), the set $\{A_0,A_1,\ldots,A_D\}$ is a basis of some commutative $D$-class association scheme $\XXi$. By Proposition~\ref{jE} every distance-$i$ matrix $A_i$ $(0\le i\le D)$ can be written as polynomial in $A$, and with that $\XXi$ is generated by $A$. The result now follows from \cite[Corollary~3.5]{MPc}.
\end{proof}

\begin{proposition}
\label{oC}
Let $\M$ denote a $d$-class $P$-polynomial association scheme $\XXi$ with the standard basis $\{B_0,B_1,\ldots,B_d\}$, and assume that $(I=B_0,B=B_1,\ldots,B_d)$ is a $P$-polynomial ordering of $\XXi$. Let $\G=\G(B)$ denote a  digraph with the adjacency matrix $B$. Then, $\G$ is a graph of diameter $d$ and the $B_i$'s are the distance-$i$ matrices of $\G$.
\end{proposition}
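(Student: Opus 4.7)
The plan is to exploit the triangular relationship between the two bases $\{I,B,B^2,\ldots,B^d\}$ and $\{B_0,B_1,\ldots,B_d\}$ of $\M$. Since $(B_0,B_1,\ldots,B_d)$ is a $P$-polynomial ordering, there exist polynomials $p_j(t)$ with $\deg p_j=j$ such that $B_j=p_j(B)$ for $0\le j\le d$; in particular $B$ generates $\M$, and because $\dim\M=d+1$, the set $\{I,B,B^2,\ldots,B^d\}$ is also a basis of $\M$. Viewing the relations $B_j=p_j(B)$ as a system, the associated change-of-basis matrix is upper triangular with nonzero diagonal (its $(j,j)$-entry is the leading coefficient of $p_j$), hence invertible with an upper-triangular inverse. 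Consequently, for each $i$ with $0\le i\le d$ one can write
$$
B^i=\sum_{j=0}^{i}c^{(i)}_j B_j,\qquad c^{(i)}_i\ne 0.
$$

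The second step is to use this triangular expansion to read off the distance $\partial(x,y)$ directly from the standard basis. Given $x,y\in X$, by \ref{ce}, \ref{cg} and the disjointness of the $B_j$'s, there is a unique index $h=h(x,y)$ with $(B_h)_{xy}=1$ and $(B_j)_{xy}=0$ for all $j\ne h$. Evaluating $(B^i)_{xy}$ via the expansion above: for $i<h$, every index $j$ appearing in the sum satisfies $j\le i<h$, so $(B_j)_{xy}=0$ and hence $(B^i)_{xy}=0$; for $i=h$, only the $j=h$ term survives and yields $(B^h)_{xy}=c^{(h)}_h\ne 0$. By Lemma~\ref{hB}, this says there is no walk of length less than $h$ from $x$ to $y$, but there does exist a walk of length $h$; therefore $\partial(x,y)=h$. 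Equivalently, $(B_i)_{xy}=1$ if and only if $\partial(x,y)=i$, so $B_i$ coincides with the distance-$i$ matrix of $\G$ for each $0\le i\le d$.

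Finally, since $B_d$ belongs to the standard basis and is therefore nonzero, there exist vertices at distance exactly $d$; combined with the fact that every pair of vertices is at some distance at most $d$ (as $\sum_{i=0}^d B_i=J$ forces some $B_h$ with $h\le d$ to be nonzero at each entry), the digraph $\G$ is strongly connected and has diameter exactly $d$. Simplicity of $\G$ is automatic, since $B=B_1$ is a $01$-matrix disjoint from $B_0=I$, so $B$ has zero diagonal and no multiple arcs. No real obstacle is anticipated; the only delicate point is the invertibility of the triangular change of basis, and this is immediate from the hypothesis $\deg p_j=j$.
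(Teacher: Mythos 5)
Your proof is correct, but it takes a genuinely different route from the paper's. The paper specializes the closure property of the standard basis to $B_iB=\sum_h p^h_{i1}B_h$, compares polynomial degrees to get $p^h_{i1}=0$ for $h>i+1$ and $p^{i+1}_{i1}\ne 0$, and then runs an induction on $m$ using the combinatorial identity $(B_{m-1}B)_{xy}=|\G^{\ra}_{m-1}(x)\cap\G_1^{\la}(y)|$ together with the non-negativity of the intersection numbers to show $(B_m)_{xy}=1$ exactly when $\partial(x,y)=m$. You instead invert the triangular system $B_j=p_j(B)$, $\deg p_j=j$, to write $B^i=\sum_{j=0}^{i}c^{(i)}_jB_j$ with $c^{(i)}_i\ne 0$, and then read off $\partial(x,y)$ as the least $i$ with $(B^i)_{xy}\ne 0$ via Lemma~\ref{hB}; since the $B_j$ are disjoint $01$-matrices summing to $J$, this immediately identifies $B_h$ with the distance-$h$ matrix, with no induction and no appeal to the integrality or sign of the intersection numbers. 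Your argument is arguably more direct; the paper's version has the side benefit of producing the recurrence $B_iB=\sum_{h=0}^{i+1}p^h_{i1}B_h$ with $p^{i+1}_{i1}\ne 0$ explicitly, which is the $P$-polynomial structure reused elsewhere in the text. Two small points worth making explicit in your write-up: the inversion of the triangular system should be phrased as a finite induction (solve $B^i=(B_i-\sum_{k<i}a_{ik}B^k)/a_{ii}$ using the already-inverted lower powers), and the step from ``shortest walk of length $h$'' to ``$\partial(x,y)=h$'' uses that a shortest walk is necessarily a path. The paper also routes strong connectivity through Theorem~\ref{Ra} via $J=q(B)$, whereas you get it for free from \ref{cg}; both are fine.
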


\begin{proof}
Since $\XXi$ is a $P$-polynomial scheme, there is at least one ordering $(B_0,B=B_1,\ldots,B_d)$ of the standard basis of $\M$ such that there exists a  sequence of polynomials $\{p_i(t)\}_{i=0}^d$ with the property that $\deg(p_i)=i$ $(0\le i\le d)$ and $B_i=p_i(B)$. Note that $B$ is adjacency matrix of some (directed) graph $\G$. Since $J=\sum_{i=0}^d p_i(B)=q(B)$, $\G$ is strongly connected graph (see Theorem~\ref{Ra}). Taking $j=1$ in the property (AS4) of the standard basis, we have that there exist scalars $p^h_{i1}$ $(0\le i,h\le d)$ such that
$$
B_iB=\sum_{h=0}^d p^{h}_{i1} B_h\qquad(0\le i\le d).
$$
Similar as in \cite[Proposition~2.7.1]{BCN}, note that 
$$
\underbrace{p_{i}(t)p_1(t)}_{\deg = i+1}=
\sum_{h=0}^d p^{h}_{i1} \underbrace{p_h(t)}_{\deg = h}\qquad(0\le i\le d-1).
$$
This yields that if $h>i+1$ then $p^h_{i1}=0$; and that 
\begin{equation}
\label{jG}
p^{\ell}_{\ell-1,1}\ne 0\qquad (1\le \ell\le d). 
\end{equation}
With that we have
\begin{equation}
\label{oA}
B_iB=\sum_{h=0}^{i+1} p^{h}_{i1} B_h\qquad(0\le i\le d-1).
\end{equation}
Note also that, for any $y,z\in X$,
$$
(B_iB)_{zy}=|\G^{\ra}_i(z)\cap\G_1^{\la}(y)|\qquad(0\le i\le d-1),
$$
and that the scalars $p^h_{i1}$ $(0\le h,i\le d)$ are non-negative integers (since $\{B_i\}_{i=0}^d$ is the standards basis). Using \eqref{oA} and induction on $i$, in the next few lines we prove that $B_i$ $(0\le i\le d)$ is the distance-$i$ matrix of the digraph $\G=\G(B)$.

\bigskip
\noindent
{\sc Basis of induction.} $B_0$ is the identity matrix, while $B$ is the adjacency matrix of $\G$ by construction. The claim is true for $i\in\{0,1\}$.

\smallskip
\noindent
{\sc Step of induction.} Assume that
$$
(B_h)_{xy}=1 
\qquad\mbox{ if and only if }\qquad
\partial(x,y)=h
$$
holds for every $h\in\{0,1,\ldots,m-1\}$ for $m\ge 2$ $(m\le d)$. We use this assumption and prove that $(B_m)_{x,y}=1$ if and only if $\partial(x,y)=m$. From \eqref{oA} we have
\begin{equation}
\label{oB}
B_{m-1}B=
p^0_{m-1,1} B_0+
p^1_{m-1,1} B_1+\cdots+
p^m_{m-1,1} B_m.
\end{equation}

$(\La)$ Assume that $\partial(x,y)=m$. By induction assumption, we have $(B_i)_{xy}=0$ for $0\le i\le m-1$. By \eqref{oB} this implies
$$
\underbrace{(B_{m-1}B)_{xy}}_{=|\G^{\ra}_i(x)\cap\G^{\la}(y)|}=p^m_{m-1,1} (B_m)_{xy}.
$$
Since $\partial(x,y)=m$, we have $|\G^{\ra}_{m-1}(x)\cap\G_1^{\la}(y)|\ne 0$, and consequently $(B_m)_{xy}=1$.

$(\Ra)$ Assume that $(B_m)_{xy}=1$. Since $\{B_i\}_{i=0}^d$ is the standard basis, from \eqref{oB} we have
$$
\underbrace{(B_{m-1}B)_{xy}}_{=|\G^{\ra}_i(x)\cap\G_1^{\la}(y)|}=p^m_{m-1,1}.
$$
By \eqref{jG}, $p^m_{m-1,1}\ne 0$, which yields $|\G^{\ra}_i(x)\cap\G^{\la}(y)|\ne 0$. Therefore, $\partial(x,y)=m$.

\bigskip
The result follows.
\end{proof}

\begin{proposition}
\label{Oc}
Let $\M$ denote the Bose--Mesner algebra of a $P$-polynomial $d$-class association scheme $\XXi$ with standard basis $\{B_i\}_{i=0}^d$. If $(B_0,B:=B_1,\ldots,B_d)$ is a $P$-polynomial ordering for $\XXi$, then $\G=\G(B)$ is a distance-regular digraph.
\end{proposition}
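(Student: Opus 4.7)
The plan is to reduce this proposition almost entirely to Proposition~\ref{oC} together with the defining property~\ref{cb} of the intersection numbers of the association scheme. First, I would invoke Proposition~\ref{oC} to obtain that $\G=\G(B)$ is a strongly connected digraph of diameter $D=d$ whose distance-$i$ matrices coincide exactly with the standard basis elements, i.e.\ $A_i=B_i$ for $0\le i\le d$. After that identification, all cell sizes in the distance-$i$ partition around any vertex can be read off as entries of products $B_iB_j$, which the association-scheme axioms control.

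Next, to verify Definition~\ref{dD}, I would fix $x\in X$, $i,j$ with $0\le i,j\le d$, and $y\in\G_i^\ra(x)$, and compute the two numbers appearing in \eqref{jB}. By (AS3) there is an index $1'$ with $B^\top=B_{1'}$. Using $(B_1)_{yz}=(B_{1'})_{zy}$ one gets
\begin{align*}
|\G_1^\ra(y)\cap\G_j^\ra(x)| &= \sum_{z\in X}(B_j)_{xz}(B_1)_{yz} = (B_jB_{1'})_{xy},\\
|\G_1^\la(y)\cap\G_j^\ra(x)| &= \sum_{z\in X}(B_j)_{xz}(B_1)_{zy} = (B_jB_1)_{xy}.
\end{align*}
By \ref{cb} of the scheme, $B_jB_{1'}=\sum_h p^h_{j,1'}B_h$ and $B_jB_1=\sum_h p^h_{j1}B_h$; since $(x,y)\in R_i$ (because $A_i=B_i$ and $\partial(x,y)=i$) and since $(B_h)_{xy}=\delta_{hi}$, these entries equal $p^i_{j,1'}$ and $p^i_{j1}$ respectively. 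Setting $d^\ra_{ij}:=p^i_{j,1'}$ and $d^\la_{ij}:=p^i_{j1}$, both counts depend only on $i$ and $j$ and not on the choice of $x$ or of $y\in\G_i^\ra(x)$.

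Collecting these two computations, the distance-$i$ partition around every vertex is equitable with the same parameters, which is exactly Definition~\ref{dD}. Hence $\G=\G(B)$ is a distance-regular digraph.

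I do not anticipate a serious obstacle: the only subtle point is keeping track of which transpose appears where when converting $|\G_1^\ra(y)\cap\G_j^\ra(x)|$ into a matrix product, which is handled by the substitution $B_1^\top=B_{1'}$ guaranteed by the scheme axioms. Everything else is a direct application of Proposition~\ref{oC} and the intersection-number identity \ref{cb}.
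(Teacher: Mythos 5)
Your proof is correct, but it takes a genuinely different route from the paper's. The paper, after invoking Proposition~\ref{oC} to identify the $B_i$'s with the distance-$i$ matrices (exactly as you do), immediately observes that $(X,\{R_i\}_{i=0}^D)$ is a commutative association scheme and closes by citing Theorem~\ref{dJ} (Characterization~A); that theorem's backward direction in turn rests on the imported result Theorem~\ref{pi} from \cite{MPc} about equitable distance-faithful partitions arising from Bose--Mesner algebras. You instead verify Definition~\ref{dD} directly: writing $|\G_1^\ra(y)\cap\G_j^\ra(x)|=(B_jB_{1'})_{xy}$ and $|\G_1^\la(y)\cap\G_j^\ra(x)|=(B_jB_1)_{xy}$ with $B_1^\top=B_{1'}$ from (AS3), and reading off the entries as $p^i_{j,1'}$ and $p^i_{j1}$ via \ref{cb}, which makes the equitable-partition parameters explicit as intersection numbers. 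Your computations check out (the transpose bookkeeping is handled correctly), and the argument is self-contained and arguably more informative, since it exhibits $d^\ra_{ij}$ and $d^\la_{ij}$ concretely; the paper's version is shorter given that Characterization~A is already available and serves as the hub of its logical architecture. The only point you leave implicit is that each cell $\G_i^\ra(x)$ is nonempty for every $x$ (so that the distance partition genuinely has $D+1$ cells); this follows from the regularity of the relations of an association scheme and is equally implicit in the paper's proof, so it is not a gap worth flagging.
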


\begin{proof}
By Proposition~\ref{oC}, for the $P$-polynomial ordering $(B_0,B:=B_1,\ldots,B_d)$ we have that the $B_i$'s are the distance-$i$ matrices of the graph $\G=\G(B)$. This implies that diameter $D$ of $\G$ is equal to $d$. Let $\{R_i\}_{i=0}^D$ denotes a partition of $X\times X$ where $R_i=\{(x,y)\in X\times X\mid (B_i)_{xy}=1\}$ $(0\le i\le D)$. Then, $(X,\{R_i\}_{i=0}^D)$ is a commutative association scheme (see Proposition~\ref{Cg}(iv)) and the result follows from Theorem~\ref{dJ} (Characterization~A).
\end{proof}

\begin{theorem}[Characterization B]
\label{eB}
Let $\G=\G(A)$ denote a strongly connected digraph with vertex set $X$, diameter $D$, and let $\A$ denote the adjacency algebra of $\G$. Then, $\G$ is a distance-regular digraph if and only if $\A$ is the Bose--Mesner algebra of a commutative $D$-class association scheme.
\end{theorem}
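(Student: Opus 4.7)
The plan is to derive Theorem~\ref{eB} from Characterization~A (Theorem~\ref{dJ}) by identifying, in both directions, the standard basis of the $D$-class scheme with the distance-$i$ matrices $\{A_0,A_1,\ldots,A_D\}$ of $\G$. Once this identification is made, the desired equivalence reduces to matching up $\A$ with the Bose--Mesner algebra $\M=\Span\{A_0,A_1,\ldots,A_D\}$ that Theorem~\ref{dJ} already knows how to handle.

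For the $(\Rightarrow)$ direction, assuming $\G$ is a distance-regular digraph, Theorem~\ref{dJ} supplies a commutative $D$-class association scheme $(X,\{R_i\}_{i=0}^D)$ whose Bose--Mesner algebra is $\M=\Span\{A_0,A_1,\ldots,A_D\}$. To conclude $\A=\M$, I would invoke Proposition~\ref{jE}(i), which expresses each $A_i$ as a polynomial in $A$, yielding $\M\subseteq\A$; the reverse inclusion is immediate from $A=A_1\in\M$ together with closure of $\M$ under ordinary matrix multiplication. Hence $\A$ is indeed the Bose--Mesner algebra of the scheme furnished by Theorem~\ref{dJ}.

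For the $(\Leftarrow)$ direction, suppose $\A$ is the Bose--Mesner algebra of some commutative $D$-class association scheme with standard basis $\{B_0,B_1,\ldots,B_D\}$. Because $A$ generates $\A$ by hypothesis, Proposition~\ref{Ha} applies: part~(i) assigns to each $j$ a single value $i_j\in\{0,1,\ldots\}$ such that every nonzero entry $(B_j)_{zy}=1$ comes from a pair with $\partial(z,y)=i_j$, and part~(ii) gives $A_i\in\A$ together with the decomposition $A_i=\sum_{j\,:\,i_j=i}B_j$. The target is then to upgrade this partial relationship to the equality of sets $\{B_0,\ldots,B_D\}=\{A_0,\ldots,A_D\}$, because once the relation partition $\{R_i\}_{i=0}^D$ is seen to be the class partition of the given scheme, Theorem~\ref{dJ} will immediately return distance-regularity of $\G$.

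The main (and essentially only) obstacle is this bijection-by-counting step. I would argue that the map $j\mapsto i_j$ from $\{0,1,\ldots,D\}$ to $\{0,1,\ldots,D\}$ is a bijection. Surjectivity follows since $\G$ has diameter $D$: choosing $x,y$ with $\partial(x,y)=D$ and walking along a shortest $x$-to-$y$ path realizes every intermediate distance $0,1,\ldots,D$, so each $A_i$ is nonzero, which forces the fiber $\{j:i_j=i\}$ to be nonempty. Since there are exactly $D+1$ indices $j$ distributed among the $D+1$ nonempty fibers indexed by $i$, every fiber must be a singleton. Consequently there is a unique $j(i)$ with $B_{j(i)}=A_i$, so $(X,\{R_i\}_{i=0}^D)$ coincides with the given commutative $D$-class scheme, and Theorem~\ref{dJ} completes the proof.
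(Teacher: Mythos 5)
Your proposal is correct and follows essentially the same route as the paper: both directions reduce to Characterization~A by identifying the distance-$i$ matrices with the standard basis of the scheme, using Proposition~\ref{jE}(i) for the forward direction and Proposition~\ref{Ha} for the converse. The only difference is cosmetic: where the paper simply asserts that $\{A_0,\ldots,A_D\}$ equals the standard basis $\{B_i\}_{i=0}^D$, you supply the explicit fiber-counting argument (each $A_i$ is a nonempty sum of $B_j$'s, and $D+1$ classes distributed over $D+1$ nonempty fibers forces singletons), which is a welcome filling-in of a step the paper leaves implicit.
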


\begin{proof}$(\La)$ Assume that $\A$ is the Bose--Mesner algebra of a $D$-class commutative association scheme $\XXi$. Let $\{B_i\}_{i=0}^D$ denote the standard basis of $\A$ and $A$ denote the adjacency matrix of $\G$. Since $A$ generates $\M$, $\G$ has $D+1$ distinct eigenvalues (see, for example, \cite[Corollary~3.5]{MPc}). By Proposition~\ref{Ha}(ii), every distance-$i$ matrix $A_i$ $(0\le i\le D)$ of $\G$ belongs to $\A$. Since $\{A_i\}_{i=0}^D$ is a linearly independent set, $\A=\Span\{A^0,A^1,\ldots,A^D\}=\Span\{A_0,A_1,\ldots,A_D\}$. Moreover, the set $\{A_0,A_1,\ldots,A_D\}$ is equal to the standard basis $\{B_i\}_{i=0}^D$, i.e., we can permute indices in such a way that $A_i=B_i$ $(i\le i\le D)$. The result now follows from Theorem~\ref{dJ} (Characterization~A).

$(\Ra)$ Assume that $\G$ is a distance-regular digraph. The result follows immediately from Proposition~\ref{jE}(i) and Theorem~\ref{dJ} (Characterization~A).
\end{proof}



\section{Characterization involving the adjacency matrix and the distance-$\boldsymbol{i}$ matrices of a graph}
\label{eA}

In this section, we prove that a graph $\G$ satisfies properties (i), (ii) of Problem~\ref{Aa} if and only if $\G$ is a distance-regular digraph (see Theorem~\ref{eD}).

\begin{theorem}[Characterization~C]
\label{eD}
Let $\G$ denote a simple strongly connected digraph with vertex set $X$,  diameter $D$, and let $\{A_0,A:=A_1,A_2,\ldots,A_D\}$ denote the distance-$i$ matrices of $\G$. Then, $\G$ is a distance-regular digraph if and only if $A^\top\in\{A_0,A_1,\ldots,A_D\}$ and any of the following holds.
\begin{enumerate}[label=\rm(\roman*)]
\item $A$ acts by right multiplication as a linear operator on the vector space $\Span\{A_0,A_1,\ldots,A_D\}$.
\item There exist complex scalars $p^h_{i1}$ $(0\le i,h\le D)$ such that $A_iA=\sum_{h=0}^D p^h_{i1} A_h$ $(0\le i\le D)$.
\end{enumerate}
\end{theorem}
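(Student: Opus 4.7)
The first observation is that conditions (i) and (ii) are mere rephrasings of one another: since $\{A_0,A_1,\ldots,A_D\}$ is a set of nonzero, pairwise $\circ$-disjoint $(0,1)$-matrices (every off-diagonal position appears in exactly one $A_i$ because $\sum_i A_i = J$), it is linearly independent, and hence a basis of its own span. Consequently $A_iA$ lies in $\Span\{A_0,\ldots,A_D\}$ for every $i$ (which is (i) applied to each basis element) if and only if we can find scalars $p^h_{i1}$ with $A_iA=\sum_h p^h_{i1}A_h$, which is (ii). So I will prove the theorem assuming (ii) in the backward direction.

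For the forward direction, assume $\G$ is a distance-regular digraph. Then $A^\top\in\{A_0,\ldots,A_D\}$ is exactly Corollary~\ref{dH}, while $A_iA=\sum_h p^h_{i1}A_h$ is Lemma~\ref{dI}; so (ii) holds and thus (i) holds.

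For the backward direction, assume $A^\top\in\{A_0,\ldots,A_D\}$, say $A^\top=A_h$, and that (ii) holds. I want to recover the combinatorial content of distance-regularity by reading the two algebraic identities entrywise. For arbitrary $x,y\in X$,
$$
(A_iA)_{xy}=\sum_{z\in X}(A_i)_{xz}(A)_{zy}=|\G_i^\ra(x)\cap\G_1^\la(y)|,
$$
and similarly, using $A^\top=A_h$ together with the product rule for $A_iA_h$ (which exists by Corollary~\ref{eF}, itself applicable thanks to (ii)),
$$
(A_iA_h)_{xy}=\sum_{z\in X}(A_i)_{xz}(A)_{yz}=|\G_i^\ra(x)\cap\G_1^\ra(y)|.
$$
Now if $\partial(x,y)=j$ then $(A_r)_{xy}=\delta_{rj}$, so the identities $A_iA=\sum_r p^r_{i1}A_r$ and $A_iA_h=\sum_r p^r_{ih}A_r$ yield respectively
$$
|\G_i^\ra(x)\cap\G_1^\la(y)|=p^j_{i1},\qquad |\G_i^\ra(x)\cap\G_1^\ra(y)|=p^j_{ih}.
$$
Both constants depend only on $i$ and $j=\partial(x,y)$, not on the individual vertices.

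To finish, I fix $x\in X$ and consider the distance partition $\{\G_0^\ra(x),\ldots,\G_D^\ra(x)\}$. For any $y\in\G_j^\ra(x)$ the two displays above give, with the notation of Definition~\ref{jA} and \eqref{jB},
$$
d^\la_{ji}(x)=p^j_{i1},\qquad d^\ra_{ji}(x)=p^j_{ih}\qquad(0\le i,j\le D),
$$
so the partition is equitable and its parameters are independent of the choice of $x$. This is precisely Definition~\ref{dD}, establishing that $\G$ is distance-regular. The main conceptual step is recognizing that the single algebraic equation (ii), together with the existence of a distance-$i$ matrix equal to $A^\top$, simultaneously controls both the ``outgoing'' and ``incoming'' counts between cells of the distance partition; no delicate induction is needed because Proposition~\ref{eL} and Corollary~\ref{eF} have already established the required closure properties of $\Span\{A_0,\ldots,A_D\}$ under multiplication.
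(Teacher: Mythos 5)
Your proposal is correct, and the forward direction coincides with the paper's (Corollary~\ref{dH} plus Lemma~\ref{dI}), but your backward direction takes a genuinely different and more direct route. The paper verifies the axioms (AS1)--(AS5) for $\Span\{A_0,\ldots,A_D\}$ using Proposition~\ref{eL}, Corollary~\ref{eF} and Proposition~\ref{Cg}, concludes that $\A$ is the Bose--Mesner algebra of a commutative $D$-class association scheme, and then invokes Theorem~\ref{eB} (Characterization~B), which itself rests on Theorem~\ref{dJ} and ultimately on the imported Theorem~\ref{pi} from \cite{MPc}. You instead use Corollary~\ref{eF} only to obtain the single extra identity $A_iA_h=\sum_r p^r_{ih}A_r$ for the index $h$ with $A^\top=A_h$, and then read both identities entrywise: since the $A_r$ are pairwise disjoint nonzero $01$-matrices summing to $J$, the coefficients are unique and equal to $(A_iA)_{xy}=|\G_i^\ra(x)\cap\G_1^\la(y)|$ and $(A_iA_h)_{xy}=|\G_i^\ra(x)\cap\G_1^\ra(y)|$ at any position with $\partial(x,y)=j$, which are exactly the parameters $d^\la_{ji}(x)$ and $d^\ra_{ji}(x)$ of Definitions~\ref{jA} and~\ref{dD}. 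This lands directly on the equitable-partition definition of distance-regularity without passing through the association-scheme characterizations, so it is self-contained and avoids the dependence on Theorem~\ref{pi}; what the paper's longer route buys is the intermediate conclusion that $\A$ is a Bose--Mesner algebra, which it reuses in several of the other characterizations. The only point worth making explicit in your write-up is the (routine, and also glossed over by the paper) verification that the relevant cells $\G_j^\ra(x)$ are nonempty for every $x$, so that the global constants $p^j_{i1}$, $p^j_{ih}$ really are the corresponding parameters of a full $(D+1)$-cell partition for each vertex; this follows, for instance, from $p^{j}_{j-1,1}\neq 0$ in Proposition~\ref{Cg}(ii) by induction on $j$.
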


\begin{proof} (i) We prove that $\G$ is a distance-regular digraph if and only if $A^\top\in\{A_0,A_1,\ldots,A_D\}$ and $A$ acts by right multiplication as a linear operator on the vector space $\Span\{A_0,A_1,\ldots,A_D\}$.

$(\La)$ Assume that $A^\top\in\{A_0,A_1,\ldots,A_D\}$ and $A$ acts by right multiplication as a linear operator on the vector space $\Span\{A_0,A_1,\ldots,A_D\}$. This yields that there exist complex scalars $p^h_{i1}$ $(0\le i,h\le D)$ such that 
$$
A_iA=\sum_{h=0}^D p^h_{i1} A_h
\qquad(0\le i\le D).
$$
In the next few lines, we prove that the algebra $\M=\Span\{A_0,A_1,\ldots,A_D\}$ generated by the distance-$i$ matrices is the Bose-Mesner algebra of a $D$-class $P$-polynomial commutative association schemes. By definition of distance-$i$ matrices, we have (AS1) $A_0=I$ and (AS2) $\sum_{i=0}^D A_i=J$. By Proposition~\ref{eL}, property (AS3) $A_i^\top\in\{A_0,A_1,\ldots,A_D\}$ holds. By Corollary~\ref{eF}, (AS4) the vector space $\M$ is closed with respect to the ordinary matrix multiplication. In the end, by Proposition~\ref{Cg}, we have two things: (AS5) $A_iA_j=A_jA_i$ $(0\le i,j\le D)$; and the ordering $(A_0,A=A_1,\ldots,A_D)$ is a $P$-polynomial ordering. The result now follows from Theorem~\ref{eB} (Characterization~B).

$(\Ra)$ Assume that $\G$ is a distance-regular digraph. The result follows immediately from Corollary~\ref{dH} and Lemma~\ref{dI}.

(ii) Immediate from (i).
\end{proof}

\begin{remark}{\rm
In Theorem~\ref{eD}, we have restriction that $A$ acts by right multiplication as a linear operator on the vector space $\Span\{A_0,A_1,\ldots,A_D\}$. The claim remains true if we replace right multiplication by left multiplication, i.e., 
$\G$ is a distance-regular digraph if and only if $A^\top\in\{A_0,A_1,\ldots,A_D\}$ and $A$ acts by left multiplication as a linear operator on the vector space $\Span\{A_0,A_1,\ldots,A_D\}$ (see Section~\ref{na}). 
}\end{remark}

Our Characterizations C, C', C'' and D (see Theorems~\ref{eD}, \ref{eQ}, \ref{eP} and \ref{fB}) are in some sense very similar to result of {\sc Comellas et al.} from \cite{CfFgM} (see Subsection~\ref{zm}).

\begin{theorem}[Characterization~C']
\label{eQ}
Let $\G$ denote a digraph with vertex set $X$, diameter $D$, and let $\{A_0,A:=A_1,A_2,\ldots,A_D\}$ denote the distance-$i$ matrices of $\G=\G(A)$. Then, $\G$ is a distance-regular digraph if and only if $A^\top\in\{A_0,A_1,\ldots,A_D\}$ and $\{A_0,A_1,\ldots,A_D\}$ is a basis of $\A$.
\end{theorem}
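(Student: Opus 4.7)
The plan is to deduce Theorem~\ref{eQ} directly from the machinery already assembled, in particular from Theorem~\ref{eD} (Characterization~C), Corollary~\ref{dH}, and Proposition~\ref{jE}. The statement is essentially a reformulation of Characterization~C with the condition ``$A$ acts by right multiplication on $\Span\{A_0,\ldots,A_D\}$'' repackaged as ``$\{A_0,\ldots,A_D\}$ is a basis of $\A$.''

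For the forward direction $(\Ra)$, assume $\G$ is a distance-regular digraph. Corollary~\ref{dH} immediately yields $A^\top=A_1^\top\in\{A_0,A_1,\ldots,A_D\}$. By Proposition~\ref{jE}(i) there exist polynomials $p_i$ with $\deg p_i\le i$ such that $A_i=p_i(A)$, so $\{A_0,A_1,\ldots,A_D\}\subseteq\A$. Since the $A_i$ are mutually disjoint nonzero $01$-matrices, they are linearly independent. By Corollary~\ref{oD}, $\G$ has exactly $D+1$ distinct eigenvalues, hence $\dim(\A)=D+1$, and $\{A_0,\ldots,A_D\}$ is therefore a basis of $\A$.

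For the reverse direction $(\La)$, assume $A^\top\in\{A_0,\ldots,A_D\}$ and that $\{A_0,A_1,\ldots,A_D\}$ is a basis of $\A$. Since $A\in\A$ and each $A_i\in\A$, and $\A$ is closed under ordinary matrix multiplication, we have $A_iA\in\A$ for every $i$. Expanding $A_iA$ in the given basis, we obtain complex scalars $p^h_{i1}$ $(0\le i,h\le D)$ such that
$$
A_iA=\sum_{h=0}^D p^h_{i1}\,A_h\qquad(0\le i\le D).
$$
This is precisely condition (ii) of Theorem~\ref{eD}, so $\G$ is a distance-regular digraph by Characterization~C.

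There is no significant obstacle here: the only delicate point is ensuring that $\dim(\A)=D+1$ in the forward direction, which is handled cleanly by Corollary~\ref{oD}. Everything else is an immediate repackaging of previously established facts.
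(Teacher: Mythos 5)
Your proposal is correct and follows essentially the same route as the paper: the reverse direction reduces to Characterization~C exactly as in the paper's proof, and the forward direction combines Corollary~\ref{dH}, the polynomial expressions for the $A_i$'s, and the eigenvalue count of Corollary~\ref{oD} (the paper obtains the inclusion $\A\subseteq\Span\{A_0,\ldots,A_D\}$ via Lemma~\ref{dI} rather than $\Span\{A_0,\ldots,A_D\}\subseteq\A$ via Proposition~\ref{jE}(i), but both conclude by the same dimension count). The only point to note is that the step ``$D+1$ distinct eigenvalues implies $\dim(\A)=D+1$'' implicitly uses that $A$ is diagonalizable, which here follows from normality via Proposition~\ref{eJ}; the paper glosses over this in exactly the same way.
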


\begin{proof}
$(\La)$ Assume that $A^\top\in\{A_0,A_1,\ldots,A_D\}$ and $\{A_0,A_1,\ldots,A_D\}$ is a basis of $\A$. Then $A_iA\in\Span\{A_0,A_1,\ldots,A_D\}$, and the result follows from Theorem~\ref{eD} (Characterization~C).

$(\Ra)$ Assume that $\G=\G(A)$ is a distance-regular digraph. By Corollary~\ref{dH}, $A^\top\in\{A_0,A=A_1,\ldots,A_D\}$. By Lemma~\ref{dI}, $A_iA\in\Span\{A_0,A_1,\ldots,A_D\}$ which yields $\A\subseteq\Span\{A_0,A_1,\ldots,A_D\}$. From Corollary~\ref{oD}, $A$ has $D+1$ distinct eigenvalues. The result follows.
\end{proof}

\begin{theorem}[Characterization~C'']
\label{eP}
Let $\G$ denote a simple strongly connected digraph with vertex set $X$,  diameter $D$, and let $\{A_0,A:=A_1,A_2,\ldots,A_D\}$ denote the distance-$i$ matrices of $\G=\G(A)$. Then, $\G$ is a distance-regular digraph if and only if at least one of the following hold.
\begin{enumerate}[label=\rm(\roman*)]
\item $A^\top\in\{A_0,A_1,\ldots,A_D\}$, and there exist complex scalars $p^h_{ij}$ $(0\le i,j,h\le D)$ such that $A_iA_j=\sum_{h=0}^D p^h_{ij} A_h$ $(0\le i,j\le D)$. 
\item $A_i^\top\in\{A_0,A_1,\ldots,A_D\}$ $(0\le i\le D)$, and $A_iA_j=\sum_{h=0}^D p^h_{ij} A_h$ $(0\le i,j\le D)$ holds for some scalars $p^h_{ij}$ $(0\le i,j,h\le D)$.
\item $A^\top\in\{A_0,A_1,\ldots,A_D\}$ and there exist complex scalars $p^h_{ij}$ $(0\le i,j,h\le D)$ such that $|\G_i^\ra(x)\cap\G_j^\la(y)|=p^h_{ij}$, for any $x\in X$, $y\in\G_h^\ra(x)$ $(0\le i,j,h\le D)$.
\end{enumerate}
\end{theorem}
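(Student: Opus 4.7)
The plan is to prove the biconditional by showing that distance-regularity implies each of (i), (ii), (iii), and that conversely any one of (i), (ii), (iii) forces distance-regularity, with the bulk of the backward direction reduced to Theorem~\ref{eD} (Characterization~C) via the identity
$$(A_iA_j)_{xy}=\sum_{z\in X}(A_i)_{xz}(A_j)_{zy}=|\G_i^\ra(x)\cap\G_j^\la(y)|.$$
This identity shows that condition (iii) is in fact just the entrywise translation of the polynomial multiplication assumption in (i). So the three conditions are closely interwoven and the whole argument collapses onto the previously established Characterization~C.

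For the forward direction, I would first quote Corollary~\ref{dH} to obtain $A^\top\in\{A_0,\ldots,A_D\}$ and more generally $A_i^\top\in\{A_0,\ldots,A_D\}$, and then invoke Proposition~\ref{jE}\ref{cB} to obtain the structure constants $p^h_{ij}$ satisfying $A_iA_j=\sum_h p^h_{ij}A_h$. This immediately gives (i) and (ii). For (iii), I would use that $\G$ distance-regular means the partition $\{\G_\ell^\ra(x)\}_{\ell=0}^D$ is equitable with parameters independent of $x$; then for $(x,y)\in R_h$ the count $|\G_i^\ra(x)\cap\G_j^\la(y)|$ equals $(A_iA_j)_{xy}=p^h_{ij}$, using the displayed identity and the fact that Proposition~\ref{jE} produces well-defined scalars $p^h_{ij}$.

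For the backward direction, the key observation is that each of (i), (ii), (iii) reduces to the hypothesis of Theorem~\ref{eD}. If (i) holds, specializing $j=1$ gives $A_iA\in\Span\{A_0,\ldots,A_D\}$ for all $i$, so $A$ acts by right multiplication on that span; together with $A^\top\in\{A_0,\ldots,A_D\}$, Theorem~\ref{eD} yields distance-regularity. If (ii) holds, then in particular $A^\top=A_1^\top\in\{A_0,\ldots,A_D\}$ and the same $j=1$ specialization gives closure under right multiplication by $A$, so Theorem~\ref{eD} again applies. If (iii) holds, the displayed identity shows $(A_iA_j)_{xy}=p^h_{ij}$ whenever $\partial(x,y)=h$; summing via $\sum_h A_h=J$ and the fact that the $A_h$'s have disjoint supports, this gives $A_iA_j=\sum_{h=0}^D p^h_{ij}A_h$, which is precisely condition (i), and we are done by the previous case.

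I do not expect any serious obstacle here beyond bookkeeping: the main care required is to keep $\G^\ra$ and $\G^\la$ straight, so that the matrix product indeed counts $|\G_i^\ra(x)\cap\G_j^\la(y)|$ rather than some other intersection. Everything then collapses onto the already established Characterization~C, so the theorem follows without any genuinely new combinatorial work.
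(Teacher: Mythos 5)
Your proof is correct and takes essentially the same approach as the paper: both treat the theorem as an immediate consequence of the earlier characterizations, with the backward direction obtained by specializing $j=1$ (plus noting that each of (i)--(iii) supplies $A^\top\in\{A_0,\ldots,A_D\}$) and the equivalence of (iii) with (i) coming from the entrywise identity $(A_iA_j)_{xy}=|\G_i^\ra(x)\cap\G_j^\la(y)|$. The only cosmetic difference is that the paper routes through Characterizations~B and C' where you invoke Characterization~C; these are interchangeable at this point in the paper.
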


\begin{proof}
(i), (ii). Immediate from Propositions~\ref{Cg} and \ref{eL}, Theorem~\ref{eB} (Characterization~B), and Theorem~\ref{eQ} (Characterization~C').

(iii) Immediate from (i).
\end{proof}

\section{Characterization involving distance-$\boldsymbol{i}$ polynomials}
\label{fA}

Let $\G=\G(A)$ denote a strongly connected (di)graph with adjacency matrix $A$, and let $\{A_0,A:=A_1,A_2,\ldots,A_D\}$ denote the distance-$i$ matrices of $\G$. Polynomials $\{p_i\}_{i=0}^D$ which have the property that $A_i=p_i(A)$ $(0\le i\le D)$ are called {\em distance-$i$ polynomials} (note that $p_0(t)=1$ and $p_1(t)=t$). In this section, we show that $\G$ is a distance-regular digraph if and only if $A^\top\in\{A_0,A_1,\ldots,A_D\}$ and there exists a sequence of distance-$i$ polynomials $(p_i)_{0\le i\le D}$ with the property that $\dgr p_i\le i$ $(0\le i\le D)$.

As we already mention in Section~\ref{eA} our Characterization~D (Theorems~\ref{fB}) is in some sense very similar to the result of {\sc Comellas et al.} from \cite{CfFgM} (see Subsection~\ref{zm}).

\begin{theorem}[Characterization D]
\label{fB}
Let $\G$ denote a simple strongly connected digraph with vertex set $X$,  diameter $D$, and let $\{A_0,A:=A_1,A_2,\ldots,A_D\}$ denote the distance-$i$ matrices of $\G=\G(A)$. Then $\G$ is a distance-regular digraph if and only if $A^\top\in\{A_0,A_1,\ldots,A_D\}$ and there exists a sequence of distance-$i$ polynomials $(p_i)_{0\le i\le D}$ such that $\dgr p_i\le i$.
\end{theorem}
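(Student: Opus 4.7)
The plan is to establish the backward implication by showing that $\{A_0, A_1, \ldots, A_D\}$ is a basis of the adjacency algebra $\A$; once this is in hand, Characterization~C' (Theorem~\ref{eQ}), applied with the hypothesis $A^\top\in\{A_0,\ldots,A_D\}$, delivers distance-regularity. The forward implication will be just bookkeeping using two earlier results.

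For the forward direction ($\Ra$), I would simply cite Corollary~\ref{dH} for $A^\top\in\{A_0,\ldots,A_D\}$ and Proposition~\ref{jE}(i) for the existence of polynomials $p_i$ with $\deg p_i\le i$ satisfying $A_i=p_i(A)$.

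For the backward direction ($\La$), I would first observe that the $A_i$'s are pairwise disjoint $01$-matrices whose sum is $J$, hence linearly independent; since each $A_i=p_i(A)$ lies in $\A$, this yields $\dim\A\ge D+1$. For the reverse inequality $\dim\A\le D+1$ I would exhibit an annihilating polynomial for $A$ of degree $D+1$. The natural candidate is built from the $p_i$'s themselves: setting $q(t):=1+t+p_2(t)+\cdots+p_D(t)$, one has $q(A)=\sum_{i=0}^{D}A_i=J$ with $\deg q\le D$, so in particular $J\in\A$. Theorem~\ref{Ra} then implies that $\G$ is regular with $A\jj=k\jj$ for some positive integer $k$, and hence $(A-kI)q(A)=(A-kI)J=kJ-kJ=0$. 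Therefore $(t-k)q(t)$ is a polynomial of degree at most $D+1$ that vanishes on $A$, which gives $\dim\A\le D+1$. Combining both bounds yields $\dim\A=D+1$, so the linearly independent set $\{A_0,\ldots,A_D\}$ is a basis of $\A$, and Characterization~C' completes the argument.

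The main obstacle is the upper bound $\dim\A\le D+1$: the hypothesis $\deg p_i\le i$ alone only shows $\Span\{A_0,\ldots,A_D\}\subseteq\Span\{I,A,\ldots,A^D\}$, which a priori says nothing about higher powers of $A$. The non-obvious step is to add the $p_i$'s together to produce a polynomial $q$ of degree at most $D$ with $q(A)=J$, and then to exploit regularity of $\G$ (itself a consequence of $J\in\A$ via Theorem~\ref{Ra}) in order to multiply by $(t-k)$ and squeeze out an annihilating polynomial of precisely the right degree.
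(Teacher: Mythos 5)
Your proof is correct, and there is no circularity: Corollary~\ref{dH}, Proposition~\ref{jE} and Theorem~\ref{eQ} are all established before Theorem~\ref{fB}. The forward direction coincides with the paper's (the paper cites Lemma~\ref{dI}, Proposition~\ref{Cg} and Corollary~\ref{dH}; your citation of Proposition~\ref{jE}(i) is just the packaged form of the first two). In the backward direction you take a genuinely different route at the key step. The paper invokes Proposition~\ref{zi} (the imported result of {\sc Comellas et al.}) to conclude that $\{A_0,\ldots,A_D\}$ is a basis of $\A$, then asserts it is routine that $\A$ is the Bose--Mesner algebra of a commutative association scheme and closes with Characterization~B (Theorem~\ref{eB}). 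You instead prove the basis fact from scratch: linear independence comes free from disjointness of supports, and for the bound $\dim\A\le D+1$ you set $q:=\sum_{i=0}^{D}p_i$, note $q(A)=J$ with $\dgr q\le D$, deduce regularity from Theorem~\ref{Ra}, and exhibit the nonzero annihilating polynomial $(t-k)q(t)$ of degree at most $D+1$; you then finish with Characterization~C$'$ (Theorem~\ref{eQ}), whose hypothesis is exactly the basis property, so no further work is needed. Your version is self-contained and slightly more robust: Proposition~\ref{zi}(ii) is phrased with $\dgr p_i=i$, so the paper's appeal to it tacitly uses that $\dgr p_i\le i$ forces $\dgr p_i=i$ (since $(A^j)_{xy}=0$ whenever $\partial(x,y)>j$ while $(A_i)_{xy}=1$ at distance $i$), a point your argument never needs. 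What the paper's route buys is brevity through reuse of known machinery; what yours buys is a complete, elementary argument that also makes explicit why $J\in\A$ and why the minimal polynomial of $A$ has degree exactly $D+1$.
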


\begin{proof}
$(\La)$ Assume that $A^\top\in\{A_0,A_1,\ldots,A_D\}$ and that there exist polynomials $p_i(t)$ $(0\le i\le D)$ with $\dgr p_i\le i$ such that $A_i=p_i(A)$. By Proposition~\ref{zi}, the set of distance-$i$ matrices $\{A_i\}_{i=0}^D$ is a basis of the adjacency algebra $\A$ of $\G$, and with it $d=D$. Now it is routine to show that $\A$ is the Bose--Mesner algebra of a $P$-polynomial commutative association scheme (see Section~\ref{Av}). The result follows from Theorem~\ref{eB} (Characterization~B).




$(\Ra)$ Assume that $\G=\G(A)$ is a distance-regular digraph. The result follows immediately from Lemma~\ref{dI}, Proposition~\ref{Cg} and Corollary~\ref{dH}.
\end{proof}

\section{Characterization involving walks}
\label{hA}

In this section, we show that a strongly connected digraph $\G$ is a distance-regular digraph if and only if $A^\top\in\{A_0,A_1,\ldots,A_D\}$ and for each non-negative integer $\ell$, the number of walks of length $\ell$ in $\G$ from a vertices $x$ to a vertex $y\in\G_h^{\ra}(x)$ only depends on $h$ (on the distance from $x$ to $y$).

Our Characterization~E (see Theorem~\ref{fE}) is in some sense very similar to the result of {\sc Comellas et al.} from \cite{CfFgM} (see Subsection~\ref{zm}).

\begin{theorem}[Characterization~E]
\label{fE}
Let $\G$ denote a digraph with vertex set $X$, diameter $D$, and let $\{A_0,A:=A_1,A_2,\ldots,A_D\}$ denote the distance-$i$ matrices of $\G=\G(A)$. Then, $\G$ is a distance-regular digraph if and only if $A^\top\in\{A_0,A_1,\ldots,A_D\}$ and for each non-negative integer $\ell$ $(0\le\ell\le D)$, the number of walks of length $\ell$ in $\G$ between two vertices $x,y\in X$ only depends on $h=\partial(x,y)$ (distance from $x$ to $y$).
\end{theorem}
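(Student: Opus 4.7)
The plan is to reduce both directions to results already in hand: Proposition~\ref{zi} (from Comellas et al.) gives an equivalence between the ``walks only depend on distance'' property and the existence of distance-$i$ polynomials $p_i$ with $\dgr p_i = i$, while Characterization~D (Theorem~\ref{fB}) tells us that, together with the transpose-closure condition $A^\top\in\{A_0,\ldots,A_D\}$, the existence of distance-$i$ polynomials of degree at most $i$ is equivalent to $\G$ being distance-regular. So the theorem will follow by chaining these two equivalences together.

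For the direction $(\Ra)$, I would start from the assumption that $\G$ is distance-regular. Corollary~\ref{dH} immediately gives $A^\top\in\{A_0,A_1,\ldots,A_D\}$, so the transpose condition is taken care of. To obtain the walk-counting condition, I would invoke Proposition~\ref{jE}(i) to produce polynomials $p_i$ with $\dgr p_i\le i$ such that $A_i=p_i(A)$. Comparing leading coefficients in the recursion $A_iA=\sum_{h=0}^{i+1}p^h_{i1}A_h$ from Proposition~\ref{Cg}, together with the fact that $p^{i+1}_{i1}\ne 0$, one sees that in fact $\dgr p_i = i$. This puts us in the situation of Proposition~\ref{zi}(ii), and the equivalence (ii)$\Leftrightarrow$(i) there yields the walk-counting property.

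For the direction $(\La)$, I would assume both $A^\top\in\{A_0,A_1,\ldots,A_D\}$ and that, for each $\ell$ with $0\le\ell\le D$, the entry $(A^\ell)_{xy}$ depends only on $\partial(x,y)$ (using Lemma~\ref{hB} to restate ``walks of length $\ell$'' as entries of $A^\ell$). The walk-counting hypothesis is exactly condition (i) of Proposition~\ref{zi}, so by the equivalence (i)$\Leftrightarrow$(ii) there, one obtains polynomials $p_i$ with $\dgr p_i = i$ (in particular $\dgr p_i\le i$) satisfying $A_i=p_i(A)$. Together with the transpose-closure hypothesis, this is precisely the input to Characterization~D (Theorem~\ref{fB}), which yields that $\G$ is distance-regular.

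There is no real obstacle here; the characterization is essentially a corollary of Characterization~D and Proposition~\ref{zi}, which do the heavy lifting. The only subtle point worth stating carefully is the mild gap between ``$\dgr p_i\le i$'' (the hypothesis in Characterization~D) and ``$\dgr p_i = i$'' (the conclusion in Proposition~\ref{zi}); this is not an obstacle but a remark, since the former is weaker and the combinatorial conditions force the latter. I would also point out explicitly that Proposition~\ref{zi} itself does not require the transpose-closure condition, which is why the latter must be assumed separately in both directions (and why it cannot be dropped: the walk-counting condition alone does not imply normality of $A$, as already noted in Proposition~\ref{zj}).
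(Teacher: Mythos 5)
Your proposal is correct and follows essentially the same strategy as the paper's: both directions are short reductions through Proposition~\ref{zi} combined with one of the earlier characterizations. The only (immaterial) difference is that you route the backward direction through Characterization~D (Theorem~\ref{fB}) via the degree-$i$ polynomials and the forward direction through Proposition~\ref{jE} and Proposition~\ref{zi}(ii)$\Rightarrow$(i), whereas the paper uses Characterization~C (Theorem~\ref{eD}) via the basis property and Characterization~C'' (Theorem~\ref{eP}) together with Lemma~\ref{hB}, respectively.
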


\begin{proof}
$(\La)$ Assume that $A^\top\in\{A_0,A_1,\ldots,A_D\}$ and that, for each non-negative integer $\ell$, the number of walks of length $\ell$ in $\G$ between two vertices $x,y\in X$ only depends on $h=\partial(x,y)$ (distance from $x$ to $y$). By Proposition~\ref{zi}, the set of distance-$i$ matrices $\{A_i\}_{i=0}^D$ is a basis of the adjacency algebra $\A$ of $\G$, and with it we have
$A_iA\in\Span\{A_0,A_1,\ldots,A_D\}$. The result follows from Theorem~\ref{eD} (Characterization~C).

$(\Ra)$ Assume that $\G$ is a distance-regular digraph. The result is not hard to deduce from Theorem~\ref{eP} (Characterization~C'') and Lemma~\ref{hB}.
\end{proof}


\section{Distance-regularity with respect to the partition $\boldsymbol{\{\G_i^\la(x)\}_{i=0}^D}$}
\label{na}

Let $\G$ denote a simple strongly connected digraph with vertex set $X$ and diameter $D$. In this section, we show that $\G$ is a distance-regular digraph if and only if for every vertex $x\in X$, the partition $\{\G^\la_0(x),\G^\la_1(x),\ldots,\G^\la_D(x)\}$ of $X$ is equitable and its corresponding parameters do not depend on the choice of $x$.

\begin{proposition}
\label{nb}
Let $\G$ denote a simple strongly connected digraph with vertex set $X$,  diameter $D$, and let $\{A_0,A:=A_1,A_2,\ldots,A_D\}$ denote the distance-$i$ matrices of $\G$. If there exist complex scalars $r^h_{i1}$ $(0\le i,j,h\le D)$ such that 
\begin{equation}
\label{nc}
AA_i=\sum_{h=0}^D r^h_{1i} A_h
\qquad(0\le i,j\le D),
\end{equation}
then the following hold.
\begin{enumerate}[label=\rm(\roman*)]
\item For the scalars $r^j_{1i}$ $(0\le i,j\le D)$, $r^j_{1i}=0$ if $j>i+1$; and $r^j_{1i}\ne 0$ if $j=i+1$.
\item There exist polynomials $p_i$ with $\deg p_i\le i$ $(0\le i\le D)$ such that $A_i=p_i(A)$.
\item $A_iA_j=A_jA_i$ $(0\le i,j\le d)$.
\end{enumerate}
\end{proposition}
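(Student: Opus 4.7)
The plan is to mirror the proof of Proposition~\ref{Cg} almost verbatim, swapping the roles of rows and columns (that is, left multiplication by $A$ rather than right multiplication). The only real work is to extract the correct combinatorial meaning of the entries of $AA_i$.

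First I would compute, for any $x,y\in X$ and with $\partial(x,y)=j$, the $(x,y)$-entry of both sides of \eqref{nc}. The right-hand side gives $r^{j}_{1i}$ since $(A_j)_{xy}=1$ and $(A_h)_{xy}=0$ for $h\neq j$. The left-hand side yields
$$
(AA_i)_{xy}=\sum_{z\in X}(A)_{xz}(A_i)_{zy}=|\G_1^\ra(x)\cap\G_i^\la(y)|.
$$
Hence the hypothesis is equivalent to: for every pair $x,y$ with $\partial(x,y)=j$, the number $|\G_1^\ra(x)\cap\G_i^\la(y)|$ depends only on $i$ and $j$, and equals $r^{j}_{1i}$.

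For (i), I would argue by contradiction as in Proposition~\ref{Cg}(i). If $r^{j}_{1i}\neq 0$ for some $j>i+1$, then for any $x,y$ with $\partial(x,y)=j$ there exists $z\in\G_1^\ra(x)\cap\G_i^\la(y)$, giving $\partial(x,y)\le \partial(x,z)+\partial(z,y)= 1+i<j$, a contradiction. For $r^{i+1}_{1i}\neq 0$ (when $i+1\le D$), pick any pair $x,y$ with $\partial(x,y)=i+1$ and take a shortest directed path $x=x_0,x_1,\ldots,x_{i+1}=y$; then $x_1\in\G_1^\ra(x)\cap\G_i^\la(y)$, so $r^{i+1}_{1i}\ge 1$.

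For (ii), using (i), the relation \eqref{nc} collapses to $AA_i=\sum_{h=0}^{i+1}r^{h}_{1i}A_h$, and since $r^{i+1}_{1i}\neq 0$, one can solve for $A_{i+1}$ as a linear combination of $A_0,A_1,\ldots,A_i$ and $AA_i$. Starting from $A_0=I$ and $A_1=A$, an easy induction on $i$ (identical to the one in the proof of Proposition~\ref{Cg}(iii) and of \cite[Proposition~2.7.1]{BCN}) shows that each $A_i$ is a polynomial $p_i(A)$ of degree at most $i$. Finally, (iii) follows immediately: since $A_i=p_i(A)$ and $A_j=p_j(A)$ are both polynomials in the single matrix $A$, we have $A_iA_j=p_i(A)p_j(A)=p_j(A)p_i(A)=A_jA_i$. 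No step poses a genuine obstacle; the only thing to be careful about is that the combinatorial interpretation coming from left multiplication involves $\G_1^\ra(x)\cap\G_i^\la(y)$ (rather than the $\G_i^\ra(x)\cap\G_1^\la(y)$ appearing in the right-multiplication case of Proposition~\ref{Cg}), but the triangle-inequality argument goes through unchanged.
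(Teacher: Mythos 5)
Your proposal is correct and follows essentially the same route as the paper: both read off the entries of $AA_i$ to identify $r^j_{1i}$ as the cardinality $|\G_1^\ra(x)\cap\G_i^\la(y)|$ for $\partial(x,y)=j$ (the paper phrases this via the $yx$-entry and the partition $\{\G_h^\la(x)\}_{h=0}^D$, which is just a relabeling of your triangle-inequality argument), then collapse \eqref{nc} to a sum up to $h=i+1$ and run the standard induction of Proposition~\ref{Cg}(iii),(iv). No discrepancies worth noting.
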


\begin{proof}
The proof is similar to the proof of Proposition~\ref{Cg}.

(i) Pick $x\in X$. For any $y\in\G_j^\la(x)$, taking the $yx$-entry of the left side of \eqref{nc}, we have
$$
|\G_i^\la(x)\cap \G_1^\ra(y)|=r^j_{1i}.
$$
By considering the partition $\{\G_i^\la(x)\}_{i=0}^D$, note that $|\G_i^\la(x)\cap \G_1^\ra(y)|\ne 0$ if and only if $y\in\bigcup_{h=0}^{i+1}\G_h^{\la}(x)$, which yields $r^j_{1i}=0$ if $j>i+1$. On the other hand, if $y\in\G^\la_{i+1}(x)$ then $|\G_i^\la(x)\cap \G_1^\ra(y)|\ne 0$, which yields $r^{i+1}_{1i}\ne 0$. 

(ii), (iii) By (i), \eqref{nc} can be written as
$$
AA_i=\sum_{\ell=0}^{i+1} r^\ell_{1i} A_\ell \qquad(0\le i\le D).
$$
The rest of the proof is routine (see the proof of Proposition~\ref{Cg}(iii),(iv)).
\end{proof}

\begin{proposition}
\label{nd}
Let $\G$ denote a simple strongly connected digraph with vertex set $X$,  diameter $D$, and let $\{A_0,A:=A_1,A_2,\ldots,A_D\}$ denote the distance-$i$ matrices of $\G$. Assume that there exist complex scalars $r^h_{1h}$ $(0\le i,h\le D)$ such that 
\begin{equation}
AA_i=\sum_{h=0}^D r^h_{1i} A_h
\qquad(0\le i\le D),
\end{equation}
and that $A^\top\in\{A_0,A_1,\ldots,A_D\}$. Then, the following hold.
\begin{enumerate}[label=\rm(\roman*)]
\item The vector space $\Span\{A_0,A_1,\ldots,A_D\}$ is closed with respect to the ordinary matrix multiplication.
\item $A$ has $D+1$ distinct eigenvalues.
\item $A_i^\top\in\{A_0,A_1,\ldots,A_D\}$ $(0\le i\le D)$.
\end{enumerate}
\end{proposition}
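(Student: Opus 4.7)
The plan is to follow the same overall scheme as in the proof of Proposition~\ref{eL}, substituting Proposition~\ref{nb} in place of Proposition~\ref{Cg} and Corollary~\ref{eF}, i.e.\ translating the right-multiplication argument to the left-multiplication setting. First, Proposition~\ref{nb}(ii) already gives polynomials $p_i$ with $\deg p_i \le i$ such that $A_i=p_i(A)$, so in particular every $A_i$ lies in $\A$. Combined with the hypothesis $A^\top \in \{A_0,A_1,\ldots,A_D\}$, writing $A^\top = A_h = p_h(A)$ shows that $A$ commutes with $A^\top$, so $A$ is normal; hence by Proposition~\ref{eJ}(ii) we have $\dim \A = d+1$, where $d+1$ is the number of distinct eigenvalues of $A$.

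Next I would prove (ii), from which (i) follows formally. For the inclusion $\Span\{A_0,\ldots,A_D\} \subseteq \A$, use $A_i = p_i(A)$. For the opposite inclusion, use the hypothesis $AA_i=\sum_h r^h_{1i}A_h$: taking $i=0$ gives $A\in\Span\{A_0,\ldots,A_D\}$, and a straightforward induction on $\ell$ shows $A^\ell\in\Span\{A_0,\ldots,A_D\}$ for every $\ell\ge 0$, so $\A\subseteq\Span\{A_0,\ldots,A_D\}$. Because the $A_i$'s are pairwise disjoint nonzero $01$-matrices they are linearly independent, so $\dim\Span\{A_0,\ldots,A_D\}=D+1$. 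Combining this with $\dim\A=d+1$ gives $d=D$, which is (ii). Since $\A = \Span\{A_0,\ldots,A_D\}$ and $\A$ is closed under ordinary matrix multiplication, (i) is now immediate.

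For (iii), each $A_i^\top = p_i(A^\top)=p_i(A_h)=p_i(p_h(A))$ is a polynomial in $A$ and therefore lies in $\A=\Span\{A_0,\ldots,A_D\}$. Thus $\{A_0^\top,\ldots,A_D^\top\}$ is a set of $D+1$ linearly independent, Hadamard-disjoint $01$-matrices in $\A$, summing to $J^\top=J$, and the same is true of $\{A_0,\ldots,A_D\}$. The main (mildly technical) obstacle is the standard uniqueness argument: if $\{F_0,\ldots,F_D\}$ and $\{E_0,\ldots,E_D\}$ are two bases of a $(D+1)$-dimensional algebra, each consisting of $\circ$-disjoint $01$-matrices with sum $J$, then expanding $F_j=\sum_i c_{ij}E_i$ and using $F_j\circ F_k=\delta_{jk}F_j$ together with $\sum_j F_j = \sum_i E_i = J$ forces $c_{ij}\in\{0,1\}$ with exactly one $1$ per row and per column, so the two sets coincide. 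Applied to $\{A_i\}$ and $\{A_i^\top\}$, this gives $\{A_i^\top\}_{i=0}^D = \{A_i\}_{i=0}^D$, which is exactly the statement of (iii). This last uniqueness step is essentially the same one used at the end of the proof of Proposition~\ref{eL}(ii), and is the only part that requires genuine work beyond applying results already established in Sections~\ref{Av} and \ref{na}.
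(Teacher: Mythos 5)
Your proof is correct and takes essentially the same route the paper intends: the paper's own proof of Proposition~\ref{nd} is just ``Routine (use Proposition~\ref{nb}; see the proofs of Corollary~\ref{eF}(i) and Proposition~\ref{eL})'', and you have carried out exactly that translation of the right-multiplication arguments to the left-multiplication setting. The only place you go beyond the paper is in spelling out the final $\circ$-disjointness argument showing $\{A_i^\top\}_{i=0}^D=\{A_i\}_{i=0}^D$, which the paper leaves implicit; your version of that step is sound.
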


\begin{proof}
Routine (use Proposition~\ref{nb}; see the proofs of Corollary~\ref{eF}(i) and Proposition~\ref{eL}).
\end{proof}

\begin{theorem}[Characterization F]
\label{ne}
Let $\G$ denote a simple strongly connected digraph with vertex set $X$ and diameter $D$. Then, $\G$ is a distance-regular digraph if and only if for every vertex $x\in X$, the partition $\{\G^\la_0(x),\G^\la_1(x),\ldots,\G^\la_D(x)\}$ of $X$ is equitable and its corresponding parameters do not depend on the choice of $x$.
\end{theorem}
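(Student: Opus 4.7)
The plan is to reduce Characterization~F to Theorem~\ref{dJ} (Characterization~A) by passing to the \emph{converse directed graph} $\ol{\G}$, which has the same vertex set $X$ and arcs obtained from those of $\G$ by reversing every arc. Its adjacency matrix is $A^\top$ and its distance-$i$ matrices are $A_i^\top$; in particular $\ol{\G}{}_i^\ra(x)=\G_i^\la(x)$ for every $x\in X$ and $0\le i\le D$, and the roles of $\G_1^\ra$ and $\G_1^\la$ swap when passing between $\G$ and $\ol{\G}$. Under this dictionary, the hypothesis that the left-distance partition of $\G$ around every vertex is equitable with parameters independent of the vertex is exactly the statement that the right-distance partition of $\ol{\G}$ around every vertex is equitable with corresponding parameters independent of the vertex, i.e., that $\ol{\G}$ is distance-regular in the sense of Definition~\ref{dD}.

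For the direction $(\La)$ I would therefore first use the observation above to deduce from the hypothesis that $\ol{\G}$ is distance-regular, and then apply Theorem~\ref{dJ} to $\ol{\G}$ to obtain the commutative $D$-class association scheme $(X,\{\ol{R}_i\}_{i=0}^D)$ whose relation matrices are $\{A_i^\top\}_{i=0}^D$. The next step is to transfer the association-scheme structure back to $(X,\{R_i\}_{i=0}^D)$: AS1 and AS2 are immediate from the definition of the $A_i$'s; AS3 for $\{A_i\}$ is the same statement as AS3 for $\{A_i^\top\}$ (each says that $\{A_j^\top\}_{j=0}^D$ is a permutation of $\{A_j\}_{j=0}^D$); and the identities AS4, AS5 for $\{A_i\}$ follow by transposing the corresponding identities $A_i^\top A_j^\top=\sum_h \ol{p}{}^h_{ij}A_h^\top$ for $\{A_i^\top\}$, which immediately yields $A_jA_i=\sum_h \ol{p}{}^h_{ij}A_h$ and commutativity. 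A second application of Theorem~\ref{dJ} then yields that $\G$ itself is distance-regular.

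For the direction $(\Ra)$ I would use Corollary~\ref{dH} to produce a single permutation $\sigma$ of $\{0,1,\ldots,D\}$, independent of $x$, with $A_i^\top=A_{\sigma(i)}$. This forces $\G_j^\la(x)=\G_{\sigma^{-1}(j)}^\ra(x)$ for every $x\in X$, so the left-distance partition is merely a re-indexing of the right-distance partition by $\sigma^{-1}$. Equitability transfers directly, and for any $y\in\G_i^\la(x)=\G_{\sigma^{-1}(i)}^\ra(x)$ the parameters of the left-distance partition read $|\G_1^\ra(y)\cap\G_j^\la(x)|=d^\ra_{\sigma^{-1}(i),\sigma^{-1}(j)}$ and $|\G_1^\la(y)\cap\G_j^\la(x)|=d^\la_{\sigma^{-1}(i),\sigma^{-1}(j)}$, both of which are independent of $x$ since $\sigma$ and the right-distance parameters are.

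The only delicate point is the axiom-by-axiom transfer of the association-scheme structure from $\{A_i^\top\}$ to $\{A_i\}$ in the $(\La)$ direction. Because the axioms AS1--AS5 are essentially symmetric under simultaneous transposition of all relations (with AS3 being self-dual and AS4 producing intersection numbers that differ only by a swap of lower indices), this amounts to routine bookkeeping rather than a genuine obstacle.
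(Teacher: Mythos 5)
Your proof is correct, but it takes a genuinely different route from the paper's. For the forward implication the paper argues directly with matrices: from equitability of the left-distance partitions it extracts the identities $AA_i=\sum_h r^h_{1i}A_{\ell_h}$ and $A^\top A_i=\sum_h s^h_{1i}A_{\ell_h}$ (the latter giving $A^\top\in\{A_0,\ldots,A_D\}$ because $A_0=I$), feeds these into the left-multiplication machinery of Section~\ref{na} (Propositions~\ref{nb} and \ref{nd}) to conclude that $\A$ is the Bose--Mesner algebra of a commutative $D$-class scheme, and closes with Characterization~B. You instead observe that the hypothesis says precisely that the converse digraph $\ol{\G}$ is distance-regular in the sense of Definition~\ref{dD} (its distance-$i$ matrices being the $A_i^\top$ and its diameter again $D$), apply Characterization~A to $\ol{\G}$, and transport the scheme axioms back through transposition; note that by (AS3) for the scheme attached to $\ol{\G}$ the set $\{A_i^\top\}_{i=0}^D$ coincides with $\{A_i\}_{i=0}^D$, so the two schemes are the same partition of $X\times X$ up to re-indexing and the transfer is even slightly easier than your axiom-by-axiom check suggests. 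Your route buys a shorter argument that reuses Characterization~A wholesale and makes the $\G\leftrightarrow\ol{\G}$ symmetry explicit; the paper's route is self-contained within the left-multiplication framework it develops anyway for Characterization~F$'$. For the converse implication the paper simply cites Proposition~\ref{dF}, whose combinatorial induction (Lemma~\ref{dE}) yields the same re-indexing of the cells that you obtain algebraically from Corollary~\ref{dH}; the only blemish is that with the convention $A_i^\top=A_{\sigma(i)}$ one gets $\G_j^\la(x)=\G_{\sigma(j)}^\ra(x)$ rather than $\G_{\sigma^{-1}(j)}^\ra(x)$, a harmless relabelling that does not affect your conclusion.
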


\begin{proof}
$(\La)$ Assume that for every vertex $x\in X$, the partition $\{\G^\la_0(x),\G^\la_1(x),\ldots,\G^\la_D(x)\}$ of $X$ is equitable and its corresponding parameters do not depend on the choice of $x$. This yields that for any $x,y\in X$ there exist integers $r^j_{1i}$ $(0\le i,j\le D)$ such that 
$$
|\G_i^\la(x)\cap \G_1^\ra(y)|=r^j_{1i},\quad \mbox{for all }y\in\G_j^\la(x)\qquad (0\le i,j\le D).
$$
Since $(AA_i)_{yx}=|\G_i^\la(x)\cap\G^\ra_1(y)|$, we also have
$$
AA_i=\sum_{h=0}^{D} r^h_{1i} A_{\ell_h}.
$$
On the other hand, again, since the partition $\{\G^\la_0(x),\G^\la_1(x),\ldots,\G^\la_D(x)\}$ of $X$ is equitable and its corresponding parameters do not depend on the choice of $x$, for any $x,y\in X$ there exist integers $s^j_{1i}$ $(0\le i,j\le D)$ such that
$$
|\G_i^\la(x)\cap \G_1^\la(y)|=s^j_{1i},\qquad \mbox{for all }y\in\G_j^\la(x).
$$
Since $(A^\top A_i)_{yx}=|\G_i^\la(x)\cap\G^\la_1(y)|$, we also have
$$
A^\top A_i=\sum_{h=0}^{D} s^h_{1i} A_{\ell_h},
$$
which yields $A^\top\in\{A_0,A_1,\ldots,A_D\}$ (as $A_0=I$). Now, from Proposition~\ref{nd} it is not hard to see that $\A$ is the Bose--Mesner algebra of a commutative $D$-class association scheme, and the result follows from Theorem~\ref{eB} (Characterisation~B).

$(\Ra)$ Assume that $\G$ is a distance-regular digraph. The result follows immediately from Proposition~\ref{dF}.
\end{proof}

\begin{theorem}[Characterization F']
\label{nf}
Let $\G$ denote a simple strongly connected digraph with vertex set $X$,  diameter $D$, and let $\{A_0,A:=A_1,A_2,\ldots,A_D\}$ denote distance-$i$ matrices of $\G$. Then, $\G$ is a distance-regular digraph if and only if $A^\top\in\{A_0,A_1,\ldots,A_D\}$ and there exist complex scalars $r^h_{1i}$ $(0\le i,h\le D)$ such that $AA_i=\sum_{h=0}^D r^h_{i1} A_h$ $(0\le i\le D)$.
\end{theorem}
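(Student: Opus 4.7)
The plan is to mirror the argument used for Theorem~\ref{eD} (Characterization~C), replacing right multiplication by $A$ with left multiplication by $A$ throughout, and relying on the ``left-sided'' analogues of the technical lemmas of Section~\ref{Av} that have already been recorded in Propositions~\ref{nb} and~\ref{nd}.

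For the direction $(\Rightarrow)$, assume $\G$ is a distance-regular digraph. Then Corollary~\ref{dH} gives $A^\top\in\{A_0,A_1,\ldots,A_D\}$ for free. Moreover, by Proposition~\ref{dF}, the partition $\{\G_0^\la(x),\ldots,\G_D^\la(x)\}$ coincides (up to reindexing) with $\{\G_0^\ra(x),\ldots,\G_D^\ra(x)\}$ for every $x$, so distance-regularity forces the former to be equitable with parameters independent of $x$. The $(\Leftarrow)$ direction of the already-proved Theorem~\ref{ne} (Characterization~F) shows that such equitability of the $\G_i^\la(x)$ partition is exactly what yields scalars $r^h_{1i}$ with $AA_i=\sum_{h=0}^D r^h_{1i}A_h$, by taking the $yx$-entry $|\G_i^\la(x)\cap\G_1^\ra(y)|$ for $y\in\G_j^\la(x)$. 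This completes the forward implication.

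For the direction $(\Leftarrow)$, suppose $A^\top\in\{A_0,\ldots,A_D\}$ and that there exist scalars $r^h_{1i}$ with $AA_i=\sum_{h=0}^D r^h_{1i}A_h$ $(0\le i\le D)$. Under precisely these hypotheses, Proposition~\ref{nd} delivers three things at once: $\M:=\Span\{A_0,A_1,\ldots,A_D\}$ is closed under ordinary matrix multiplication, $A$ has exactly $D+1$ distinct eigenvalues (so that $\M=\A$), and $A_i^\top\in\{A_0,\ldots,A_D\}$ for all $i$. Combining this with the trivial identities $A_0=I$ and $\sum_{i=0}^D A_i=J$, and with the commutativity $A_iA_j=A_jA_i$ supplied by Proposition~\ref{nb}(iii), we see that all five axioms (AS1)--(AS5) of a commutative $D$-class association scheme are satisfied by $\{A_i\}_{i=0}^D$. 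Therefore $\A$ is the Bose--Mesner algebra of such a scheme, and Theorem~\ref{eB} (Characterization~B) yields that $\G$ is a distance-regular digraph.

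The substantive content has already been packaged in Propositions~\ref{nb} and~\ref{nd}, so no new obstacle remains; the only point that could conceivably cause trouble is ensuring that commutativity and the $P$-polynomial structure survive when we use $AA_i$ in place of $A_iA$, but this is exactly the purpose of Proposition~\ref{nb}, whose proof is a mechanical left-right transposition of Proposition~\ref{Cg}. Thus the theorem reduces to assembling these ingredients and invoking Characterization~B.
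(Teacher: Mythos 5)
Your proof is correct and is exactly the assembly the paper intends: its own proof of Theorem~\ref{nf} is just ``Routine,'' deferring to Propositions~\ref{nb} and~\ref{nd} and the already-established characterizations, which is precisely what you carry out (forward direction via Corollary~\ref{dH} and Proposition~\ref{dF}, backward direction via Proposition~\ref{nd} and Characterization~B).
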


\begin{proof}
Routine.
\end{proof}


\section{Distance-regularity in sense of {\sc Damerell}}
\label{iA}

A digraph $\G$ with diameter $D$ is distance-regular in sense of {\sc Damerell} \cite{Drm} if, for any pair of vertices $x,y\in X$ such that $y\in\G_i^\ra(x)$ $(0\le i\le D)$, the numbers
$$
b_{ij} := |\G_j^{\ra}(x)\cap\G_1^\ra(y)|,
$$ 
for each $j$ ($0\le j\le D$), do not depend on the chosen vertices $x$ and $y$, but only on the distance $i$ from $x$ to $y$. In this section, we show that $\G$ is distance-regular graph if and only if $\G$ is distance-regular in sense of {\sc Damerell}.

\begin{theorem}[Characterization~G]
\label{iB}
Let $\G$ denote a simple strongly connected digraph with vertex set $X$ and diameter $D$. Then $\G$ is a distance-regular digraph if and only if for any pair of vertices $x,y\in X$ such that $y\in\G_h^\ra(x)$ $(0\le h\le D)$, the numbers
$$
s^h_{i1} := |\G_i^{\ra}(x)\cap\G_1^\ra(y)|,
$$ 
for each $i$ (such that $0\le i\le D$), do not depend on the chosen vertices $x$ and $y$, but only on the distance $h$ from $x$ to $y$.
\end{theorem}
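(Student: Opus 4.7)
The plan is to treat the two directions asymmetrically: the forward direction will unfold directly from Definition~\ref{dD}, while for the backward direction I will avoid a direct combinatorial argument by chaining two already-established results in the paper (Damerell's theorem, quoted as Lemma~\ref{PI}, and Characterization~A, Theorem~\ref{dJ}).

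For the $(\Ra)$ direction, assume $\G$ is distance-regular. Fix $x\in X$, $h\in\{0,1,\ldots,D\}$ and $y\in\G_h^{\ra}(x)$. By Definition~\ref{dD} the distance-$i$ partition $\{\G_j^{\ra}(x)\}_{j=0}^D$ is equitable around $x$, and its corresponding parameters are independent of $x$. In particular, $|\G_1^{\ra}(y)\cap\G_i^{\ra}(x)|=d^{\ra}_{hi}$ for every $y\in\G_h^{\ra}(x)$ and every $x\in X$. Setting $s^h_{i1}:=d^{\ra}_{hi}$ yields exactly the claim.

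For the $(\La)$ direction, the hypothesis is, word for word, the definition of distance-regularity in the sense of \textsc{Damerell} recorded in Lemma~\ref{PI} (with $b_{hi}:=s^h_{i1}$). Hence Lemma~\ref{PI} applies and gives that $(X,\{R_i\}_{i=0}^D)$, with $R_i=\{(x,y)\mid (A_i)_{xy}=1\}$, is a commutative $D$-class association scheme. Theorem~\ref{dJ} (Characterization~A) now closes the loop: since the association-scheme condition is equivalent to $\G$ being distance-regular in our sense, we obtain that $\G$ is distance-regular.

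The main conceptual obstacle, and the reason the detour through Characterization~A is useful, is the asymmetry between the hypothesis and Definition~\ref{dD}. The Damerell-type hypothesis directly provides only the ``out'' intersection numbers $d^{\ra}_{ij}$; it says nothing a priori about the ``in'' parameters $d^{\la}_{ij}=|\G_1^{\la}(y)\cap\G_j^{\ra}(x)|$ required for full equitability of the partition $\{\G_j^{\ra}(x)\}_{j=0}^D$. A direct proof would have to recover these by a double-counting/rigidity argument (e.g., fixing $y\in\G_i^{\ra}(x)$ and summing $|\G_1^{\ra}(z)\cap\{y\}|$ over $z\in\G_j^{\ra}(x)$, or iterating the ``forward'' equitability to pick up the converse), which is delicate. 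Using Lemma~\ref{PI} to pass to the association scheme, and then Theorem~\ref{dJ} to come back, packages this rigidity step into results already established in the paper, so no further combinatorics is needed.
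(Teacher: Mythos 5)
Your proof is correct, but it takes a genuinely different route from the paper's. For the $(\Ra)$ direction you read the conclusion directly off Definition~\ref{dD}: since $\{\G_j^{\ra}(x)\}_{j=0}^D$ is equitable with parameters independent of $x$, the quantity $|\G_1^{\ra}(y)\cap\G_i^{\ra}(x)|$ for $y\in\G_h^{\ra}(x)$ is exactly the parameter $d^{\ra}_{hi}$; this is valid and more elementary than the paper, which instead observes the identity $(A_iA^\top)_{xy}=|\G_i^{\ra}(x)\cap\G_1^{\ra}(y)|$ and appeals to Theorem~\ref{eB}. For the $(\La)$ direction you correctly identify the hypothesis as verbatim Damerell's condition (your reindexing $b_{hi}=s^h_{i1}$ is right, since intersection is commutative) and chain the quoted Lemma~\ref{PI} with Theorem~\ref{dJ}; this is logically sound because Lemma~\ref{PI} is stated in the paper as an available external result and its proof does not depend on Theorem~\ref{iB}. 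The paper instead gives a self-contained algebraic argument: it converts the hypothesis into the matrix relations $A_iA^\top=\sum_\ell s^\ell_{i1}A_\ell$, extracts $A^\top\in\{A_0,\ldots,A_D\}$ from the case $i=0$, derives $AA_i=\sum_\ell r^\ell_{1i}A_\ell$, and then invokes Propositions~\ref{nb} and \ref{nd} and Characterization~C$'$ (Theorem~\ref{eQ}). What your route buys is brevity and the avoidance of the delicate intermediate step of re-establishing $A_i^\top\in\{A_0,\ldots,A_D\}$ and the closure relations from scratch (a step the paper handles by citing Corollary~\ref{dH}); what it costs is self-containedness, since the combinatorial rigidity you correctly flag as the crux is outsourced to Damerell's original theorem rather than re-derived from the equitable-partition framework, which is the stated methodological aim of the paper.
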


\begin{proof}
Let $\{A_0,A:=A_1,A_2,\ldots,A_D\}$ denote the distance-$i$ matrices of $\G=\G(A)$.

$(\La)$ Assume that for any pair of vertices $x,y\in X$ such that $y\in\G_h^\ra(x)$ $(0\le h\le D)$, the numbers
$$
s^h_{i1} := |\G_i^{\ra}(x)\cap\G_1^\ra(y)|,
$$ 
for each $i$ (such that $0\le i\le D$; note that $s^h_{i1}=0$ if $i>h+1$), do not depend on the chosen vertices $x$ and $y$, but only on the distance $h$ from $x$ to $y$. For any $y\in\G_h^\ra(x)$ we have
\begin{align}
(A_iA^\top)_{xy} &= \sum_{z\in X} (A_i)_{xz}(A^\top)_{zy}\nonumber\\
&= \sum_{z\in X} (A_i)_{xz}(A)_{yz}\label{Xc}\\
&= |\G_i^{\ra}(x)\cap\G_1^\ra(y)|,\nonumber\\
(\sum_{\ell=0}^D s^\ell_{i1} A_\ell)_{xy} &= s^h_{i1}.\nonumber
\end{align}
Thus, for the complex scalars $s^\ell_{i1}$ $(0\le i,\ell\le D)$, we have
\begin{equation}
\label{Xa}
A_iA^\top=\sum_{\ell=0}^D s^\ell_{i1} A_\ell
\qquad(0\le i\le D).
\end{equation}
Since $A_0=I$, \eqref{Xa} implies $A^\top\in\{A_0,A_1,\ldots,A_D\}$.


By Corollary~\ref{dH} we have $A^\top_i\in\{A_0,A_1,\ldots,A_D\}$ $(0\le i\le D)$. 
Now, for arbitrary $x,y\in X$, consider the $yx$-entry of $AA_i$. For the moment let $A^\top_i=A_{k}$ and $m=\partial(y,x)$. We have
\begin{align*}
(AA_i)_{yx} 
&= \sum_{z\in X} (A)_{yz} (A_i)_{zx}\\
&= \sum_{z\in X} (A)_{yz} (A^\top_i)_{xz}\\
&= \sum_{z\in X} (A_k)_{xz}(A)_{yz}\\
&= |\G_k^{\ra}(x)\cap\G_1^\ra(y)|\\
(\sum_{\ell=0}^D s^\ell_{k1} A_\ell)_{yx} &= s^m_{k1},
\end{align*}
which yields 
\begin{equation}
\label{ma}
AA_i=\sum_{\ell=0}^D s^\ell_{k1} A_\ell \qquad(0\le i\le D,\mbox{ and $k$ is unique index s.t. } A^\top_i=A_k).
\end{equation}
The above line \eqref{ma} can be written as
\begin{equation}
\label{mb}
AA_i=\sum_{\ell=0}^D r^\ell_{1i} A_\ell \qquad(0\le i\le D),
\end{equation}
for some complex scalars $r^\ell_{i1}$ $(0\le i,\le D)$. Using the results of Propositions~\ref{nb} and \ref{nd}, it is routine to show that $\{A_0,A_1,\ldots,A_D\}$ is a basis of $\A$. The result follows form Theorem~\ref{eQ} (Characterization~C').

$(\Ra)$ Note that \eqref{Xc} holds. The result follows from Theorem~\ref{eB} (Characterization~B).
\end{proof}

\begin{theorem}[Characterization~G']
\label{iC}
Let $\G$ denote a simple strongly connected digraph with vertex set $X$ and diameter $D$. Then, $\G$ is a distance-regular digraph if and only if there exist numbers $b_{ij}$ $(0\le i,j\le D)$ such that $|\G^\ra_1(y)\cap\G_j^\ra(x)|=b_{ij}$, for all $x\in X$, $y\in\G^\ra_i(x)$ $(0\le i,j\le D)$.
\end{theorem}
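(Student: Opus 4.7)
The plan is to prove this as an immediate corollary of Theorem~\ref{iB} (Characterization~G), recognizing that Theorem~\ref{iC} is essentially just a relabeling of it. In Characterization~G, the condition is: for every $x\in X$, $y\in\G_h^\ra(x)$, the quantity $s^h_{i1}:=|\G_i^\ra(x)\cap\G_1^\ra(y)|$ depends only on $(h,i)$. In Characterization~G', the condition is: for every $x\in X$, $y\in\G_i^\ra(x)$, the quantity $b_{ij}:=|\G_1^\ra(y)\cap\G_j^\ra(x)|$ depends only on $(i,j)$. After renaming the dummy indices in Characterization~G via $h\mapsto i,\ i\mapsto j$, and using that set intersection is commutative, i.e., $|\G_i^\ra(x)\cap\G_1^\ra(y)|=|\G_1^\ra(y)\cap\G_i^\ra(x)|$, the two conditions are identical as statements about the digraph, with the identification $b_{ij}=s^{i}_{j1}$.

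Concretely, for the $(\Ra)$ direction, I would assume that $\G$ is distance-regular and invoke Theorem~\ref{iB} to produce constants $s^h_{i1}$; then set $b_{ij}:=s^{i}_{j1}$ and verify (by commutativity of intersection) that this yields the Damerell condition of Theorem~\ref{iC}. For the $(\La)$ direction, I would assume the Damerell condition, define $s^h_{i1}:=b_{hi}$, and appeal to Theorem~\ref{iB} to conclude distance-regularity.

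There is essentially no obstacle here: no new combinatorial or algebraic content is required, since all the substantive work has already been done in the proof of Theorem~\ref{iB}. The only point worth making explicit is the observation that the ``Damerell-style'' indexing (grouping vertices $y$ by their distance from $x$ and counting neighbors in distance-classes of $x$) and the ``C'-style'' indexing used in Theorem~\ref{iB} describe literally the same family of intersection counts. This is presumably why the authors wrote Characterizations~G and~G' as separate statements placed back-to-back: Theorem~\ref{iC} is the form that matches Damerell's original definition verbatim, while Theorem~\ref{iB} is the form most convenient for the algebraic reductions used earlier in Section~\ref{iA}.
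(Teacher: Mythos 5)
Your proposal is correct and matches the paper exactly: the paper's proof of Theorem~\ref{iC} is the single line ``Immediate from Theorem~\ref{iB} (Characterization~G),'' and your identification $b_{ij}=s^{i}_{j1}$ via relabeling of indices and commutativity of set intersection is precisely the content of that remark. Nothing further is needed.
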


\begin{proof}
Immediate from Theorem~\ref{iB} (Characterization~G).
\end{proof}

\section{Weakly distance-regularity in sense of {\sc Wang} and {\sc Suzuki}}
\label{kA}

A digraph $\G$ with vertex set $X$ is weakly distance-regular in sense of {\sc Wang} and {\sc Suzuki} \cite{WS} if $(X,\{R_{\ii}\}_{\ii\in\Delta})$ is a $|\Delta|$-class association scheme, where $\Delta=\{(\partial(x,y),\partial(y,x))\mid x,y\in X\}$, and for any $\ii\in\Delta$, $R_{\ii}$ denote the set of ordered pairs $(x,y)\in X\times X$ such that $(\partial(x,y),\partial(y,x))=\ii$. In this section, we show which specific subfamily of weakly distance-regular digraphs in sense of {\sc Wang} and {\sc Suzuki} \cite{WS} is that of distance-regular digraphs.

\begin{theorem}[Characterization~H]
\label{kB}
Let $\G$ denote a simple strongly connected digraph with vertex set $X$ and diameter $D$. Let $\Delta=\{(\partial(x,y),\partial(y,x))\mid x,y\in X\}$, and  let $R_{\ii}$ denote the set of ordered pairs $(x,y)\in X\times X$ such that $(\partial(x,y),\partial(y,x))=\ii$, for any $\ii\in\Delta$. Then, $\G$ is a distance-regular digraph if and only if $(X,\{R_{\ii}\}_{\ii\in\Delta})$ is a commutative $D$-class association scheme.
\end{theorem}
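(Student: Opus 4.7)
My plan is to reduce both implications to Theorem~\ref{dJ} (Characterization~A) by showing that the two-way-distance partition $\{R_{\ii}\}_{\ii\in\Delta}$ coincides, up to a relabeling of indices, with the distance-$i$ partition $\{R_i\}_{i=0}^D$. Granted that identification, the theorem becomes the observation that $(X,\{R_i\}_{i=0}^D)$ being a commutative $D$-class association scheme is precisely what Characterization~A characterizes. So the real content is to verify that $|\Delta|=D+1$ under each hypothesis, and that the two partitions match.

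For the forward direction, I would assume $\G$ is a distance-regular digraph and invoke Corollary~\ref{dH}: every transpose $A_i^\top$ lies in $\{A_0,A_1,\ldots,A_D\}$, so there is a permutation $\sigma$ of $\{0,1,\ldots,D\}$ with $A_i^\top=A_{\sigma(i)}$. Reading off entries, this says that $\partial(x,y)=i$ is equivalent to $\partial(y,x)=\sigma(i)$. Strong connectedness together with diameter $D$ ensures every distance in $\{0,1,\ldots,D\}$ is realised (intermediate vertices of a shortest $D$-path attain all smaller distances), so $\Delta=\{(i,\sigma(i))\mid 0\le i\le D\}$, hence $|\Delta|=D+1$, and $R_{(i,\sigma(i))}=R_i$. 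The partition $\{R_{\ii}\}_{\ii\in\Delta}$ is therefore literally the distance-$i$ partition, and Theorem~\ref{dJ} gives the commutative $D$-class association scheme.

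For the reverse direction, I would assume $(X,\{R_{\ii}\}_{\ii\in\Delta})$ is a commutative $D$-class association scheme, i.e., $|\Delta|=D+1$. Because every distance $0,1,\ldots,D$ is realised, the projection $\pi\colon\Delta\to\{0,1,\ldots,D\}$ sending $(i,j)\mapsto i$ is surjective; combined with $|\Delta|=D+1$, it is a bijection. Consequently, for each $i$ there is a unique $\ii\in\Delta$ with first coordinate $i$, and $R_{\ii}=R_i$. The commutative association scheme structure on $\{R_{\ii}\}_{\ii\in\Delta}$ is therefore exactly that of $\{R_i\}_{i=0}^D$, and Theorem~\ref{dJ} (Characterization~A) gives that $\G$ is distance-regular.

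The main obstacle is the forward direction, specifically the step showing that the back-distance $\partial(y,x)$ is a function of the forward distance $\partial(x,y)$ alone and not of the individual vertices. Corollary~\ref{dH}---itself a consequence of Definition~\ref{dD} and Damerell's girth/diameter result (Theorem~\ref{zk})---packages exactly this fact. Once it is in hand, both implications collapse to counting the classes of $\Delta$ and invoking the already-established Characterization~A.
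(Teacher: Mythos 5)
Your proposal is correct and follows essentially the same route as the paper: the forward direction uses Corollary~\ref{dH} to show $\Delta=\{(i,i^*)\mid 0\le i\le D\}$ and identify $R_{\ii}$ with $R_i$, and the reverse direction uses $|\Delta|=D+1$ together with the fact that every distance $0,\ldots,D$ is realised to make the same identification, after which both implications reduce to Theorem~\ref{dJ} (Characterization~A). The only cosmetic difference is that you spell out the bijectivity of the projection $(i,j)\mapsto i$, which the paper compresses into the parenthetical ``this assumption is valid, since diameter of graph is $D$.''
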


\begin{proof} Let $\{A_0,A:=A_1,A_2,\ldots,A_D\}$ denote the distance-$i$ matrices of $\G=\G(A)$.

$(\La)$ Assume that $(X,\{R_{\ii}\}_{\ii\in\Delta})$ is a commutative $D$-class association scheme $\XXi$. This yields $|\Delta|=D+1$, and assume that $\Delta=\{\ii=(i,i^*)\mid 0\le i\le D\}$ (this assumption is valid, since diameter of graph is $D$). Note that for arbitrary $(z,y),(u,v)\in R_{\ii}$ $(0\le i\le D)$ we have $\partial(z,y)=\partial(u,v)=i$, as well as $\partial(y,z)=\partial(v,u)=i^*$ (note a similarity with Proposition~\ref{Ha}(i)). Let $\{B_0,B_1,\ldots,B_D\}$ denote the standard basis of the Bose--Mesner algebra of $\XXi$, i.e., let $(B_i)_{xy}=1$ $(0\le i\le D)$ if $(x,y)\in R_{\ii}$, and $(B_i)_{xy}=0$ otherwise. Note that then we have $A=B_1$, as well as $A_i=B_i$ $(0\le i\le D)$. The result follows from Theorem~\ref{dJ} (Characterization~A).

$(\Ra)$ Assume that $\G$ is a distance-regular digraph. By Corollary~\ref{dH},  for each $i$ $(0\le i\le D)$ there exists a unique $i^*$ $(0\le i^*\le D)$ such that $A_i^\top=A_{i^*}$. This implies that our set $\Delta$ is in fact $\Delta=\{(i,i^*)\mid 0\le i\le D\}$ and that $R_{\ii}=\{(x,y)\in X\times X\mid (A_i)_{xy}=1\}$ $(\ii\in\Delta)$. The result follows from Theorem~\ref{dJ} (Characterisation~A).
\end{proof}

With reference to Theorem~\ref{kB}, the case when $|\Delta|=d+1$ (where $d+1$ is  the number of distinct eigenvalues of $\G$) we described in \cite[Section~6]{MPc}.


\section{Case when $\boldsymbol{A_D}$ is polynomial in $\boldsymbol{A}$}
\label{oa}

In this section we prove that $\G=\G(A)$ is a distance-regular digraph if and only if $\G$ is a regular graph, has spectrally maximum diameter (i.e., $D=d$), and the matrices $A^\top$ and $A_D$ are polynomials in $A$.

\begin{definition}
\label{of}{\rm
Let $A\in\Mat_{X}(\CC)$ denote a normal matrix with $d+1$ distinct eigenvalues, such that $A\jj=\lambda_0\jj$.
The set of so-called {\em predistance polynomials} $\{p_0,p_1,\ldots,p_d\}$, is a set of orthogonal polynomials with respect to the inner product 
\begin{equation}
\label{oe}
\langle p, q\rangle={1\over |X|}\trace(p(A)\ol{q(A)}^\top),
\end{equation}
(defined on a ring of all polynomials $\mathbb{C}_d[t]=\{a_0+a_1t+\ldots+a_dt^d \mid a_i\in\mathbb{C},\,0\le i\le d\}$ of degree at most $d$ with coefficients in $\mathbb{C}$) such that $\deg(p_i)=i$ $(0\le i\le d)$ and normalized in a way that $\|p_i\|^2=p_i(\lambda_0)$ where $p(\lambda_0)>0$. (More about predistance polynomials reader can find in \cite[Section~4]{MoP}.)
}\end{definition}

\begin{lemma}
\label{oi}
With reference to {\rm Definition~\ref{of}}, let $\Gamma$ denote a simple connected regular (di)graph with adjacency matrix $A$, vertex set $X$, valency $k$ and assume that $A$ is a normal matrix with $d+1$ distinct eigenvalues. If $\{p_0,p_1,\ldots,p_d\}$ denote the set of the predistance polynomials, then
$$
\sum_{i=0}^d p_i(A) = J.
$$
\end{lemma}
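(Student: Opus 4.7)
The plan is to compute the coefficients obtained when the Hoffman-type polynomial is expanded in the basis of predistance polynomials, and to verify that every such coefficient equals $1$.

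First, I would invoke Theorem~\ref{Ra}. Since $\G$ is strongly connected and $k$-regular, $A\jj=A^\top\jj=k\jj$, so there exists a polynomial $h(t)\in\CC[t]$ with $J=h(A)$. Because $A$ is normal with $d+1$ distinct eigenvalues, the set $\{A^0,A^1,\ldots,A^d\}$ is a basis of the adjacency algebra $\A$, so without loss of generality $\deg h\le d$. Consequently $h$ may be expanded in the basis $\{p_0,p_1,\ldots,p_d\}$ of $\CC_d[t]$:
$$
h(t)=\sum_{i=0}^d \alpha_i\,p_i(t),
\qquad
\alpha_i=\frac{\langle h,p_i\rangle}{\langle p_i,p_i\rangle}
=\frac{\langle h,p_i\rangle}{p_i(\lambda_0)},
$$
by the orthogonality of the $p_i$ and the normalization $\|p_i\|^2=p_i(\lambda_0)$.

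Next I would evaluate $\langle h,p_i\rangle$ using $h(A)=J$. By definition of the inner product \eqref{oe},
$$
\langle h,p_i\rangle
=\frac{1}{|X|}\trace\bigl(J\,\ol{p_i(A)}^\top\bigr).
$$
Writing out the trace, $\trace(JM)$ equals the sum of all entries of $M$ for any $M\in\MX$, so $\langle h,p_i\rangle$ is $|X|^{-1}$ times the sum of all entries of $\ol{p_i(A)}^\top$, which equals $|X|^{-1}\,\ol{\jj^\top p_i(A)\jj}$. Since $\Gamma$ is $k$-regular, $A\jj=k\jj=\lambda_0\jj$, and thus $p_i(A)\jj=p_i(\lambda_0)\jj$. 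This gives
$$
\jj^\top p_i(A)\jj=p_i(\lambda_0)\,\jj^\top\jj=|X|\,p_i(\lambda_0),
$$
and (since $p_i(\lambda_0)\in\RR_{>0}$ by the normalization convention) we conclude $\langle h,p_i\rangle=p_i(\lambda_0)$.

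Therefore $\alpha_i=p_i(\lambda_0)/p_i(\lambda_0)=1$ for every $i$ $(0\le i\le d)$, so $h(t)=\sum_{i=0}^d p_i(t)$ and hence $\sum_{i=0}^d p_i(A)=h(A)=J$, as desired. The main subtlety in this argument is making sure the normalization $\|p_i\|^2=p_i(\lambda_0)$ from Definition~\ref{of} is paired correctly with the inner-product evaluation of $\langle h,p_i\rangle$; once the regularity is used to reduce $\jj^\top p_i(A)\jj$ to $|X|\,p_i(\lambda_0)$, the coefficients collapse to $1$ with no further computation.
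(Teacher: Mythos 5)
Your proof is correct. The paper itself does not give an argument here --- it simply defers to an external reference (\cite[Lemma~4.5]{MoP}) --- whereas you supply the standard self-contained derivation: use Theorem~\ref{Ra} to produce the Hoffman-type polynomial $h$ with $h(A)=J$ and $\deg h\le d$, expand $h$ in the orthogonal basis $\{p_0,\ldots,p_d\}$ of $\CC_d[t]$, and show each Fourier coefficient equals $1$ by reducing $\langle h,p_i\rangle=\frac{1}{|X|}\trace\bigl(J\,\ol{p_i(A)}^\top\bigr)$ to $\ol{p_i(\lambda_0)}=p_i(\lambda_0)$ via $A\jj=\lambda_0\jj$. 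All the steps check out: the trace identity $\trace(JM)=\jj^\top M\jj$, the eigenvector computation $p_i(A)\jj=p_i(\lambda_0)\jj$, and the normalization $\|p_i\|^2=p_i(\lambda_0)$ combine exactly as you say, and you correctly place $p_i$ in the conjugate-linear slot of the Hermitian inner product (harmless here since $p_i(\lambda_0)$ is real and positive by the normalization convention). The only implicit points worth being aware of are that the form \eqref{oe} is genuinely positive definite on $\CC_d[t]$ (because the minimal polynomial of the normal matrix $A$ has degree $d+1$, so $p(A)=\O$ forces $p=0$ for $\deg p\le d$), which justifies both the orthogonal expansion and the reduction of $h$ to degree at most $d$; neither affects the validity of your argument.
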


\begin{proof}
Immediate from \cite[Lemma~4.5]{MoP}.
\end{proof}

\begin{proposition}
\label{ol}
With reference to {\rm Definition~\ref{of}}, let $\Gamma$ denote a simple connected $k$-regular (di)graph with adjacency matrix $A$.
Assume that that $A$ is a normal matrix with $d+1$ distinct eigenvalues and that $\G$ has spectrally maximum diameter (i.e. $D=d$). Let $\{p_0,p_1,\ldots,p_D\}$ denote the set of the predistance polynomials, and $A_D$ the distance-$D$ matrix. If there exists a polynomial $q(t)\in\RR_{D}[t]$ such that $A_D=q(A)$, then 
$$
q(t)=p_D(t).
$$
\end{proposition}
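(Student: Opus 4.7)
The plan is to show that $q$ coincides with $p_D$ by working in the inner product space $(\mathbb{C}_D[t], \langle\cdot,\cdot\rangle)$ from \eqref{oe}, where the predistance polynomials form an orthogonal basis. Since $\{p_0,p_1,\ldots,p_D\}$ has strictly increasing degrees, it spans $\mathbb{C}_D[t]$, so I can write
$$
q(t)=\sum_{i=0}^{D}\alpha_i\,p_i(t),
\qquad
\alpha_i=\frac{\langle q,p_i\rangle}{\langle p_i,p_i\rangle}.
$$
The goal is to prove $\alpha_i=0$ for $i<D$ and $\alpha_D=1$.

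The first (and main) step is an orthogonality computation. Since $A$ is a $01$-matrix, $\overline{A^j}^\top=(A^\top)^j$, and for any polynomial $p_i$ of degree $i<D$, writing $p_i(A)=\sum_{j=0}^{i}\gamma_j A^j$, I get
$$
\langle q,p_i\rangle=\frac{1}{|X|}\trace\!\bigl(A_D\,\overline{p_i(A)}^\top\bigr)=\frac{1}{|X|}\sum_{j=0}^{i}\overline{\gamma_j}\,\trace\!\bigl(A_D (A^\top)^j\bigr).
$$
The key observation is that
$$
\trace\!\bigl(A_D(A^\top)^j\bigr)=\sum_{x\in X}\;\sum_{y\in\Gamma_D^{\ra}(x)}(A^j)_{xy},
$$
and by Lemma~\ref{hB} the inner sum counts walks of length $j$ from $x$ to vertices at distance exactly $D$; for $j<D$ no such walks exist, so each trace vanishes. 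Therefore $\langle q,p_i\rangle=0$ for all $i<D$, which forces $q=\alpha_D\,p_D$.

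The second step pins down $\alpha_D$. Because $A_D=q(A)$ is a polynomial in the $k$-regular normal matrix $A$ and $A\jj=k\jj=\lambda_0\jj$, the all-ones vector is an eigenvector of $A_D$: $A_D\jj=q(\lambda_0)\jj$. Reading this entrywise gives $|\Gamma_D^{\ra}(x)|=q(\lambda_0)=:k_D$ for every $x\in X$, so in particular $k_D\geq 1$ since $D$ is the diameter. Using the $01$-matrix identity $A_D\circ A_D=A_D$ I compute
$$
\|q\|^2=\frac{1}{|X|}\trace\!\bigl(A_D A_D^\top\bigr)=\frac{1}{|X|}\sum_{x\in X}|\Gamma_D^{\ra}(x)|=k_D=q(\lambda_0).
$$
On the other hand, using $q=\alpha_D p_D$ and the normalization $\|p_D\|^2=p_D(\lambda_0)>0$, I obtain $\|q\|^2=|\alpha_D|^2 p_D(\lambda_0)$ and $q(\lambda_0)=\alpha_D p_D(\lambda_0)$. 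Equating these two expressions yields $|\alpha_D|^2=\alpha_D$, so $\alpha_D\in\{0,1\}$; and $\alpha_D=0$ is excluded because $A_D\ne\boldsymbol{O}$. Hence $\alpha_D=1$ and $q=p_D$.

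The step I expect to be the real crux is the orthogonality argument via the trace identity $\trace(A_D(A^\top)^j)=0$ for $j<D$: this is the place where the hypothesis $D=d$ (spectrally maximum diameter) and the defining property of $A_D$ interact with the walk-counting interpretation of powers of $A$ to eliminate all lower-degree components of $q$ in the predistance basis. Once that is in hand, the remaining work is a bookkeeping computation with the normalization $\|p_D\|^2=p_D(\lambda_0)$.
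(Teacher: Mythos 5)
Your proof is correct. The paper itself does not prove this proposition directly: its ``proof'' is a one-line citation of an external result (Proposition~4.6 of the reference \cite{MoP}), so you have supplied a complete self-contained argument where the paper defers to the literature. Your route --- expand $q$ in the orthogonal basis $\{p_i\}$, kill the coefficients $\alpha_i$ for $i<D$ via $\trace(A_D(A^\top)^j)=0$ for $j<D$ (no walks shorter than the distance), and then pin down $\alpha_D=1$ by comparing $\|q\|^2=q(\lambda_0)$ with the normalization $\|p_D\|^2=p_D(\lambda_0)>0$ --- is essentially the standard projection argument, and it is the same mechanism the paper itself uses later in Lemma~\ref{qb} when it computes $\proj_{\A}(A_D)$, so your proof fits naturally into the paper's toolkit. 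All the individual steps check out: the Fourier expansion is legitimate because the form \eqref{oe} is positive definite on $\CC_D[t]$ (a polynomial of degree at most $D=d$ cannot annihilate $A$, whose minimal polynomial has degree $d+1$); the identities $A_D\jj=q(\lambda_0)\jj$ and $\|q\|^2=\frac{1}{|X|}\sum_x|\G_D^{\ra}(x)|=q(\lambda_0)$ are right; and $|\alpha_D|^2=\alpha_D$ together with $A_D\ne\O$ forces $\alpha_D=1$. One small inaccuracy in your closing commentary: the vanishing of $\trace(A_D(A^\top)^j)$ for $j<D$ does not actually use $D=d$; that hypothesis enters earlier, to guarantee that the predistance family reaches degree $D$ and that the inner product is nondegenerate on polynomials of degree at most $D$. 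This does not affect the validity of the proof.
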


\begin{proof}
Immediate from \cite[Proposition~4.6]{MoP}.
\end{proof}

\begin{proposition}
\label{ok}
With reference to {\rm Definition~\ref{of}}, let $\Gamma$ denote a simple connected $k$-regular (di)graph with adjacency matrix $A$, adjacency algebra $\A$ and assume that $A$ is a normal matrix with $d+1$ distinct eigenvalues. Assume that $\G$ has spectrally maximum diameter (i.e. $D=d$). Let $\{p_0,p_1,\ldots,p_D\}$ denote the set of the predistance polynomials, and $A_D$ the distance-$D$ matrix. If $A^\top,A_D\in \A$, then 
$$
A_i=p_i(A)\qquad(0\le i\le D).
$$
\end{proposition}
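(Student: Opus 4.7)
The plan is to prove $p_i(A) = A_i$ for every $0 \le i \le D$ by downward induction on $i$, starting from $i = D$ and decreasing. There are two anchor points for the induction: the case $i = D$ is exactly the content of Proposition~\ref{ol} combined with the hypothesis $A_D \in \A$, while the case $i = 0$ is immediate since the normalization $\|p_0\|^2 = p_0(\lambda_0)$ forces $p_0 \equiv 1$, hence $p_0(A) = I = A_0$.

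The central elementary observation, used throughout, is that for any polynomial $q$ of degree $\le i$, $(q(A))_{xy} = 0$ whenever $\partial(x,y) > i$. Indeed, $(A^k)_{xy}$ is the number of walks of length $k$ from $x$ to $y$ (Lemma~\ref{hB}), which vanishes for $k < \partial(x,y)$. Applied to $p_i$, this yields $(p_i(A))_{xy} = 0$ for every $(x,y)$ with $\partial(x,y) > i$.

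For the inductive step, assume $p_j(A) = A_j$ for every $j > i$. Combining Lemma~\ref{oi} with the induction hypothesis,
$$
\sum_{j=0}^{i} p_j(A) \;=\; J - \sum_{j=i+1}^{D} p_j(A) \;=\; J - \sum_{j=i+1}^{D} A_j \;=\; \sum_{j=0}^{i} A_j.
$$
Evaluating this matrix identity entrywise: for $\partial(x,y) > i$ both sides vanish by the central observation; for $\partial(x,y) = i$, the central observation kills every $(p_j(A))_{xy}$ with $j < i$ on the left and every $(A_j)_{xy}$ with $j < i$ on the right, yielding $(p_i(A))_{xy} = 1 = (A_i)_{xy}$.

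The main obstacle is the case $\partial(x,y) < i$, where $(A_i)_{xy} = 0$ and one must establish $(p_i(A))_{xy} = 0$. Here the hypothesis $A^\top \in \A$ enters crucially: together with normality (cf.\ Lemma~\ref{on}), it implies that $\A$ is closed under conjugate transposition, so $\ol{p_j(A)}^{\top} \in \A$ for every $j$. Exploiting the orthogonality
$$
\langle p_i, p_j \rangle \;=\; \frac{1}{|X|}\trace\!\bigl(p_i(A)\,\ol{p_j(A)}^{\top}\bigr) \;=\; \delta_{ij}\,p_i(\lambda_0),
$$
and substituting the inductive identifications $p_j(A) = A_j$ for $j > i$, one obtains a system of linear constraints on the remaining unknown entries $(p_i(A))_{xy}$ with $\partial(x,y) < i$. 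Combined with the matrix identity above, this system forces those entries to vanish, completing the induction. The argument adapts the proof technique of Fiol's spectral excess theorem for undirected graphs (cf.\ \cite{FaD}) to the directed setting, making essential use of the closure of $\A$ under transposition.
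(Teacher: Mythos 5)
There is a genuine gap, and it sits exactly at the step you yourself flag as ``the main obstacle.'' First, for context: the paper does not prove this proposition internally --- its proof is a one-line citation of \cite[Proposition~5.3]{MoP} together with Lemma~\ref{on} --- so the whole substance of the statement is the step you leave unproved. Your scaffolding up to that point is sound: normality and $d+1$ distinct eigenvalues give $\A=\Span\{I,A,\ldots,A^D\}$, so $A_D\in\A$ plus Proposition~\ref{ol} yields the anchor $A_D=p_D(A)$; and for $\partial(x,y)\ge i$ the identity $\sum_{j=0}^{i}p_j(A)=\sum_{j=0}^{i}A_j$ combined with the degree bound does give $(p_i(A))_{xy}=(A_i)_{xy}$. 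But for $\partial(x,y)=l<i$ that identity only yields $\sum_{j=l}^{i}(p_j(A))_{xy}=1$ with \emph{all} of $(p_l(A))_{xy},\ldots,(p_i(A))_{xy}$ unknown, and the orthogonality relations cannot break the tie. Each pairing $\langle p_i(A),M\rangle=\frac{1}{|X|}\sum_{x,y}(p_i(A))_{xy}\,\ol{M_{xy}}$ is a single scalar, i.e.\ a sum of entries; pairing $p_i(A)$ against the matrices you actually know to lie in $\A$ (the $A_j=p_j(A)$ with $j>i$, their transposes, and $J$) produces only identities such as $\frac{1}{|X|}\sum_{x,y}(p_i(A))_{xy}=p_i(\lambda_0)$, which are automatically satisfied by regularity and the degree bound and constrain no individual entry. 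To even isolate the class sums $\sum_{\partial(x,y)=l}(p_i(A))_{xy}$ for $l<i$ you would need $\langle p_i(A),A_l\rangle$, i.e.\ $A_l\in\A$ --- which is what you are trying to prove. So ``this system forces those entries to vanish'' is an assertion, not an argument.

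The reason this cannot be patched by ``adapting Fiol's technique'' routinely is that both standard mechanisms from the undirected proof break in the directed setting. The downward induction there is closed by the three-term recurrence $tp_{i+1}=\beta_ip_i+\alpha_{i+1}p_{i+1}+\gamma_{i+2}p_{i+2}$, which exists because multiplication by $t$ is self-adjoint for the inner product \eqref{oe}; for a normal nonsymmetric $A$ one has $\langle tp,q\rangle=\frac{1}{|X|}\sum_j m(\lambda_j)\lambda_j p(\lambda_j)\ol{q(\lambda_j)}\ne\langle p,tq\rangle$ in general, so the predistance polynomials of a digraph need not satisfy any three-term recurrence. Likewise the support argument $(AA_{i+1})_{xy}=0$ for $\partial(x,y)<i$ uses $\partial(z,y)\le\partial(z,x)+\partial(x,y)$ together with $\partial(z,x)=\partial(x,z)$, which fails for directed distance. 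The case $\partial(x,y)<i$ is therefore precisely where the directed result is genuinely harder than the undirected one, and it is exactly the content of the external Proposition~5.3 of \cite{MoP} that the paper leans on; your proposal assumes it rather than proving it.
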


\begin{proof}
Immediate from Lemma~\ref{on} and \cite[Proposition~5.3]{MoP}.
\end{proof}

Our main result of this section, Theorem~\ref{om}, is just ``directed'' version of the following result from theory of distance-regular (undirected) graphs.

\begin{proposition}[{\cite[Proposition~2]{Fsp} or \cite{FGJ}}]
An undirected regular graph $\G$ with diameter $D$ and $d + 1$ distinct eigenvalues is a distance-regular if and only if $D = d$ and the distance-$D$ matrix $A_D$ is a polynomial in $A$.
\end{proposition}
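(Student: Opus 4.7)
The plan is to prove both directions, with the forward direction being short and the converse forming the main content. For the forward direction, I would note that if $\G$ is distance-regular then every distance matrix $A_i$ is a polynomial of degree exactly $i$ in $A$, so $\{A_0, A_1, \ldots, A_D\}$ is a linearly independent set of $D+1$ elements inside the adjacency algebra $\A$, which has dimension $d+1$. Combined with the fact that $\A$ is in fact spanned by these distance matrices (they form the standard basis of the Bose--Mesner algebra, which coincides with $\A$ by the analogue of Theorem~\ref{eB} for undirected graphs), this forces $D=d$; in particular $A_D$ is a polynomial in $A$.

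For the converse---the substantive direction---I would exploit the predistance-polynomial framework already introduced in Section~\ref{oa}. Since $\G$ is undirected, $A$ is symmetric (hence normal) and $A^\top = A \in \A$ holds trivially. The hypothesis that $A_D$ is a polynomial in $A$ together with $D=d$ allows me to apply Proposition~\ref{ol}, which identifies this polynomial as the predistance polynomial of degree $D$; that is, $A_D = p_D(A)$. Then Proposition~\ref{ok} immediately propagates this to all indices: $A_i = p_i(A)$ for every $0 \le i \le D$, and each $p_i$ has degree exactly $i$.

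Finally I would conclude distance-regularity using (the undirected specialization of) Characterization~D, i.e.\ Theorem~\ref{fB}: the condition $A^\top \in \{A_0,\ldots,A_D\}$ is satisfied since $A^\top = A = A_1$, and we have just produced polynomials $p_i$ with $\deg p_i \le i$ satisfying $A_i = p_i(A)$. An equivalent direct route is to compute $A A_i = A \cdot p_i(A) \in \Span\{A_0,\ldots,A_{i+1}\}$, write this combination as $\sum_{h=0}^{i+1} c_h A_h$, and observe that for any $y \in \G_j(x)$ the $(x,y)$-entry of $A A_i$ equals $|\G_1(x)\cap \G_i(y)|$ and simultaneously equals $c_j$, recovering Damerell-style intersection numbers and hence distance-regularity via Characterization~G'.

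The main obstacle is entirely localized in the invocation of Proposition~\ref{ol}: the spectral argument identifying the given polynomial representation of $A_D$ with the predistance polynomial $p_D$ relies on the orthogonality relations in the inner product \eqref{oe}, and this is precisely where the assumption $D=d$ (spectrally maximum diameter) is essential---without it one could not rule out a lower-degree representation of $A_D$ that would break the identification and hence the induction implicit in Proposition~\ref{ok}. Everything else cascades routinely from results already proved in the paper.
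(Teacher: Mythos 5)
Your argument is correct and takes essentially the same route as the paper: the paper cites this proposition from the literature without proof, but its proof of the directed generalization, Theorem~\ref{om} (Characterization~I), runs exactly as you propose --- normality of $A$, then Propositions~\ref{ol} and~\ref{ok} to obtain $A_i=p_i(A)$ for all $i$, then one of the polynomial characterizations (the paper closes with Characterization~C where you use Characterization~D; the difference is immaterial). Indeed your statement is just the undirected special case of Theorem~\ref{om}, since for an undirected graph $A^\top=A$ is trivially a polynomial in $A$.
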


\begin{theorem}[Characterization~I]
\label{om}
Let $\G$ denote a simple strongly connected digraph with vertex set $X$, adjacency matrix $A$, diameter $D$, and let $A_D$ denote the distance-$D$ matrix. Then, $\G=\G(A)$ is a distance-regular digraph if and only if $\G$ is regular, has spectrally maximum diameter (i.e., $D=d$), and the matrices $A^\top$ and $A_D$ are polynomials in $A$.
\end{theorem}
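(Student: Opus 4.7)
The plan is to handle both directions by reducing to earlier results of the paper. The forward direction is essentially routine: if $\G$ is distance-regular, then $D = d$ by Corollary~\ref{oD}, every $A_i$ is a polynomial in $A$ by Proposition~\ref{jE}(i) (so in particular $A_D$ is), and $A^\top$ equals some $A_h$ by Corollary~\ref{dH} and is therefore also a polynomial in $A$; regularity then follows by observing that $J=\sum_{i=0}^D A_i$ is a polynomial in $A$ and invoking Theorem~\ref{Ra}.

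For the reverse direction, the plan is to use Proposition~\ref{ok} as the main engine. First, from $A^\top = p(A)$ I would note that $A$ is normal (as $A$ is real and commutes with any polynomial in itself). Then, having $\G$ regular, $A$ normal with $d+1$ distinct eigenvalues, $D = d$, and $A^\top, A_D \in \A$, Proposition~\ref{ok} immediately yields $A_i = p_i(A)$ for $0 \le i \le D$, where $p_i$ is the $i$-th predistance polynomial. Since the distance-$i$ matrices are linearly independent and $\dim\A = D+1$, we obtain $\A = \Span\{A_0,A_1,\ldots,A_D\}$. In particular both $A_j A$ and $A_j A^\top$ lie in this span, and so admit unique expansions $A_j A = \sum_h p^h_{j1} A_h$ and $A_j A^\top = \sum_h q^h_{j1} A_h$ for suitable complex scalars.

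To finish, I would read off the $(x,y)$-entry of each expansion for an arbitrary pair with $\partial(x,y)=i$. This gives $|\G_j^\ra(x)\cap \G_1^\la(y)| = p^i_{j1}$ and $|\G_j^\ra(x)\cap \G_1^\ra(y)| = q^i_{j1}$, so both cardinalities depend only on $i$ and $j$, not on the specific choice of $x$ or of $y\in\G_i^\ra(x)$. Setting $d^\la_{ij} := p^i_{j1}$ and $d^\ra_{ij} := q^i_{j1}$ produces exactly the parameters demanded by Definition~\ref{dD}, establishing distance-regularity. The main (mild) obstacle is to verify the hypotheses of Proposition~\ref{ok} cleanly; a pleasant feature of this route is that it bypasses the need to first prove $A^\top \in \{A_0,\ldots,A_D\}$ — that conclusion then follows a posteriori from Corollary~\ref{dH}.
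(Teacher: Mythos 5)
Your proposal is correct, and the core engine is the same as the paper's: both directions rest on Lemma~\ref{on} (normality of $A$ from $A^\top=p(A)$) together with Propositions~\ref{ol} and~\ref{ok}, which identify each $A_i$ with the predistance polynomial $p_i(A)$ and hence show that $\{A_0,\ldots,A_D\}$ is a basis of $\A$. Where you diverge is in the finishing step. The paper, having reached this point, simply invokes Theorem~\ref{eD} (Characterization~C); but that characterization requires the set membership $A^\top\in\{A_0,\ldots,A_D\}$, which is strictly stronger than the available fact $A^\top\in\A=\Span\{A_0,\ldots,A_D\}$ (a priori $A^\top$ could be a $01$-sum of several distance matrices), and the paper does not address this. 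You instead bypass Characterization~C entirely: you expand $A_jA=\sum_h p^h_{j1}A_h$ and $A_jA^\top=\sum_h q^h_{j1}A_h$ in the basis $\{A_i\}$, read off the $(x,y)$-entries to get $|\G_j^\ra(x)\cap\G_1^\la(y)|=p^i_{j1}$ and $|\G_j^\ra(x)\cap\G_1^\ra(y)|=q^i_{j1}$ whenever $\partial(x,y)=i$, and match these directly with the parameters $d^\la_{ij}$, $d^\ra_{ij}$ of Definition~\ref{dD}. This is a genuine (if modest) improvement: it verifies the combinatorial definition directly and needs only $A^\top\in\A$, so the issue above never arises; as you note, $A^\top\in\{A_0,\ldots,A_D\}$ then follows a posteriori from Corollary~\ref{dH}. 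The only point you leave tacit is that each cell $\G_i^\ra(x)$ is nonempty (so that the distance partition really has $D+1$ cells for every $x$); this is immediate since $A_i\jj=p_i(k)\jj$ with $p_i(k)=\|p_i\|^2>0$, and is worth a sentence.
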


\begin{proof}
$(\La)$ Assume that $\G$ is a regular graph, has spectrally maximum diameter $(D=d)$, and that $A^\top$ and $A_D$ are polynomials in $A$. By Lemma~\ref{on}, $A$ is a normal matrix, and by Propositions~\ref{ol} and \ref{ok}, $\{A_0,A_1=A,\ldots,A_D\}\subseteq\A$. Moreover, since $\dim(\A)=D+1$, the set $\{A_0,A_1=A,\ldots,A_D\}$ is a basis of $\A$. The result follows from Theorem~\ref{eD} (Characterization~C).

$(\Ra)$ Assume that $\G=\G(A)$ is a distance-regular digraph. The result follows immediately from Theorem~\ref{eD} (Characterization~C) and Proposition~\ref{Cg}(iii).
\end{proof}


\section{Spectral excess theorem for distance-regular digraphs}
\label{qa}

In this section, we prove spectral excess theorem for distance-regular digraphs. A version of such a theorem can be also found in \cite[Theorem~4.6]{Ogr}

\begin{lemma}
\label{qb}
With reference to {\rm Definition~\ref{of}}, let $\G$ denote a $k$-regular digraph with $d + 1$ distinct eigenvalues, spectrally maximum diameter $D = d$, adjacency algebra $\A$, and predistance polynomials $\{p_0,p_1,\ldots,p_D\}$. Then, $\frac{1}{|X|}\sum_{x\in X} |\G_D^{\ra}(x)|\le p_D(k)$, and equality holds if and only if $A_D=p_D(A)$.
\end{lemma}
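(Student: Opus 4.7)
The plan is to combine the Cauchy--Schwarz inequality with the decomposition $\sum_{i=0}^D p_i(A)=J$ from Lemma~\ref{oi}. Extend the inner product of Definition~\ref{of} from $\A$ to the whole matrix algebra $\Mat_X(\CC)$ via $\langle M,N\rangle:=\tfrac{1}{|X|}\trace(M\,\ol{N}^\top)$; the two definitions agree on polynomials in $A$. Under this extension I would first verify three identities: (a) $\|A_D\|^2=\tfrac{1}{|X|}\sum_{x\in X}|\G_D^{\ra}(x)|$, by computing $(A_DA_D^\top)_{xx}=|\G_D^{\ra}(x)|$; (b) $\|p_D(A)\|^2=p_D(k)$, which is the very normalization in Definition~\ref{of}; and (c) $\langle A_D,p_D(A)\rangle=\tfrac{1}{|X|}\sum_{x\in X}|\G_D^{\ra}(x)|$.

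The crucial step is (c). Since $A$ is real, $\ol{A^j}^\top=(A^\top)^j$, and a routine index swap gives $\trace(A_D(A^\top)^j)=\sum_{x,y}(A_D)_{xy}(A^j)_{xy}$. Whenever $(A_D)_{xy}=1$ one has $\partial(x,y)=D$, and Lemma~\ref{hB} then forces $(A^j)_{xy}=0$ for every $j<D$. Hence $\langle A_D,A^j\rangle=0$ for all $j<D$, and therefore $\langle A_D,p_i(A)\rangle=0$ for every $i<D$ since $\deg p_i\le i$. Combining this vanishing with $J=\sum_{i=0}^D p_i(A)$ and with the direct evaluation $\langle A_D,J\rangle=\tfrac{1}{|X|}\sum_x|\G_D^{\ra}(x)|$ isolates the $i=D$ contribution and yields (c).

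Plugging (a), (b), (c) into the Cauchy--Schwarz inequality gives
$$
\left(\frac{1}{|X|}\sum_{x\in X}|\G_D^{\ra}(x)|\right)^{\!2}=|\langle A_D,p_D(A)\rangle|^2\le\|A_D\|^2\,\|p_D(A)\|^2=\frac{p_D(k)}{|X|}\sum_{x\in X}|\G_D^{\ra}(x)|,
$$
and since $D$ is the diameter the left-hand average is strictly positive, so dividing produces the claimed bound. Equality in Cauchy--Schwarz forces $A_D=c\,p_D(A)$ for some scalar $c$; taking inner product with $p_D(A)$ and invoking (b) and (c) gives $c\,p_D(k)=p_D(k)$, hence $c=1$ and $A_D=p_D(A)$, while the converse is immediate from (a) and (b). The main obstacle is the identity $\langle A_D,A^j\rangle=0$ for $j<D$ in the directed setting: the transpose inside the inner product has to be handled carefully, but after the index swap it reduces to the walk-counting fact that there are no walks of length less than $D$ between two vertices at directed distance exactly $D$.
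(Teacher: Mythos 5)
Your proof is correct and follows essentially the same route as the paper: both arguments rest on the orthogonality $\langle A_D,p_i(A)\rangle=0$ for $i<D$ (no short walks between vertices at distance $D$) combined with $\sum_{i=0}^{D}p_i(A)=J$ to get $\langle A_D,p_D(A)\rangle=\|A_D\|^2=\tfrac{1}{|X|}\sum_{x}|\G_D^{\ra}(x)|$. The paper phrases the final step as an orthogonal projection onto $\A$ plus the Pythagorean theorem, while you phrase it as Cauchy--Schwarz applied to $A_D$ and $p_D(A)$; these are the same estimate, and your handling of the equality case is also sound.
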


\begin{proof} Our proof is within the same lines as the proof of \cite[Lemma~1]{Fsp}, where the authors deal with undirected graph.

Let $\{A_i\}_{i=0}^D$ denote distance-$i$ matrices of $\G$. Consider the vector space $\T=\A+\D$, where $\A=\Span\{A^0,A^1,\ldots,A^d\}$ and $\D=\Span\{A_0,A_1,\ldots,A_D\}$, together with the inner product \eqref{oe}. Note that $\A+\D$ above is the ordinary sum of the vector spaces, not the direct sum. Consider the orthogonal projection $\T\ra\A$, onto the vector space $\A$, denoted by $S\mapsto\proj_{\A}(S)$, and note that $\{p_i(A)\}_{i=0}^D$ is an orthogonal basis for $\A$ (with respect to the inner product \eqref{oe}). Since $\sum_{i=0}^D A_i =J$, $\sum_{i=0}^D p_i(A)=J$ (see Lemma~\ref{oi}), $p_i$ is of degree $i$ $(0\le i\le D)$ and $(A^i)_{xy}=0$ if $\partial(x,y)>i$, the orthogonal projection of $A_D$ $(A_D\ne \O)$ is 
\begin{align}
\proj_{\A}(A_D) &= \sum_{j=0}^D \frac{\langle A_D, p_j(A)\rangle}{\|p_j\|^2} p_j(A)=\frac{\langle A_D, p_D(A)\rangle}{\|p_D\|^2} p_D(A)\nonumber\\
&=\frac{\langle A_D, J\rangle}{\|p_D\|^2} p_D(A)
=\frac{\|A_D\|^2}{p_D(k)} p_D(A)\label{qc}.
\end{align}
Recall that $p_D(k)=\|p_D\|^2>0$ and note
$$
\|\proj_{\A}(A_D)\|^2=\left\langle\frac{\|A_D\|^2}{p_D(k)} p_D(A),\frac{\|A_D\|^2}{p_D(k)} p_D(A)\right\rangle= \frac{\|A_D\|^4}{p_D(k)}.
$$
Now consider the equality $A_D=\proj_{\A}(A_D)+N$, where $N\in\A^\bot$. The Pythagorean theorem and \eqref{qc} yield
$$
0\le \|N\|^2 = \|A_D\|^2 - \|\proj_{\A}(A_D)\|^2=\|A_D\|^2\left(1- \frac{\|A_D\|^2}{{p_D(k)}}\right).
$$
Now we have $\|A_D\|^2=\frac{1}{|X|}\trace(A_D \ol{A}^\top_D)=
\frac{1}{|X|}\sum_{x\in X}\sum_{z\in X} (A_D\circ A_D)_{xz}=\frac{1}{|X|}\sum_{x\in X}|\G^{\ra}_D(x)|$, and the inequality follows. Moreover, the equality holds if and only if $N=\O$, i.e., $A_D=p_D(A)$.
\end{proof}

\begin{lemma}
\label{qd}
Let $A$ denote a normal matrix with spectrum $\spec(A)=\{[\lambda_0]^{m(\lambda_0)},\ldots,[\lambda_d]^{m(\lambda_d)}\}$. With reference to the inner product \eqref{oe},
$$
\langle p,q\rangle=\frac{1}{|X|} \sum_{j=0}^d m(\lambda_j) p(\lambda_j) \ol{q(\lambda_j)}.
$$
\end{lemma}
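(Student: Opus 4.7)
The plan is to exploit the spectral decomposition of the normal matrix $A$ that was already established in the proof of Lemma~\ref{on}. Since $A$ is normal, I can write
$$
A=\sum_{j=0}^d \lambda_j E_j, \qquad E_j=U_j\ol{U_j}^\top,
$$
where the columns of $U_j$ form an orthonormal basis of $\ker(A-\lambda_j I)$. The key properties I will use are the ones already verified in the excerpt: $\ol{E_j}^\top=E_j$, $E_iE_j=\delta_{ij}E_j$, and $\trace(E_j)=m(\lambda_j)$.

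First, by functional calculus for diagonalizable matrices, for every polynomial $p\in\CC[t]$ one has $p(A)=\sum_{j=0}^d p(\lambda_j)E_j$. Since $\ol{E_j}^\top=E_j$ and conjugation-transpose reverses products and conjugates scalars, it follows immediately that
$$
\ol{q(A)}^\top=\sum_{j=0}^d \ol{q(\lambda_j)}\, E_j.
$$
Then, multiplying the two expressions and using the orthogonality relations $E_iE_j=\delta_{ij}E_j$, I obtain
$$
p(A)\ol{q(A)}^\top=\sum_{j=0}^d p(\lambda_j)\ol{q(\lambda_j)}\, E_j.
$$

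Finally, taking the trace term-by-term and using $\trace(E_j)=m(\lambda_j)$ (which holds because $\trace(U_j\ol{U_j}^\top)=\trace(\ol{U_j}^\top U_j)=\trace(I_{m(\lambda_j)})=m(\lambda_j)$), I conclude
$$
\langle p,q\rangle=\frac{1}{|X|}\trace\!\left(p(A)\ol{q(A)}^\top\right)=\frac{1}{|X|}\sum_{j=0}^d m(\lambda_j)\,p(\lambda_j)\,\ol{q(\lambda_j)},
$$
as required. There is no substantive obstacle here; the whole argument is a direct manipulation of the spectral decomposition, and every ingredient (orthogonality of the $E_j$'s, self-adjointness $\ol{E_j}^\top=E_j$, and the trace formula) has already been recorded in the excerpt, so the proof reduces to a short computation.
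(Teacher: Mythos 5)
Your proposal is correct and follows essentially the same route as the paper: both invoke the spectral decomposition $A=\sum_j\lambda_jE_j$ with $E_j=U_j\ol{U_j}^\top$ from the proof of Lemma~\ref{on}, use $\ol{E_j}^\top=E_j$, $E_iE_j=\delta_{ij}E_i$ and $\trace(E_j)=m(\lambda_j)$, and evaluate $\trace\bigl(p(A)\ol{q(A)}^\top\bigr)$ term by term. No discrepancies to report.
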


\begin{proof}
For each eigenvalue $\lambda_i$ $(0\le i\le d)$ let $U_i$ denote the matrix whose columns form an orthonormal basis for the eigenspace $\ker(A-\lambda_i I)$, and note that $\dim(\ker(A-\lambda_i I))=m(\lambda_i)$. Abbreviate $m_i=m(\lambda_i)$ $(0\le i\le d)$. In the proof of Lemma~\ref{on} we show that 
$$
A=\lambda_0E_0+\lambda_1E_1+\cdots+\lambda_d E_d\qquad(\mbox{where }E_i:=U_i\ol{U_i}^{\top}).
$$
Since $E_iE_j=\delta_{ij}E_i$, $\ol{E_i}^\top=E_i$ and $\trace(E_i)=m_i$ $(0\le i\le d)$, for any polynomials $p,q\in\CC_d[t]$ we have
$$
p(A) = p(\lambda_0) E_0 + \cdots + p(\lambda_d) E_d,\qquad
q(A) = q(\lambda_0) E_0 + \cdots + q(\lambda_d) E_d
$$
and
\begin{align*}
\langle p,q\rangle &= \frac{1}{X} \trace(p(A)\ol{q(A)}^\top)\\
&= \frac{1}{X} \trace\big(p(\lambda_0)\ol{q(\lambda_0)}E_0+\cdots+p(\lambda_d)\ol{q(\lambda_d)}E_d\big)\\
&= \frac{1}{|X|} \sum_{j=0}^d m_j p(\lambda_j) \ol{q(\lambda_j)}.
\end{align*}
The result follows.
\end{proof}

\begin{lemma}
\label{qe}
With reference to {\rm Definition~\ref{of}}, let $A$ denote a normal matrix with spectrum $\spec(A)=\{[\lambda_0]^{m(\lambda_0)},\ldots,[\lambda_d]^{m(\lambda_d)}\}$ such that $A\jj=\lambda_0\jj$, and let $\{p_0,p_1,\ldots,p_d\}$ denote a set of predistance polynomials. Set $\pi_i=\prod_{j=0(j\not=i)}^d(\lambda_i-\lambda_j)$ $(0\le i\le d)$. Then 
$$
p_d(\lambda_0)=
\frac{|X|}{|\pi_0|^2(m(\lambda_0))^2}\left(\sum_{j=0}^d \frac{1}{m(\lambda_j)|\pi_j|^2}\right)^{-1}
$$
\end{lemma}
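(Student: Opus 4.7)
The plan is to characterize $p_d$ up to a scalar by its orthogonality property, and then pin down the scalar via the normalization $\|p_d\|^2=p_d(\lambda_0)$.

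First I would invoke Lemma~\ref{qd} to rewrite the inner product in point-evaluation form:
$$
\langle p,q\rangle=\frac{1}{|X|}\sum_{j=0}^d m(\lambda_j)\,p(\lambda_j)\,\ol{q(\lambda_j)}.
$$
Since $\deg p_d=d$ and $p_d$ is orthogonal to $p_0,p_1,\ldots,p_{d-1}$, it is orthogonal to every polynomial of degree $<d$. So the vector $\uu\in\CC^{d+1}$ with entries $u_j=m(\lambda_j)p_d(\lambda_j)$ is Hermitian-orthogonal to the subspace $V=\{(q(\lambda_0),\ldots,q(\lambda_d))\mid q\in\CC_{d-1}[t]\}$.

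Next I would identify $V$ explicitly. With $\pi(t)=\prod_{j=0}^{d}(t-\lambda_j)$ (so $\pi'(\lambda_j)=\pi_j$), the partial-fraction expansion $q(t)/\pi(t)=\sum_j q(\lambda_j)/(\pi_j(t-\lambda_j))$ combined with the fact that $q(t)/\pi(t)=O(1/t^2)$ as $t\to\infty$ whenever $\deg q\le d-1$ forces the coefficient of $1/t$ on the right to vanish, i.e.
$$
\sum_{j=0}^d \frac{q(\lambda_j)}{\pi_j}=0\qquad(\forall q\in\CC_{d-1}[t]).
$$
Since $V$ is $d$-dimensional and lies in this hyperplane, $V=\{v\in\CC^{d+1}\mid \sum_j v_j/\pi_j=0\}$. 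Its Hermitian-orthogonal complement is therefore one-dimensional and spanned by $(1/\ol{\pi_j})_{j=0}^d$. Hence there exists a scalar $c\in\CC$ with
$$
p_d(\lambda_j)=\frac{c}{m(\lambda_j)\,\ol{\pi_j}}\qquad(0\le j\le d).
$$

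Finally I would impose the normalization. A direct computation using Lemma~\ref{qd} gives
$$
\|p_d\|^2=\frac{1}{|X|}\sum_{j=0}^d m(\lambda_j)\,|p_d(\lambda_j)|^2=\frac{|c|^2}{|X|}\sum_{j=0}^d \frac{1}{m(\lambda_j)|\pi_j|^2},
$$
while $p_d(\lambda_0)=c/(m(\lambda_0)\ol{\pi_0})$. Since $p_d(\lambda_0)>0$ is real, $p_d(\lambda_0)^2=|p_d(\lambda_0)|^2=|c|^2/(m(\lambda_0)^2|\pi_0|^2)$. Setting $\|p_d\|^2=p_d(\lambda_0)$, substituting this last expression for $|c|^2$, and cancelling one factor of $p_d(\lambda_0)$ from both sides yields
$$
p_d(\lambda_0)=\frac{|X|}{|\pi_0|^2\,m(\lambda_0)^2}\left(\sum_{j=0}^d\frac{1}{m(\lambda_j)|\pi_j|^2}\right)^{-1},
$$
as required. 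The only delicate step is the identification of $V$ via the partial-fraction/Vandermonde identity; everything else is bookkeeping with the Hermitian inner product and the normalization convention from Definition~\ref{of}.
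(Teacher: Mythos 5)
Your proposal is correct and follows the same overall strategy as the paper's proof: both pass to the discrete inner product of Lemma~\ref{qd}, use orthogonality of $p_d$ to all polynomials of degree at most $d-1$ to deduce that $m(\lambda_j)\,p_d(\lambda_j)\,\ol{\pi_j}$ is independent of $j$, and then conclude via the normalization $\|p_d\|^2=p_d(\lambda_0)$. The only point of divergence is how that constancy is obtained: the paper tests orthogonality against the explicit degree-$(d-1)$ polynomials $Z_i(t)=\prod_{j=1,\,j\ne i}^d(t-\lambda_j)$, each nonzero only at $\lambda_0$ and $\lambda_i$, whereas you identify the evaluation image of $\CC_{d-1}[t]$ as the hyperplane $\sum_j v_j/\pi_j=0$ via partial fractions and take its one-dimensional Hermitian-orthogonal complement --- an equivalent, slightly more structural route to the same relation.
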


\begin{proof}
Our proof use similar technique that can be fond in \cite{CFfg}. Define  polynomials $Z_i(t)$ $(1\le i\le d)$ of degree $d-1$ in the following way
$$
Z_i(t)=\prod\limits_{\stackrel{j=1}{j\not=i}}^d (t-\lambda_j).
$$
Note that
$$
Z_i(\lambda_\ell)=\left\{
\begin{array}{ll}
\frac{\pi_0}{\lambda_0-\lambda_i}, & \mbox{if } \ell=0,\\
0, & \mbox{if } \ell\ne 0 \mbox{ or }\ell\ne i,\\
\frac{\pi_i}{\lambda_i-\lambda_0}, & \mbox{if } \ell=i
\end{array}
\right.\qquad (1\le i\le d),
$$
which implies
\begin{align*}
0 &= \langle p_d,Z_i \rangle\\
&= \frac{1}{|X|} \sum_{\ell=0}^d m(\lambda_\ell) p_d(\lambda_\ell) \ol{Z_i(\lambda_\ell)}\qquad(\mbox{see Lemma~\ref{qd}})\\
&= \frac{1}{|X|}\left(
m(\lambda_0) p_d(\lambda_0) \ol{Z_i(\lambda_0)}+
m(\lambda_i) p_d(\lambda_i) \ol{Z_i(\lambda_i)}\right)\\
&= \frac{1}{|X|}\left(
m(\lambda_0) p_d(\lambda_0) \frac{\ol{\pi_0}}{\ol{\lambda_0}-\ol{\lambda_i}}+
m(\lambda_i) p_d(\lambda_i) \frac{\ol{\pi_i}}{\ol{\lambda_i}-\ol{\lambda_0}}\right).
\end{align*}
The last line from above yields
\begin{equation}
\label{qf}
m(\lambda_i) p_d(\lambda_i) \ol{\pi_i} = m(\lambda_0) p_d(\lambda_0) \ol{\pi_0}
\qquad (1\le i\le d).
\end{equation}
Taking conjugate of \eqref{qf}, we also have $m(\lambda_i) \ol{p_d(\lambda_i)} {\pi_i} = m(\lambda_0) \ol{p_d(\lambda_0)} {\pi_0}$. 
In the end, by \eqref{qf}, we have
\begin{align*}
p_d(\lambda_0) &= \underbrace{\|p_d\|^2}_{>0} =\langle p_d, p_d\rangle\\
&= \frac{1}{|X|} \sum_{j=0}^d m(\lambda_j) p_d(\lambda_j) \ol{p_d(\lambda_j)}
\qquad(\mbox{see Lemma~\ref{qd}})\\
&= \frac{1}{|X|} m(\lambda_0) p_d(\lambda_0) \underbrace{\ol{p_d(\lambda_0)}}_{=p_d(\lambda_0)}
+\frac{1}{|X|} \sum_{j=1}^d m(\lambda_j) p_d(\lambda_j) {p_d(\lambda_j)}\\
&= \frac{1}{|X|} m(\lambda_0) p_d(\lambda_0) {p_d(\lambda_0)}
+\frac{1}{|X|} \sum_{j=1}^d m(\lambda_j) 
\frac{m(\lambda_0) p_d(\lambda_0) \ol{\pi_0}}{m(\lambda_j)\ol{\pi_j}} 
\frac{m(\lambda_0) \ol{p_d(\lambda_0)} {\pi_0}}{m(\lambda_j){\pi_j}}\\
&= \frac{(p_d(\lambda_0))^2}{|X|} m(\lambda_0)
+\frac{(p_d(\lambda_0))^2}{|X|} |\pi_0|^2 (m(\lambda_0))^2 
\sum_{j=1}^d \frac{1}{m(\lambda_j)|\pi_j|^2}\\
&=\frac{(p_d(\lambda_0))^2}{|X|} |\pi_0|^2 (m(\lambda_0))^2
\sum_{j=0}^d \frac{1}{m(\lambda_j)|\pi_j|^2}.
\end{align*}
The result follows.
\end{proof}

\begin{theorem}[Characterization~J]
\label{qg}
Let $\G$ denote a simple strongly connected digraph with vertex set $X$, adjacency matrix $A$, diameter $D$ and spectrum $\spec(\G)=\{[\lambda_0]^{m(\lambda_0)},\ldots,[\lambda_d]^{m(\lambda_d)}\}$. Then, $\G=\G(A)$ is a distance-regular digraph if and only if $\G$ is regular, has spectrally maximum diameter (i.e., $D=d$), $A^\top$ is a polynomial in $A$, and the following equality holds
\begin{equation}
\label{qk}
\frac{1}{|X|} \sum_{x\in X} \G^{\ra}_D(x)=|X|\left(\sum_{j=0}^d \frac{|\pi_0|^2}{m(\lambda_j)|\pi_j|^2}\right)^{-1},
\end{equation}
where $\pi_i=\prod_{j=0\,(j\not=i)}^d(\lambda_i-\lambda_j)$ $(0\le i\le d)$.
\end{theorem}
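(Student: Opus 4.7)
The plan is to reduce both directions to Characterization~I (Theorem~\ref{om}) by showing that, under the regularity, normality and spectral hypotheses, the equality \eqref{qk} is exactly the condition that $A_D$ is a polynomial in $A$. Throughout, note that since $\G$ is simple, strongly connected and regular, $A$ has the Perron eigenvalue $\lambda_0=k$ with $m(\lambda_0)=1$ (Theorem~\ref{gc}), and the assumption $A^\top\in\langle A\rangle$ makes $A$ a normal matrix by Lemma~\ref{on}; in particular the predistance polynomials $\{p_0,\ldots,p_d\}$ of Definition~\ref{of} are well defined, and the hypothesis $D=d$ puts us in the regime of Lemmas~\ref{qb} and \ref{qe}.

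The first step is to simplify the right-hand side of \eqref{qk}. Using $m(\lambda_0)=1$, Lemma~\ref{qe} yields
\begin{equation*}
p_D(\lambda_0) \;=\; \frac{|X|}{|\pi_0|^2}\left(\sum_{j=0}^d \frac{1}{m(\lambda_j)|\pi_j|^2}\right)^{-1} \;=\; |X|\left(\sum_{j=0}^d \frac{|\pi_0|^2}{m(\lambda_j)|\pi_j|^2}\right)^{-1}.
\end{equation*}
Hence \eqref{qk} is equivalent to the single scalar identity
\begin{equation*}
\frac{1}{|X|}\sum_{x\in X}|\G^{\ra}_D(x)| \;=\; p_D(\lambda_0) \;=\; p_D(k).
\end{equation*}
By Lemma~\ref{qb}, this is precisely the equality case of the inequality $\frac{1}{|X|}\sum_{x\in X}|\G^{\ra}_D(x)|\le p_D(k)$, which in turn is equivalent to $A_D=p_D(A)$; in particular, it is equivalent to $A_D\in\A$.

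For the forward direction ($\Ra$), suppose $\G$ is a distance-regular digraph. By Characterization~I (Theorem~\ref{om}), $\G$ is regular with $D=d$, and both $A^\top$ and $A_D$ are polynomials in $A$. Propositions~\ref{ol}--\ref{ok} (applicable since $A$ is normal by Lemma~\ref{on}) then force $A_D=p_D(A)$, so Lemma~\ref{qb} gives equality $\frac{1}{|X|}\sum_{x\in X}|\G_D^{\ra}(x)|=p_D(k)$, which by the rewriting above is exactly \eqref{qk}.

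For the converse ($\La$), assume $\G$ is regular with $D=d$, that $A^\top$ is a polynomial in $A$, and that \eqref{qk} holds. The scalar rewriting shows that \eqref{qk} says $\frac{1}{|X|}\sum_{x\in X}|\G^{\ra}_D(x)|=p_D(k)$, and the equality case of Lemma~\ref{qb} then gives $A_D=p_D(A)\in\A$. Thus $\G$ is a regular digraph with spectrally maximum diameter, and both $A^\top$ and $A_D$ lie in $\A$; Characterization~I (Theorem~\ref{om}) now yields that $\G$ is distance-regular.

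The only delicate point is the algebraic identification of $p_D(k)$ with the right-hand side of \eqref{qk}; this is where Lemma~\ref{qe} does the heavy lifting, and where we must invoke $m(\lambda_0)=1$ (a consequence of strong connectedness via Perron--Frobenius). Once that evaluation is in place, the theorem is a clean combination of the extremal characterization in Lemma~\ref{qb} with Characterization~I, and no new computations are needed.
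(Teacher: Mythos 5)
Your proof is correct and follows essentially the same route as the paper: invoke $m(\lambda_0)=1$ from strong connectedness, use Lemma~\ref{qe} to identify the right-hand side of \eqref{qk} with $p_D(k)$, apply the equality case of Lemma~\ref{qb} to get $A_D=p_D(A)$, and conclude via Characterization~I. The only difference is that you spell out the forward direction (which the paper dismisses as routine) using Propositions~\ref{ol} and \ref{ok}, which is a welcome addition.
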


\begin{proof}
$(\La)$ Assume that $\G$ is $k$-regular (i.e., $\lambda_0=k$), has spectrally maximum diameter $(D=d)$, $A^\top$ is polynomial in $A$, and that \eqref{qk} holds. Since $\G$ is simple strongly connected digraph, $m(\lambda_0)=1$ (see Subsection~\ref{eb}). By Lemmas~\ref{qb} and \ref{qe}, \eqref{qk} yields $p_D(A)=A_D$. The result follows from Theorem~\ref{om} (Characterization~I).

$(\Ra)$ Routine.
\end{proof}


\section{Further directions}
\label{jD}

\medskip
In the near future, we would like to weaken the assumption of Theorem~\ref{om} and to offer a soultion to  Research problem~\ref{nx}. It is going for showing that the condition  $A^\top\in\{A_0,A_1=A,\ldots,A_D\}$ together with $A_D$ is polynomial in $A$ is equivalent with the property that both $A^\top$ and $A_D$ are polynomial in $A$ (see Theorem~\ref{om} (Characterization~I)).

\begin{researchProblem}
\label{nx}
Let $\G$ denote a simple strongly connected digraph with vertex set $X$, diameter $D$ and let $A_i$ $(0\le i\le D)$ denote the distance-$i$ matrices. Prove or disprove the following claim: $\G$ is a distance-regular digraph if and only if $\G$ is regular, has spectrally maximum diameter $(D=d)$, $A^\top\in\{A_0,A_1=A,\ldots,A_D\}$ and distance-$D$ matrix $A_D$ is a polynomial in $A$.
\end{researchProblem}

\section*{Acknowledgments}

This work is supported in part by the Slovenian Research Agency (research program P1-0285 and research projects J1-3001 and N1-0353).

\section*{Declaration of competing interest}

The authors declare that they have no known competing financial interests or personal relationships that could have appeared to influence the work reported in this paper.


{\small
\bibliographystyle{references}
\bibliography{directedDRG}

\begin{thebibliography}{10}
\expandafter\ifx\csname urlstyle\endcsname\relax
  \providecommand{\doi}[1]{doi:\discretionary{}{}{}#1}\else
  \providecommand{\doi}{doi:\discretionary{}{}{}\begingroup
  \urlstyle{rm}\Url}\fi

\bibitem{AOST}
S.~Ashkboos, G.~Omidi, F.~Shafiei and K.~Tajbakhsh, Minimum cuts of
  distance-regular digraphs, \emph{Electron. J. Combin.} \textbf{24} (2017),
  Paper No. 4.2, 10, \doi{10.37236/6167}, \url{https://doi.org/10.37236/6167}.

\bibitem{AS}
S.~Axler, \emph{Linear algebra done right}, Undergraduate Texts in Mathematics,
  Springer, Cham, 3rd edition, 2015, \doi{10.1007/978-3-319-11080-6},
  \url{https://doi.org/10.1007/978-3-319-11080-6}.

\bibitem{LADr}
S.~Axler, \emph{Linear algebra done right}, Undergraduate Texts in Mathematics,
  Springer, Cham, 3rd edition, 2015, \doi{10.1007/978-3-319-11080-6},
  \url{https://doi.org/10.1007/978-3-319-11080-6}.

\bibitem{BCK}
E.~Bannai, P.~J. Cameron and J.~Kahn, Nonexistence of certain
  distance-transitive digraphs, \emph{J. Combin. Theory Ser. B} \textbf{31}
  (1981), 105--110, \doi{10.1016/S0095-8956(81)80015-9},
  \url{https://doi.org/10.1016/S0095-8956(81)80015-9}.

\bibitem{BCN}
A.~E. Brouwer, A.~M. Cohen and A.~Neumaier, \emph{Distance-regular graphs},
  volume~18 of \emph{Ergebnisse der Mathematik und ihrer Grenzgebiete (3)
  [Results in Mathematics and Related Areas (3)]}, Springer-Verlag, Berlin,
  1989, \doi{10.1007/978-3-642-74341-2}.

\bibitem{BVM}
A.~E. Brouwer and H.~Van~Maldeghem, \emph{Strongly regular graphs}, volume 182
  of \emph{Encyclopedia of Mathematics and its Applications}, Cambridge
  University Press, Cambridge, 2022, \doi{10.1017/9781009057226},
  \url{https://doi.org/10.1017/9781009057226}.

\bibitem{CFfg}
M.~C\'{a}mara, J.~F\`abrega, M.~A. Fiol and E.~Garriga, Some families of
  orthogonal polynomials of a discrete variable and their applications to
  graphs and codes, \emph{Electron. J. Combin.} \textbf{16} (2009), Research
  Paper 83, 30, \doi{10.37236/172}, \url{https://doi.org/10.37236/172}.

\bibitem{CT}
D.~Combe and D.~E. Taylor, Two results concerning distance-regular directed
  graphs, \emph{Australas. J. Combin.} \textbf{23} (2001), 27--36.

\bibitem{CFGM}
F.~Comellas, M.~A. Fiol, J.~Gimbert and M.~Mitjana, Weakly distance-regular
  digraphs, \emph{J. Combin. Theory Ser. B} \textbf{90} (2004), 233--255,
  \doi{10.1016/j.jctb.2003.07.003},
  \url{https://doi.org/10.1016/j.jctb.2003.07.003}.

\bibitem{CfFgM}
F.~Comellas, M.~A. Fiol, J.~Gimbert and M.~Mitjana, On the weak
  distance-regularity of {M}oore-type digraphs, \emph{Linear Multilinear
  Algebra} \textbf{54} (2006), 265--284, \doi{10.1080/03081080500423825},
  \url{https://doi.org/10.1080/03081080500423825}.

\bibitem{Drm}
R.~M. Damerell, Distance-transitive and distance-regular digraphs, \emph{J.
  Combin. Theory Ser. B} \textbf{31} (1981), 46--53,
  \doi{10.1016/S0095-8956(81)80009-3},
  \url{https://doi.org/10.1016/S0095-8956(81)80009-3}.

\bibitem{EM}
H.~Enomoto and R.~A. Mena, Distance-regular digraphs of girth {$4$}, \emph{J.
  Combin. Theory Ser. B} \textbf{43} (1987), 293--302,
  \doi{10.1016/0095-8956(87)90005-0},
  \url{https://doi.org/10.1016/0095-8956(87)90005-0}.

\bibitem{FW}
Y.~Fan, Z.~Wang and Y.~Yang, Weakly distance-regular digraphs of one type of
  arcs, \emph{Graphs Combin.} \textbf{38} (2022), Paper No. 89, 29,
  \doi{10.1007/s00373-022-02493-z},
  \url{https://doi.org/10.1007/s00373-022-02493-z}.

\bibitem{FZL}
Y.~Feng, L.~Zeng and S.~Liu, Two families of directed strongly regular graphs
  and the full automorphism groups, \emph{Ars Combin.} \textbf{142} (2019),
  273--282.

\bibitem{FaD}
M.~A. Fiol, Algebraic characterizations of distance-regular graphs, volume 246,
  pp. 111--129, 2002, \doi{10.1016/S0012-365X(01)00255-2}, formal power series
  and algebraic combinatorics (Barcelona, 1999),
  \url{https://doi.org/10.1016/S0012-365X(01)00255-2}.

\bibitem{FQpG}
M.~A. Fiol, Quotient-polynomial graphs, \emph{Linear Algebra Appl.}
  \textbf{488} (2016), 363--376, \doi{10.1016/j.laa.2015.09.053},
  \url{https://doi.org/10.1016/j.laa.2015.09.053}.

\bibitem{Fsp}
M.~A. Fiol, S.~Gago and E.~Garriga, A simple proof of the spectral excess
  theorem for distance-regular graphs, \emph{Linear Algebra Appl.} \textbf{432}
  (2010), 2418--2422, \doi{10.1016/j.laa.2009.07.030},
  \url{https://doi.org/10.1016/j.laa.2009.07.030}.

\bibitem{FGJ}
M.~A. Fiol, E.~Garriga and J.~L.~A. Yebra, Locally pseudo-distance-regular
  graphs, \emph{J. Combin. Theory Ser. B} \textbf{68} (1996), 179--205,
  \doi{10.1006/jctb.1996.0063}, \url{https://doi.org/10.1006/jctb.1996.0063}.

\bibitem{FaMm}
M.~A. Fiol and M.~Mitjana, The spectra of some families of digraphs,
  \emph{Linear Algebra Appl.} \textbf{423} (2007), 109--118,
  \doi{10.1016/j.laa.2006.11.018},
  \url{https://doi.org/10.1016/j.laa.2006.11.018}.

\bibitem{FMPS}
M.~A. Fiol and S.~Penji{\'{c}}, On symmetric association schemes and associated
  quotient-polynomial graphs, \emph{Algebr. Comb.} \textbf{4} (2021), 947--969,
  \doi{10.5802/alco}, \url{https://doi.org/10.5802/alco}.

\bibitem{GK}
Z.~Gengsheng and W.~Kaishun, A directed version of {D}eza graphs---{D}eza
  digraphs, \emph{Australas. J. Combin.} \textbf{28} (2003), 239--244.

\bibitem{GRac}
C.~D. Godsil, \emph{Algebraic combinatorics}, Chapman and Hall Mathematics
  Series, Chapman \& Hall, New York, 1993.

\bibitem{GCf}
R.~W. Goldbach and H.~L. Claasen, Feasibility conditions for non-symmetric
  {$3$}-class association schemes, \emph{Discrete Math.} \textbf{159} (1996),
  111--118, \doi{10.1016/0012-365X(96)00103-3},
  \url{https://doi.org/10.1016/0012-365X(96)00103-3}.

\bibitem{GCb}
R.~W. Goldbach and H.~L. Claasen, {$3$}-class association schemes and
  {H}adamard matrices of a certain block form, \emph{European J. Combin.}
  \textbf{19} (1998), 943--951, \doi{10.1006/eujc.1998.0251},
  \url{https://doi.org/10.1006/eujc.1998.0251}.

\bibitem{GC}
R.~W. Goldbach and H.~L. Claasen, The {$3$}-class association schemes connected
  with checkered {H}adamard matrices of order {$16$}, \emph{European J.
  Combin.} \textbf{21} (2000), 887--898, \doi{10.1006/eujc.2000.0404},
  \url{https://doi.org/10.1006/eujc.2000.0404}.

\bibitem{GzJdZ}
Z.~Guo, D.~Jia and G.~Zhang, Some constructions of quasi-strongly regular
  digraphs, \emph{Graphs Combin.} \textbf{38} (2022), Paper No. 15, 16,
  \doi{10.1007/s00373-021-02441-3},
  \url{https://doi.org/10.1007/s00373-021-02441-3}.

\bibitem{HMc}
A.~J. Hoffman and M.~H. McAndrew, The polynomial of a directed graph,
  \emph{Proc. Amer. Math. Soc.} \textbf{16} (1965), 303--309,
  \doi{10.2307/2033868}, \url{https://doi.org/10.2307/2033868}.

\bibitem{HJ}
R.~A. Horn and C.~R. Johnson, \emph{Matrix analysis}, Cambridge University
  Press, Cambridge, 2nd edition, 2013.

\bibitem{IM}
T.~Ikuta and A.~Munemasa, Complex {H}adamard matrices attached to a 3-class
  nonsymmetric association scheme, \emph{Graphs Combin.} \textbf{35} (2019),
  1293--1304, \doi{10.1007/s00373-019-02044-z},
  \url{https://doi.org/10.1007/s00373-019-02044-z}.

\bibitem{Jl}
L.~K. J{\o}rgensen, Algorithmic approach to non-symmetric 3-class association
  schemes, in: \emph{Algorithmic algebraic combinatorics and {G}r\"{o}bner
  bases}, Springer, Berlin, pp. 251--268, 2009,
  \doi{10.1007/978-3-642-01960-9\_8},
  \url{https://doi.org/10.1007/978-3-642-01960-9_8}.

\bibitem{Lam}
C.~W.~H. Lam, Distance transitive digraphs, \emph{Discrete Math.} \textbf{29}
  (1980), 265--274, \doi{10.1016/0012-365X(80)90155-7},
  \url{https://doi.org/10.1016/0012-365X(80)90155-7}.

\bibitem{LpTm}
P.~Lancaster and M.~Tismenetsky, \emph{The theory of matrices}, Computer
  Science and Applied Mathematics, Academic Press, Inc., Orlando, FL, 2nd
  edition, 1985.

\bibitem{Lda}
D.~A. Leonard, Directed distance regular graphs with the {$Q$}-polynomial
  property, \emph{J. Combin. Theory Ser. B} \textbf{48} (1990), 191--196,
  \doi{10.1016/0095-8956(90)90117-I},
  \url{https://doi.org/10.1016/0095-8956(90)90117-I}.

\bibitem{LN}
D.~A. Leonard and K.~Nomura, The girth of a directed distance-regular graph,
  \emph{J. Combin. Theory Ser. B} \textbf{58} (1993), 34--39,
  \doi{10.1006/jctb.1993.1028}, \url{https://doi.org/10.1006/jctb.1993.1028}.

\bibitem{LGG}
Z.~Li, S.~Gao and H.~Guo, Commutative weakly distance-regular digraphs of
  circle with fixed length, \emph{Ars Combin.} \textbf{101} (2011), 301--307.

\bibitem{LM}
R.~A. Liebler and R.~A. Mena, Certain distance-regular digraphs and related
  rings of characteristic {$4$}, \emph{J. Combin. Theory Ser. A} \textbf{47}
  (1988), 111--123, \doi{10.1016/0097-3165(88)90045-3},
  \url{https://doi.org/10.1016/0097-3165(88)90045-3}.

\bibitem{LL}
W.~Liu and J.~Lin, Walk regular digraphs, \emph{Ars Combin.} \textbf{95}
  (2010), 97--102.

\bibitem{LS}
D.~S. Lyubshin and S.~V. Savchenko, Cayley digraphs with normal adjacency
  matrices, \emph{Discrete Math.} \textbf{309} (2009), 4343--4348,
  \doi{10.1016/j.disc.2007.03.086},
  \url{https://doi.org/10.1016/j.disc.2007.03.086}.

\bibitem{mR}
D.~Margalit, J.~Rabinoff and B.~Williams, \emph{Interactive Linear Algebra: UBC
  edition}, University of British Columbia, 2023,
  \url{https://personal.math.ubc.ca/~tbjw/ila/index2.html}.

\bibitem{MCm}
C.~Meyer, \emph{Matrix analysis and applied linear algebra}, Society for
  Industrial and Applied Mathematics (SIAM), Philadelphia, PA, 2000,
  \doi{10.1137/1.9780898719512}, with 1 CD-ROM (Windows, Macintosh and UNIX)
  and a solutions manual (iv+171 pp.),
  \url{https://doi.org/10.1137/1.9780898719512}.

\bibitem{MPc}
G.~Monzillo and S.~Penji{\' c}, On commutative association schemes and
  associated (directed) graphs, 2023., \url{https://arxiv.org/abs/2307.11680}.

\bibitem{MoP}
G.~Monzillo and S.~Penji{\' c}, On the {H}offman polynomials of
  $\lambda$-doubly stochastic matrices and commutative association schemes,
  2024., \url{https://arxiv.org/abs/2403.00652}.

\bibitem{Ma}
A.~Munemasa, On nonsymmetric {$P$}- and {$Q$}-polynomial association schemes,
  \emph{J. Combin. Theory Ser. B} \textbf{51} (1991), 314--328,
  \doi{10.1016/0095-8956(91)90045-L},
  \url{https://doi.org/10.1016/0095-8956(91)90045-L}.

\bibitem{Ogr}
G.~R. Omidi, A spectral excess theorem for normal digraphs, \emph{J. Algebraic
  Combin.} \textbf{42} (2015), 537--554, \doi{10.1007/s10801-015-0590-5},
  \url{https://doi.org/10.1007/s10801-015-0590-5}.

\bibitem{SPm}
S.~Penji{\'c}, \emph{Algebraic characterizations of distance-regular graphs},
  Master's thesis, University of Sarajevo, 2013.

\bibitem{SP}
S.~Penji{\' c}, \emph{On the {T}erwilliger algebra of bipartite
  distance-regular graphs}, {U}niversity of {P}rimorska, 2019, thesis (Ph.D.)
  -- {U}niversity of {P}rimorska, {K}oper,
  \url{https://osebje.famnit.upr.si/~penjic/research/}.

\bibitem{Rp}
P.~Rowlinson, Linear algebra, in: \emph{Graph connections}, Oxford Univ. Press,
  New York, volume~5 of \emph{Oxford Lecture Ser. Math. Appl.}, pp. 86--99,
  1997.

\bibitem{ShT}
H.~Suzuki, Thin weakly distance-regular digraphs, \emph{J. Combin. Theory Ser.
  B} \textbf{92} (2004), 69--83, \doi{10.1016/j.jctb.2004.04.004},
  \url{https://doi.org/10.1016/j.jctb.2004.04.004}.

\bibitem{Tt}
T.~Takahashi, Distance-regular digraphs of girth {$6$}, \emph{Mem. Fac. Sci.
  Kyushu Univ. Ser. A} \textbf{45} (1991), 155--166,
  \doi{10.2206/kyushumfs.45.155},
  \url{https://doi.org/10.2206/kyushumfs.45.155}.

\bibitem{DKT}
E.~R. van Dam, J.~H. Koolen and H.~Tanaka, Distance-regular graphs,
  \emph{Electron. J. Combin.} \textbf{DS22} (2016), 156.

\bibitem{DO}
E.~R. van Dam and G.~R. Omidi, Directed strongly walk-regular graphs, \emph{J.
  Algebraic Combin.} \textbf{47} (2018), 623--639,
  \doi{10.1007/s10801-017-0789-8},
  \url{https://doi.org/10.1007/s10801-017-0789-8}.

\bibitem{Wk}
K.~Wang, Commutative weakly distance-regular digraphs of girth 2,
  \emph{European J. Combin.} \textbf{25} (2004), 363--375,
  \doi{10.1016/j.ejc.2003.09.009},
  \url{https://doi.org/10.1016/j.ejc.2003.09.009}.

\bibitem{Wki}
K.~Wang, A family of weakly distance-regular digraphs of girth 2,
  \emph{Discrete Math.} \textbf{306} (2006), 2811--2815,
  \doi{10.1016/j.disc.2006.05.030},
  \url{https://doi.org/10.1016/j.disc.2006.05.030}.

\bibitem{WF}
K.~Wang and Y.-q. Feng, Deza digraphs, \emph{European J. Combin.} \textbf{27}
  (2006), 995--1004, \doi{10.1016/j.ejc.2005.04.001},
  \url{https://doi.org/10.1016/j.ejc.2005.04.001}.

\bibitem{WS}
K.~Wang and H.~Suzuki, Weakly distance-regular digraphs, volume 264, pp.
  225--236, 2003, \doi{10.1016/S0012-365X(02)00562-9}, the 2000 $\rm Com^2MaC$
  Conference on Association Schemes, Codes and Designs (Pohang),
  \url{https://doi.org/10.1016/S0012-365X(02)00562-9}.

\bibitem{TtYl}
T.-T. Xia, Y.-Y. Tan, X.~Liang and J.~H. Koolen, On association schemes
  generated by a relation or an idempotent, \emph{Linear Algebra Appl.}
  \textbf{670} (2023), 1--18, \doi{10.1016/j.laa.2023.03.029},
  \url{https://doi.org/10.1016/j.laa.2023.03.029}.

\bibitem{Ym}
M.~Yamada, Distance-regular digraphs of girth {$4$} over an extension ring of
  {${\mathbb Z}/4{\mathbb Z}$}, \emph{Graphs Combin.} \textbf{6} (1990),
  381--394, \doi{10.1007/BF01787706}, \url{https://doi.org/10.1007/BF01787706}.

\bibitem{Yi}
Y.~Yang, B.~Lv and K.~Wang, Weakly distance-regular digraphs of valency three,
  {I}, \emph{Electron. J. Combin.} \textbf{23} (2016), Paper 2.12, 14,
  \doi{10.37236/5413}, \url{https://doi.org/10.37236/5413}.

\bibitem{Yii}
Y.~Yang, B.~Lv and K.~Wang, Weakly distance-regular digraphs of valency three,
  {II}, \emph{J. Combin. Theory Ser. A} \textbf{160} (2018), 288--315,
  \doi{10.1016/j.jcta.2018.07.001},
  \url{https://doi.org/10.1016/j.jcta.2018.07.001}.

\bibitem{Yqt}
Y.~Yang, B.~Lv and K.~Wang, Quasi-thin weakly distance-regular digraphs,
  \emph{J. Algebraic Combin.} \textbf{51} (2020), 19--50,
  \doi{10.1007/s10801-018-0866-7},
  \url{https://doi.org/10.1007/s10801-018-0866-7}.

\bibitem{YWt}
Y.~Yang and K.~Wang, Thick weakly distance-regular digraphs, \emph{Graphs
  Combin.} \textbf{38} (2022), Paper No. 37, 42,
  \doi{10.1007/s00373-021-02450-2},
  \url{https://doi.org/10.1007/s00373-021-02450-2}.

\bibitem{ZYW}
Q.~Zeng, Y.~Yang and K.~Wang, {$P$}-polynomial weakly distance-regular
  digraphs, \emph{Electron. J. Combin.} \textbf{30} (2023), Paper No. 3.3, 13,
  \doi{10.37236/11798}, \url{https://doi.org/10.37236/11798}.

\end{thebibliography}
}



\end{document}